\newtheorem*{hof*}{Hoffman's Bound}
\newtheorem{theorem}{Theorem}
\newtheorem{lemma}[theorem]{Lemma}
\newtheorem{corollary}[theorem]{Corollary}
\newtheorem{proposition}[theorem]{Proposition}
\theoremstyle{definition}
\newtheorem{definition}[theorem]{Definition}
\newtheorem{conjecture}[theorem]{Conjecture}
\theoremstyle{remark}
\newcommand{\hgq}[4]{
{}_{2}{\mathbb F}_{1} \left[
\begin{matrix}
#1 & #2\smallskip \\
   & #3
\end{matrix}
 ;\mbox{ }#4
\right]
}
\newcommand{\hgthree}[6]{
\,_{3}{\mathbb F}_{2} \left[
\begin{matrix}
#1 & #2 & #3 \\
   & #4 & #5 \\
\end{matrix}
\, ;\;\;\; #6
\right]
}
\newcommand*\HYPER{&}
\newcommand*\pFFq{
\begingroup
\catcode`\,\active
\def ,{\HYPER}%
\doHyperF
}
\def\doHyperF#1#2#3#4#5{%
\, _{#1}{\mathbb F}_{#2}\left[\begin{matrix}#3 \smallskip \\  #4\end{matrix} \; ; \; #5\right]%
\endgroup
}
\newcommand*\HYPERskip{&}
\newcommand*\pFq{
\begingroup
\catcode`\,\active
\def ,{\HYPERskip}%
\doHyper
}
\def\doHyper#1#2#3#4#5{%
\, _{#1}F_{#2}\left[\begin{matrix}#3 \smallskip \\  #4\end{matrix} \; ; \; #5\right]%
\endgroup
}
\def\C{\mathbb{C}}
\def\Q{\mathbb{Q}}
\def\F{\mathbb{F}}
\def\bk{\color{black}}
\newcommand\balpha{{\boldsymbol\alpha}}
\newcommand\bbeta{{\boldsymbol\beta}}
\begin{document}

\title[Intersecting Families of $PSL(2,q)$]{Characterization of intersecting families of maximum size in $PSL(2,q)$}

\author[Long, Plaza, Sin, and Xiang]{Ling Long$^*$, Rafael Plaza, Peter Sin$^{\dagger}$, Qing Xiang$^{\ddagger}$}

\thanks{$^*$Research partially supported by  NSF grants DMS-1303292 and DMS-1602047}
\thanks{$^{\dagger}$Research partially supported by a grant from the Simons Foundation (\#204181 to Peter Sin).}
\thanks{$^{\ddagger}$Research partially supported by an NSF grant DMS-1600850}

\address{Ling Long, Department of Mathematics, Louisiana State University, Baton Rouge, LA 70803, USA}
\email{llong@lsu.edu}

\address{Rafael Plaza, Department of Mathematical Sciences, University of Delaware, Newark, DE 19716, USA}
\email{plaza@udel.edu}

\address{Peter Sin, Department of Mathematics, University of Florida, Gainesville, FL 32611, USA} \email{sin@ufl.edu}

\address{Qing Xiang, Department of Mathematical Sciences, University of Delaware, Newark, DE 19716, USA} \email{qxiang@udel.edu}

\keywords{Character table, Erd{\H o}s-Ko-Rado theorem, Hypergeometric function over finite field, Intersecting family, Legendre sum, Soto-Andrade sum}

\begin{abstract}

We consider the action of the $2$-dimensional projective special linear group $PSL(2,q)$ on the projective line $PG(1,q)$ over the finite field $\F_q$, where $q$ is an odd prime power.  A subset $S$ of $PSL(2,q)$ is said to be an intersecting family if for any $g_1,g_2 \in S$, there exists an element $x\in PG(1,q)$ such that $x^{g_1}= x^{g_2}$.  It is known that the maximum size of an intersecting family in $PSL(2,q)$ is $q(q-1)/2$.  We prove that all intersecting families of maximum size are cosets of point stabilizers for all odd prime powers $q>3$. 
\end{abstract}


\maketitle

\section{Introduction}

Let $n, k$ be positive integers such that $k\leq n$ and let $[n]=\{1,2,\ldots, n\}$.   A family of $k$-subsets of $[n]$ is said to be \emph{intersecting} if  the intersection of any two $k$-subsets in the family is non-empty.  The Erd{\H o}s-Ko-Rado (EKR) theorem is a classical result in extremal set theory. It states that when $k<n/2$ any intersecting family  of $k$-subsets has size at most ${n-1 \choose k-1}$; equality holds if and only if the family consists of all $k$-subsets of $[n]$ containing a fixed element of $[n]$ (cf. \cite{ekr}). In this paper, we focus on EKR type problems for permutation groups. In particular,  for any odd prime power $q$, we consider the natural right action of $PSL(2,q)$, the 2-dimensional projective special linear group over the finite field $\F_q$, on the set of points of $PG(1,q)$, the projective line over $\F_q$. 

Let $X$ be a finite set and $G$ a finite group acting on $X$. A subset $S$ of $G$ is said to be an {\it intersecting family} if for any $g_1,g_2 \in S$ there exists an element $x \in X$ such that $x^{g_1}= x^{g_2}$, i.e., $g_1g_2^{-1}$ stablizes some $x\in X$. In the context of  EKR-type theorems, the following problems about intersecting families in $G$  are  of interest:

\begin{enumerate}[I]
\item (Upper Bound) What is the maximum size of an intersecting family?
\item (Characterization) What is the structure of intersecting families of maximum size?
\end{enumerate}

Extensive research has been done  to solve the above problems for different groups.  In 1977, Deza and Frankl \cite{Frankl1} solved Problem I for the symmetric group $S_n$ acting on $[n]$. They proved that any intersecting family of $S_n$ has size at most $(n-1)!$. In fact,  this upper bound is tight because any coset of a point stabilizer in $S_n$ is an intersecting family of size precisely $(n-1)!$. They conjectured these sets are the only intersecting families of size $(n-1)!$.  This conjecture was proved to be true, independently, by Cameron and Ku \cite{Cameron1} and Larose and Malvenuto \cite{Larose1}.

In \cite{Karen1}, Meagher and Spiga studied Problem I and II for the group $PGL(2,q)$ acting on the set of points of the projective line $PG(1,q)$. These authors proved that the maximum size of an intersecting family in $PGL(2,q)$ is $q(q-1)$. Furthermore, they also solved the characterization problem: Every intersecting family of maximum size in $PGL(2,q)$ is a coset of a point stabilizer.  In \cite{Karen2}, they went one step further to solve Problem I and II for the group $PGL(3,q)$ acting on the set of points of the projective plane $PG(2,q)$. 

In this paper we study Problem II for the group $PSL(2,q)$ acting on $PG(1,q)$, where $q$ is an odd prime power. Here we only consider the $q$ odd case since if $q$ is a power of two, we have $PSL(2,q)=PGL(2,q)$, and both Problem I and II were solved in \cite{Karen1}.  It is known, from the combined results of \cite{Karen3, Karen1},  that the maximum size of an intersecting family in $PSL(2,q)$ is $q(q-1)/2$.  (In fact, in a recent paper \cite{mst}, it is proved that if $G\leq S_n$ is a 2-transitive group, then the maximum size of an intersecting family in $G$ is $|G|/n$. That is, the maximum size of an intersecting family is the cardinality of a point stabilizer.) However, it is only a conjecture that all intersecting families of maximum size are cosets of point stabilizers when $q>3$. (See the second part of Conjecture 1 in \cite{Karen1}.) In this paper, we prove that the second part of Conjecture 1 in \cite{Karen1} is true for all odd prime powers $q>3$. \bk

\begin{theorem}\label{psl_teo1}
Let $S$ be an intersecting family in $PSL(2,q)$ of maximum size, where $q>3$ is an odd prime power. Then  $S$ is a coset of a point stabilizer.  
\end{theorem}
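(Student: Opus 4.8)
The plan is to recast intersecting families as cocliques in a Cayley graph and then push the ratio (Hoffman) bound to its equality case. Let $\Gamma$ be the \emph{derangement graph} of $PSL(2,q)$ on $PG(1,q)$: the vertices are the elements of $PSL(2,q)$, and $g\sim h$ exactly when $gh^{-1}$ fixes no point of $PG(1,q)$. A subset of $PSL(2,q)$ is an intersecting family if and only if it is a coclique of $\Gamma$; the connection set $D$ is the (conjugation-closed) set of non-split semisimple elements, so $\Gamma$ is a normal Cayley graph whose eigenvalues are $\lambda_\chi=\frac1{\chi(1)}\sum_{d\in D}\chi(d)$ over the irreducible characters $\chi$ of $PSL(2,q)$, with $\lambda_\chi$ occurring with multiplicity $\chi(1)^2$. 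Since the action on $PG(1,q)$ is $2$-transitive, the permutation module is $\mathbf 1\oplus W$ with $W$ irreducible of degree $q$ and character $\chi_W(g)=\operatorname{fix}(g)-1$; hence $\chi_W\equiv -1$ on $D$, so $\lambda_W=-|D|/q$. Once one knows that $-|D|/q$ is the \emph{least} eigenvalue of $\Gamma$ and that it is attained \emph{only} on $W$, the ratio bound gives $\alpha(\Gamma)\le |PSL(2,q)|/(q+1)=q(q-1)/2$ (recovering the known value, see \cite{Karen3,Karen1,mst}), and its equality case yields that the characteristic vector $v_S$ of a maximum intersecting family $S$ satisfies $v_S-\frac1{q+1}\mathbf 1\in M_W$, the $W$-isotypic component of the regular module.

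So the first and central task is to prove $\lambda_\chi>-|D|/q$ for every irreducible $\chi\ne\mathbf 1,\chi_W$. I would go through the character table of $SL(2,q)$ and its descent to $PSL(2,q)$ family by family. For the principal series characters (degree $q+1$) the values on $D$ vanish, and for the discrete series characters (degree $q-1$) orthogonality of the characters of the non-split torus collapses $\sum_{d\in D}\chi(d)$ to a small explicit constant; in both cases $\lambda_\chi$ is visibly far above $-|D|/q$. The delicate case is the pair of exceptional characters, of degree $(q+1)/2$ when $q\equiv 1\pmod 4$ and of degree $(q-1)/2$ when $q\equiv 3\pmod 4$: their values on the non-split semisimple classes are quadratic character sums (Soto-Andrade sums), so $\sum_{d\in D}\chi(d)$ becomes a Legendre-type sum that must be shown not to be too negative. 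Here I would rewrite these sums in terms of Gaussian hypergeometric functions ${}_{2}F_{1}$ over $\F_q$, evaluate or estimate them using the corresponding finite-field hypergeometric identities and Weil-type bounds, and treat the smallest $q$ by direct computation. I expect this to be the main obstacle: it is where the new technical machinery (Soto-Andrade sums and finite-field hypergeometric functions) is needed, the relevant sums are subtle, and their size and sign depend on $q\bmod 4$.

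Granting that $-|D|/q$ is the least eigenvalue and is attained only on $W$, the remaining argument follows the pattern of Meagher and Spiga's treatment of $PGL(2,q)$ in \cite{Karen1}. One first checks that the characteristic vectors of the canonical families $A_{x,y}=\{g\in PSL(2,q):x^g=y\}$ — the cosets of point stabilizers — span exactly $\mathbb C\mathbf 1\oplus M_W$: for each fixed $x$ the right cosets of $G_x$ realize the copy of $W$ inside $\mathbb C[PG(1,q)]$, and a short module computation (using that $W^{G_x}$ is one-dimensional and $W$ is irreducible) shows that these $q+1$ copies together exhaust $M_W$. Hence $v_S=\sum_{x,y}c_{x,y}v_{A_{x,y}}$, so $v_S(g)=\sum_{x\in PG(1,q)}c_{x,\,x^g}$ for every $g$. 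The requirement that this be $0$ or $1$ for all $g\in PSL(2,q)$, together with $|S|=q(q-1)/2$ and the fact that the support of $v_S$ is an intersecting family, must be pushed to conclude that the matrix $(c_{x,y})$ has a single nonzero entry, equal to $1$; that is, $S=A_{x_0,y_0}$ is a coset of a point stabilizer. It is exactly at this final step that the hypothesis $q>3$ is used: for $q=3$, where $PSL(2,3)\cong A_4$, the group is too small and the conclusion fails — for instance $\{e,(1\,2\,3),(2\,3\,4)\}$ is a maximum intersecting family in $A_4$ that is not a coset of a point stabilizer.
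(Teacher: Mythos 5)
Your first two paragraphs reproduce results that are already known and are taken as input in the paper: the ratio-bound value $q(q-1)/2$ and the fact that the characteristic vector of a maximum family lies in $\mathbb C\mathbf 1\oplus M_W$ follow from \cite{Karen3,Karen1,mst}, and establishing the least eigenvalue of the derangement graph of $PSL(2,q)$ does not require Soto--Andrade sums or finite-field hypergeometric machinery. The genuine difficulty of the theorem sits precisely where you write that the Boolean condition on $v_S(g)=\sum_x c_{x,x^g}$, together with $|S|=q(q-1)/2$ and the intersecting property, ``must be pushed to conclude that the matrix $(c_{x,y})$ has a single nonzero entry.'' That push is not a short module computation: it is exactly what the Module Method of Ahmadi and Meagher formalizes, and it requires as an additional, independent input that the derangement matrix $M$ of $PSL(2,q)$ acting on $PG(1,q)$ (rows indexed by derangements, columns by ordered pairs of distinct points) has the maximal possible rank $(|X|-1)(|X|-2)=q(q-1)$. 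Without this rank statement, a $0/1$ vector of weight $q(q-1)/2$ in the span of the $v_{A_{x,y}}$ need not be one of the $v_{A_{x,y}}$, and your argument gives no mechanism to exclude other solutions; note that the failure at $q=3$ is not caused by the final ``push'' being tight for small groups, but by the rank dropping there.

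Proving $\mathrm{rank}(M)=q(q-1)$ is the entire content of the paper's Theorem \ref{psl_teo2}, and it is where the heavy analytic machinery you allocate to the eigenvalue computation is actually needed. Two points are essential and absent from your plan. First, one cannot run the argument with $PSL(2,q)$-modules alone: the permutation module on ordered pairs has irreducible $PSL(2,q)$-constituents of high multiplicity, so Schur's lemma gives no control; the paper circumvents this by observing that $N=M^{\top}M$ commutes with the larger group $PGL(2,q)$, for which the pair module is almost multiplicity-free (Lemma \ref{lemma2}). Second, showing that $T_N$ is nonzero on each constituent $V_{\psi_{-1}}$, $V_{\nu_\gamma}$, $V_{\eta_\beta}$ reduces to the nonvanishing of sums such as $\sum_{b} P_{\gamma}(2b^h-1)P_{\phi}(2b-1)$, and for $\gamma$ of order $2,3,4,6$ these are ${}_{4}\F_{3}(1)$ values whose estimation rests on the recent result of \cite{LTYW} (Proposition \ref{prop:15}); an orthogonality/Weil-type estimate of the kind you invoke for the exceptional characters does not suffice here. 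So the proposal, while aligned in spirit with the ratio-bound/module framework, leaves the decisive step unproved and misidentifies where the new technical work must occur.
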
 

Note that when $q=3$, we have $PSL(2,q)\cong A_4$, and the action of $PSL(2,q)$ on the projective line $PG(1,q)$ is equivalent to the (natural) action of $A_4$ on $\{1,2,3,4\}$; in this case, it was pointed out in \cite{kuw} that the set $S=\{(1), (123), (234)\}$ (we are using cycle notation for permutations), is an intersecting family of maximum size in $A_4$, but $S$ is not a coset of any point stablizer. To prove Theorem \ref{psl_teo1} we apply a general method for solving Problem II for some $2$-transitive groups. This technique was described by Ahmadi and Meagher in \cite{Karen3} and they called it ``The Module Method''. This method reduces the characterization of intersecting families of maximum size to the computation of the $\mathbb{C}$-rank of a matrix which we define below.

\begin{definition}
Let $X$ be a finite set and $G$ a finite group acting on $X$.  An element $g\in G$ is said to be a \emph{derangement} if its action on $X$ is fixed-point-free. The {\it derangement matrix} of $G$ acting on $X$ is the $(0,1)$-matrix $M$, whose rows are indexed by the derangements of $G$, whose columns are indexed by the ordered pairs of distinct elements in $X$, and for any derangement $g \in G$ and $(a,b) \in X\times X$ with $a \neq b$, the $(g, (a,b))$-entry of $M$ is defined by
\[
M(g, (a,b)) = \left\lbrace \begin{array}{ll}
1, & \mbox{ if }a^g=b,\\
0, & \mbox{otherwise.}
\end{array} \right.
\] 
\end{definition}

The Module Method states that, under certain conditions, if the rank of the derangement matrix $M$ of $G$ acting on $X$ is equal to $(|X|-1)(|X|-2)$, then the cosets of point stabilizers are the only intersecting families of maximum size in $G$. This technique has been applied to show that the cosets of point stabilizers are the only intersecting families of maximum size for the symmetric group \cite{Karen4}, the alternating group \cite{Karen5}, $PGL(2,q)$ \cite{Karen1}, and many other groups \cite{Karen3}. 

Thus, in order to prove Theorem \ref{psl_teo1} by applying the Module Method, it is
enough to show that the rank of the derangement matrix $M$ of $PSL(2,q)$ acting on $PG(1,q)$ is equal to $q(q-1)$. Therefore, Theorem \ref{psl_teo1} follows directly from the next theorem.

\begin{theorem}\label{psl_teo2}
Let $M$ be the derangement matrix of $PSL(2,q)$ acting on $PG(1, q)$, where $q>3$ is an odd prime power. Then the $\mathbb{C}$-rank of $M$ is $q(q -1)$. 
\end{theorem}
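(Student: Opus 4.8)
The plan is to compute the rank of $M$ via the permutation-module structure of $PSL(2,q)$ acting on ordered pairs of distinct points of $PG(1,q)$. Let $\Omega = PG(1,q)$, let $N = q(q-1)/2 = |PSL(2,q)|/(q+1)$ be the number of derangements... actually the key spaces are $V = \mathbb{C}^{\Omega^{(2)}}$, the permutation module on ordered pairs of distinct points (dimension $(q+1)q$), and $W = \mathbb{C}^{D}$, the free module on the set $D$ of derangements. The matrix $M$ is a $G$-homomorphism $W \to V$ (with $G = PSL(2,q)$ acting by conjugation on $D$ and diagonally on $\Omega^{(2)}$), so its image is a $G$-submodule of $V$, and to pin down $\operatorname{rank} M = \dim(\operatorname{im} M)$ it suffices to decompose $V$ into irreducibles and determine, for each irreducible constituent $U$ of $V$, whether $U \subseteq \operatorname{im} M$. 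Since $G$ acts $2$-transitively on $\Omega$, one knows $\mathbb{C}^{\Omega} = \mathbf{1} \oplus \mathrm{St}'$ for an irreducible $\mathrm{St}'$ of degree $q$, and $V = \mathbb{C}^{\Omega^{(2)}}$ decomposes according to the orbits of $G$ on $\Omega^{(2)} \times \Omega^{(2)}$ (equivalently, via the two-point stabilizer, which in $PSL(2,q)$ is cyclic of order $(q-1)/2$). So the first step is to write down $V = \bigoplus_i U_i$ explicitly using the character table of $PSL(2,q)$; the multiplicities are inner products of the permutation character on $\Omega^{(2)}$ (which is $\mathrm{Ind}_H^G \mathbf{1}$, $H$ the two-point stabilizer) with the irreducibles. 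The target is to show every constituent except the trivial one (which has multiplicity $1$ in $V$) lies in $\operatorname{im} M$, giving $\operatorname{rank} M = (q+1)q - 1 = q^2+q-1$; but Theorem \ref{psl_teo2} asserts the rank is $q(q-1) = q^2 - q$, so in fact $\operatorname{im} M$ must miss more — precisely the constituents that also appear in the "two-point" part coming from $\mathbb{C}^{\Omega} \otimes$ trivial directions. Concretely, $\mathbb{C}^{\Omega^{(2)}}$ surjects onto $\mathbb{C}^{\Omega}$ (forget the second coordinate) and onto $\mathbb{C}^{\Omega}$ (forget the first), and these two copies of $\mathbb{C}^\Omega$ together with the trivial module account for a submodule that I expect to be exactly the complement of $\operatorname{im} M$: $(q+1)q - 1 - 2q + 1 = q^2 - q$ would match if the kernel of $M^{T}$ (the "co-kernel" directions) is spanned by the all-derangements incidence relations $\sum_{b}(a,b)$ and $\sum_a (a,b)$, i.e. by the fact that every derangement sends each point somewhere and has each point as an image. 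So the clean statement to aim for is: $\operatorname{im} M = \{v \in \mathbb{C}^{\Omega^{(2)}} : \sum_{b \neq a} v(a,b) = 0 \text{ and } \sum_{a \neq b} v(a,b) = 0 \text{ for all } a,b\}$, a subspace of dimension $q(q-1)$, and the whole problem is to prove $M$ maps onto it.

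The mechanism for proving surjectivity onto that subspace is the standard Fourier/character-theoretic one: $M M^{T}$ is a $G$-equivariant endomorphism of $\mathbb{C}^{\Omega^{(2)}}$, hence acts as a scalar $\lambda_U$ on each irreducible constituent $U$ (constituents are multiplicity-free here, or one diagonalizes within isotypic blocks), and $\operatorname{rank} M = \operatorname{rank} M M^T = \sum_{U : \lambda_U \neq 0} \dim U$. The entries of $MM^{T}$ are, up to normalization, counts of the form $\#\{g \in D : a^g = b,\ c^g = d\}$ for pairs $(a,b),(c,d)$, which are computable from the group structure: this is where the orbit structure of $G$ on $\Omega^{(2)} \times \Omega^{(2)}$ enters, and the matrix $MM^T$ becomes a linear combination of the association-scheme (orbital) basis matrices of that action, whose eigenvalues are given by the character table evaluated at derangement classes. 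So the second main step is: identify the orbitals of $PSL(2,q)$ on pairs-of-pairs, express $MM^T$ in the orbital basis, and compute its eigenvalues $\lambda_U$ as explicit character sums over the derangement conjugacy classes of $PSL(2,q)$ (the derangements are exactly the elements of order dividing $(q+1)/\gcd$ coming from the non-split torus, together with the elements of order $p$... wait, no — a derangement on $PG(1,q)$ is an element with no fixed point on the projective line, which for $PSL(2,q)$ means the element is conjugate into the non-split (anisotropic) torus of order $(q+1)/\gcd(2,q-1)$, excluding the identity; the unipotents and the split-torus elements both fix points). Thus the $\lambda_U$ are sums of character values $\chi_U(c)$ over the anisotropic-torus classes $c$, weighted by class sizes — these are precisely the "Soto-Andrade sums" / finite-field hypergeometric expressions advertised in the keywords.

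The main obstacle, and the heart of the paper, will be step three: showing that $\lambda_U \neq 0$ for every irreducible $U$ of $PSL(2,q)$ except the trivial one and the two that fail to occur in $\operatorname{im}M$ by the structural reason above — i.e. evaluating, and proving non-vanishing of, these character sums over the non-split torus. For the principal-series and the two "half-discrete-series" characters of $PSL(2,q)$ the sums should reduce to classical Legendre/Jacobi-sum evaluations that are manifestly nonzero; the genuinely hard constituents are the discrete-series characters, where $\lambda_U$ becomes a nontrivial sum of the form $\sum_{t} \psi(\text{torus})$ that one rewrites as a value of a ${}_2\mathbb{F}_1$ hypergeometric function over $\mathbb{F}_q$ (a Soto-Andrade sum). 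I would then invoke known evaluations of such sums — transformation and summation formulas for finite-field hypergeometric functions (Greene's, and work of Evans, Katz, etc.) — to show they are nonzero, possibly with finitely many small $q$ checked by hand; the restriction $q > 3$ is exactly the exceptional case where one of these sums vanishes (consistent with the $A_4$ counterexample noted in the text). A subtlety throughout is the two-fold issue of passing between $GL(2,q)$, $PGL(2,q)$, and $PSL(2,q)$: the character table and conjugacy-class structure of $PSL(2,q)$ depend on $q \bmod 4$ (the split and non-split tori have order $(q-1)/2$ and $(q+1)/2$, and some $PGL$-classes fuse or split), so the character-sum computations genuinely bifurcate into the $q \equiv 1$ and $q \equiv 3 \pmod 4$ cases, and both must be carried out. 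Once all $\lambda_U$ are shown nonzero except for the expected two-dimensional-worth-plus-trivial deficiency, summing $\dim U$ over the surviving constituents gives $\operatorname{rank} M = q(q-1)$, completing the proof.
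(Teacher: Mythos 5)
Your overall architecture (study $N=M^{\top}M$ as an equivariant endomorphism of the permutation module on ordered pairs, show the two ``forgetful'' copies of the standard module account for a $2q$-dimensional kernel, and prove the remaining constituents survive by showing certain character sums, ultimately Legendre/Soto--Andrade and finite-field hypergeometric sums, are nonzero) is the same strategy as the paper's. But there is a genuine gap at the very step that makes the mechanism run: you work with $G=PSL(2,q)$-equivariance and assert that $MM^{\top}$ ``acts as a scalar $\lambda_U$ on each irreducible constituent (constituents are multiplicity-free here, or one diagonalizes within isotypic blocks).'' The permutation module $\mathbb{C}^{\Omega^{(2)}}$ is \emph{not} multiplicity-free as a $PSL(2,q)$-module (its constituents occur with multiplicity greater than one, which is exactly why the $PGL(2,q)$ argument of Meagher--Spiga does not transfer verbatim), so the centralizer algebra is noncommutative, the orbital basis matrices cannot be simultaneously diagonalized, and on an isotypic block of multiplicity $m$ the endomorphism is an $m\times m$ matrix whose rank is not decided by a single character-sum computation; ``diagonalize within isotypic blocks'' is precisely the difficulty, not a remedy. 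The paper's essential idea, which your proposal is missing, is that $N$ is invariant under the larger group $PGL(2,q)$ (the derangement set of $PSL(2,q)$ is stable under $PGL(2,q)$-conjugation since $PSL(2,q)$ is normal), and as a $PGL(2,q)$-module $V\cong V_{\lambda_1}\oplus 2V_{\psi_1}\oplus V_{\psi_{-1}}\oplus\bigoplus_\beta V_{\eta_\beta}\oplus\bigoplus_\gamma V_{\nu_\gamma}$ is multiplicity-free except for $2V_{\psi_1}$, which is shown to lie entirely in $\ker N$; only then does Schur's lemma reduce the rank computation to nonvanishing of one scalar $T_{N,\chi}$ per remaining character.

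Two smaller points. First, your proposed description of the image, $\{v:\sum_b v(a,b)=0,\ \sum_a v(a,b)=0\}$, has dimension $q^2-q-1$, not $q(q-1)$: the constant vector lies in the column space of $M^{\top}$ (each coordinate of $M^{\top}\mathbf{1}$ equals $(q-1)^2/4$), so the trivial constituent is in the image and your ``clean statement'' excludes it. Second, the nonvanishing step is substantially harder than ``classical Legendre/Jacobi-sum evaluations that are manifestly nonzero'': in the paper the sums for $\eta_\beta$ and for most $\nu_\gamma$ are handled by an orthogonality/norm argument in $\ell^2(\mathbb{F}_q,m)$ together with a Galois-conjugation trick, while the cases $\psi_{-1}$ and $\nu_\gamma$ with $\gamma$ of order $3,4,6$ require a bound of the form $\bigl|q^3\,{}_4\mathbb{F}_3(1)+\phi(-1)\gamma(-1)q\bigr|\le 2q^{3/2}$ coming from recent deep results on hypergeometric motives, plus computer verification for $3<q<11$; your proposal correctly anticipates that finite-field hypergeometric input is needed but underestimates what must be proved there.
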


Exactly the same statement for $PGL(2,q)$ is proved in \cite[Prop. 9]{Karen1}, 
so we must first  examine why the proof does not immediately carry over to
$PSL(2,q)$. In \cite{Karen1} the matrix $M^{\top}M$ represents a certain $PGL(2,q)$-module
endomorphism of a permutation module.  The main calculation is to show,
for each irreducible constituent character of this module, that 
the image of $M^{\top}M$ is not annihilated by the corresponding central idempotent.
Consequently, the image also contains the character as a constituent,
and the rank result follows due to the fact that the module in question is almost multiplicity-free, in the sense that, with one exception, each irreducible constituent character occurs with multiplicity one. If one attempts to follow the same
procedure for $PSL(2,q)$ one runs immediately into the problem that the 
$PSL(2,q)$-constituents of the permutation module have high  multiplicity. 
Fortunately, this obstacle can be sidestepped by observing that although
we are working in $PSL(2,q)$, our sets and permutation modules admit the action
of $PGL(2,q)$, and for the larger group the permutation module has the
property of being almost multiplicity-free. A more serious
difficulty arises when one attempts to show that the central idempotents
have nonzero images in the permutation module. As for $PGL(2,q)$, the problem
boils down to showing that certain sums of character values are not zero.
For $PGL(2,q)$, these sums could be estimated by elementary arguments.
However, the sums for $PSL(2,q)$ appear to be much harder to deal with, and
our proof proceeds by reformulating the sums
as character sums over finite fields and applying some deep results
on hypergeometric functions over finite fields. 
The finite field character sums which appear are Legendre and Soto-Andrade sums (see Section \ref{psl_ls_sums}). \bk This is not a surprise; it is well known that these sums appear in connection with the complex representation theory of $PGL(2,q)$ \cite{Kable}. To prove that these character sums are not equal to zero the following facts will be crucial:
\begin{enumerate}
	\item The Legendre and Soto-Andrade sums  (see Definitions \ref{def1} and \ref{def1_1})  on $\mathbb{F}_q$ form an orthogonal basis in the inner product space $\ell_2(\mathbb{F}_q,m)$ \cite{Kable}, where $m$ is the measure assigning mass $q+1$ to the points $\pm1$ and mass $1$ to all other points.
	\item The Legendre sums may be expressed in terms of hypergeometric functions over finite fields (see Section \ref{psl_hyp_sum}). These functions were introduced by Greene in \cite{Greene1}  and Katz in \cite{Katz1}  and since that time they have been extensively studied \cite{Ono1, LLong, Kable}.
\end{enumerate}

The rest of this paper is organized as follows. In Section 2,  we provide some  basic results about the character table of $PGL(2,q)$, Legendre and Soto-Andrade sums, and hypergeometric functions over finite fields.  In Section 3, we show that the rank of the derangement matrix $M$ is equal to the dimension of the image of a $PGL(2,q)$-module homomorphism. We use this fact to reduce the problem of computing the rank of $M$ to  that of showing  some explicit  character sums over $PGL(2,q)$ are not equal to zero.  In Section $4$, we find some formulas to express those character sums over $PGL(2, q)$ in terms of Legendre and Soto-Andrade sums. In Section 5, we prove Theorem  \ref{psl_teo2}. In Section 6, we conclude with some remarks and open problems. 

\section{Background}
We start by recalling standard facts about the
groups $PGL(2,q)$ and $PSL(2,q)$ and their complex characters, introducing our notation in the process. We shall assume that the reader is 
familiar with the general terminology and basic results 
from the representation theory of finite groups over the complex field, as can be found
in many textbooks, and we shall use \cite{Serre} for specific references when
necessary.
\subsection{The groups $PGL(2,q)$ and $PSL(2,q)$}
Let $\mathbb{F}_q$ be the finite field of size $q$ and $\mathbb{F}_{q^2}$ its unique quadratic extension. We denote by $\mathbb{F}_q^*$ and $\mathbb{F}_{q^2}^*$ the multiplicative groups of $\mathbb{F}_q$ and $\mathbb{F}_{q^2}$, respectively. 
Let $GL(2,q)$ be the group of all invertible $2 \times 2$ matrices over $\mathbb{F}_q$
and $SL(2,q)$ the subgroup of all invertible $2 \times 2$ matrices  with determinant $1$.
The center $Z(GL(2,q))$ of $GL(2,q)$ consists of all non-zero scalar matrices 
and we define $PGL(2,q)= GL(2,q)/Z(GL(2,q))$ and $PSL(2,q)=SL(2,q)/\left (SL(2,q)  \cap Z(GL(2,q))\right )$. If $q$ is odd then $PSL(2,q)$ is a subgroup of $PGL(2,q)$ of index $2$, while if $q$ is even then $PGL(2,q)= PSL(2,q)$.

We denote by $PG(1,q)$ the set of $1$-dimensional subspaces of the space
$\mathbb{F}_q^2$ of row vectors of length 2. Thus, $PG(1,q)$ is a projective line over $\mathbb{F}_q$ and its elements are called projective points. An easy computation shows that $PG(1,q)$ has cardinality $q+1$. From the above definitions, it is clear that the
$GL(2,q)$-action on $\mathbb{F}_q^2$ by right multiplication induces a natural right 
action of the groups $PGL(2,q)$  and $PSL(2,q)$ on  $PG(1,q)$. 
The action of the subgroup $PSL(2,q)$ is $2$-transitive, that is,
given any two ordered pairs of distinct points there is a group element sending  
the first pair to the second.  The  action of $PGL(2,q)$ is {\it sharply
$3$-transitive}, that is, given any two ordered triples of distinct points
there is a unique group element sending the first triple to the second. 
\bk

\subsection{The character table of $PGL(2,q)$}\label{psl_ct}

We assume in this section and throughout this paper that $q$ is an odd prime power. We briefly describe the character table of $PGL(2,q)$. We refer the reader to  \cite{Pia1} for a complete study of the complex irreducible characters of $PGL(2,q)$.  We start by describing its conjugacy classes. By abuse of notation we will denote the elements of $PGL(2,q)$ by  $2\times 2$ matrices with entries from $\mathbb{F}_q$. 

First note that, the elements of $PGL(2,q)$ can be collected into four sets: The set consisting of the identity element only; the set consisting of the non-scalar matrices with only one eigenvalue in $\mathbb{F}_q$; the set consisting of matrices with two distinct eigenvalues in $\mathbb{F}_q$; and the set of matrices with no eigenvalues in $\mathbb{F}_q$. Recall that the elements of $PGL(2,q)$ are projective linear transformations so if  $\{x_1,x_2\}$ are eigenvalues of some $g \in PGL(2,q)$ then $\{ax_1,ax_2\}$ are also eigenvalues of $g$ for any $a  \in \mathbb{F}_q^*$. Hence, the eigenvalues of elements in $PGL(2,q)$ are defined up to multiplication by elements of $\mathbb{F}_q^*$. 

The identity of $PGL(2,q)$, denoted by $I$, defines a conjugacy class of size $1$. Every non-identity element of $PGL(2,q)$ having only one eigenvalue in $\mathbb{F}_q^*$ is  conjugate  to
\[ 
u= \left(\begin{array}{cc}
 1 & 1\\
 0 & 1
\end{array}\right).
\]    
The conjugacy class of $u$ contains $q^2-1$ elements. The elements having two distinct eigenvalues in $\mathbb{F}_q$ are  conjugate  to
\[
d_x=\left(\begin{array}{cc}
 x & 0\\
 0 & 1
\end{array}\right)
\]
for some $x \in \mathbb{F}^*_q\setminus\{1\}$. Moreover, $d_x$ and $d_y$ are conjugate if and only if $x=y$ or $x=y^{-1}$. The size of the conjugacy class containing $d_x$ is $q(q+1)$  for  $x \in \mathbb{F}^*_q\setminus\{\pm 1\}$ and $q(q+1)/2$ for $x=-1$. Finally, the elements of $PGL(2,q)$ with no eigenvalues in $\mathbb{F}^*_q$ are  conjugate  to 
\[
v_r=\left(\begin{array}{cc}
 0 & 1\\
 -r ^{1+q} & r + r^q
\end{array}\right)
\] 
for some $r \in \mathbb{F}^*_{q^2} \setminus \mathbb{F}^*_q$. The matrices $v_r$ have eigenvalues $\{r, r^q\}$. Hence, $v_{r_1}$ and $v_{r_2}$ lie in the same conjugacy class if and only if  $r_1 \mathbb{F}^*_q = r_2 \mathbb{F}^*_q$ or $r_1 \mathbb{F}^*_q = r_2^{-1} \mathbb{F}^*_q$. The size of the conjugacy class containing $v_r$ is $q(q-1)$ if $r \in \mathbb{F}^*_{q^2} \setminus(\mathbb{F}^*_q \cup i\mathbb{F}^*_q)$ and $q(q-1)/2$ if $r \in i\mathbb{F}^*_q$, where $i$ is an element of $\mathbb{F}^*_{q^2}\setminus\mathbb{F}^*_q$ such that $i^2 \in \mathbb{F}^*_q$.

The complex irreducible characters of $PGL(2,q)$ are described in Table \ref{table1}. They also come in four families. First the characters $\lambda_1$ and $\lambda_{-1}$ correspond to representations of degree $1$. Here $\lambda_1$ is the principal character and the values of $\lambda_{-1}$ depend on a function  $\delta$ which is defined as follows: $\delta(x)=1$ if $d_x \in PSL(2,q)$ and $\delta(x)=-1$ otherwise, similarly, $\delta(r)=1$  if $v_r \in PSL(2,q)$ and $\delta(r)=-1$ otherwise.  

Secondly,  the characters $\psi_1$ and $\psi_{-1}$ correspond to representations of degree $q$. The character $\psi_1$ is the standard character which is an  irreducible character of $PGL(2,q)$. Thus,  for every $g  \in PGL(2,q)$,  the value of $\psi_{1}(g)$ is equal to the number of projective points fixed by $g$ in $PG(1,q)$ minus $1$. The values of $\psi_{-1}$ depend on the function $\delta$ defined above.

The third family is known as the cuspidal characters of $PGL(2,q)$. They correspond to representations of degree $q-1$ and their values depend on multiplicative characters of $\mathbb{F}_{q^2}$. In fact, the label $\beta$ in Table \ref{table1} runs through all homomorphism $\beta: \mathbb{F}_{q^2}^*/ \mathbb{F}_{q}^* \rightarrow \mathbb{C}^*$ of order greater than $2$ up to inversion. Note that every $\beta$ corresponds to a unique multiplicative character of $\mathbb{F}_{q^2}$ which is trivial on $\mathbb{F}_{q}^*$. 

Finally,  the fourth family of irreducible characters is known as the principal series of $PGL(2,q)$. These characters correspond to representations of degree $q+1$ and their values depend on multiplicative characters of $\mathbb{F}_{q}$. In fact, the label $\gamma$ in Table \ref{table1}  runs through all the homomorphism $\gamma : \mathbb{F}_{q}^* \rightarrow \mathbb{C}^*$ of order greater than 2 up to inversion.

Throughout this paper we denote by $\Gamma$ and $B$ a fixed selection of characters $\gamma$ and $\beta$, as defined above, up to inversion of size $(q-3)/2$ and $(q-1)/2$, respectively. Therefore, the principal series and cuspidal irreducible characters of $PGL(2,q)$ are given by $\{\nu_{\gamma}\}_{\gamma \in \Gamma}$ and $\{\eta_{\beta}\}_{\beta \in B}$, respectively.

 \begin{table}
 \caption{Character table of $PGL(2,q)$}\label{table1}
 \begin{center}
  \begin{tabular}{ | c | c | c | c | c | c | c | }
    \hline
                                 &   $I$      &  $u$      &  $d_{x}$                                         & $d_{-1}$            & $v_{r}$                          & $v_{i}$ \\ \hline
      $\lambda_1$       &     $1$    &   $1$     &  $1$                                              & $1$                    & $1$                              & $1$ \\ \hline
      $\lambda_{-1}$   &     $1$    &   $1$     &   $\delta(x)$                                  & $\delta(-1)$       & $\delta(r)$                    &  $\delta(i)$  \\ \hline
      $\psi_1$              &     $q$    &   $0$     &   $1$                                             & $1$                    & $-1$                             &  $-1$\\ \hline
      $\psi_{-1}$          &     $q$    &   $0$     &   $\delta(x)$                                  & $\delta(-1)$       & $-\delta(r)$                   &  $-\delta(i)$\\ \hline
      $\eta_{\beta}$     &   $q-1$   &  $-1$    &   $0$                                             & $0$                    & $-\beta(r)- \beta(r^q)$ & $-2\beta(i)$  \\ \hline
      $\nu_{\gamma}$  &  $q+1$  &   $1$     &   $\gamma(x) + \gamma(x^{-1})$ & $2\gamma(-1)$ & $0$                               & $0$ \\ \hline
  \end{tabular}
\end{center}
 \end{table}

\subsection{Hypergeometric functions over finite fields}\label{psl_hyp_sum}

 A (generalized) hypergeometric function  with parameters $a_i,b_j$ is defined by 
$$\pFq{n+1}{n}{a_1&a_2&\cdots&a_{n+1}}{&b_1&\cdots&b_n}{x}=\sum_{k\ge 1} \frac{(a_1)_k\cdots (a_{n+1})_k}{(b_1)_k\cdots(b_n)_k}\frac{x^k}{k!},$$ where $a_0=1$ and for $k\ge 1$, $(a)_k=a(a+1)\cdots(a+k-1)$ is called the Pochhammer symbol.

Hypergeometric functions over finite fields were introduced independently by John Greene \cite{Greene1} and Nicholas Katz \cite{Katz1}. Note that  the two definitions  differ only in  a normalizing factor  for cases  related to our discussion.  

In this section and throughout this paper we denote by $\epsilon$ and $\phi$ the trivial and quadratic multiplicative characters of $\F_q$, respectively.   Also throughout this paper \bk we adopt the convention of extending multiplicative characters by declaring them to be zero at $0\in \F_q$.  For any multiplicative character $\gamma$, we use $\overline \gamma$ to denote its complex conjugation.  A Gauss sum of $\gamma$ is defined by $g(\gamma):=\sum_{x\in\F_q}\gamma(x)\theta(x)$ where $\theta$ is any nontrivial additive character of $\F_q$. Let $\gamma_0, \gamma_1, \gamma_2 $ be multiplicative characters of $\F_q$ and $x \in \F_q$. Greene defines the following finite field analogue of a hypergeometric sum
\begin{equation}\label{ecu13}
\hgq{\gamma_0}{\gamma_1}{\gamma_2}{x;q}:= \epsilon(x)\frac{\gamma_1\gamma_2(-1)}{q} \sum_{y \in \mathbb{F}_q} \gamma_1(y)(\gamma_2\gamma_1^{-1}) (1-y) \gamma_{0}^{-1}(1-xy).
\end{equation}

Since the seminal work of Greene and Katz a lot of work has been done on special functions over finite fields, in particular generalized hypergeometric functions. In this section,  we recall some definitions and results that we will use later in this paper.

 Following  Greene \cite{Greene1}, we introduce other $_{n+1}\F_n$ functions inductively as follows. For  multiplicative characters $A_0,A_1, \ldots, A_n$ and $B_1,\ldots, B_n$ of $\mathbb{F}_q$ and $x \in \mathbb{F}_q$,  define 
\begin{multline*}
\pFFq{n+1}{n}{A_0 & A_1 & \cdots & A_n}{ & B_1 & \cdots & B_n}{x ; q}
:= \\  \frac{A_nB_n(-1)}{q} \sum_{y \in \mathbb{F}_q}\mbox{}  
\pFFq{n}{n-1}{A_0 & A_1 & \cdots & A_{n-1}}{ & B_1 & \cdots & B_{n-1}}{x y  ; q}  A_n(y) \overline{A_n}B_n(1-y).
\end{multline*} 
See \S 4.4 of \cite{LLong} for a comparison among different versions of finite field hypergeometric functions.  

The following lemma is a generalization of Lemma 2.2 in \cite{Ono1}. 

\begin{lemma}\label{lemma15}
For any non-trivial multiplicative character $\gamma$  of $\mathbb{F}_q$,
\[
q\pFFq{4}{3}{\gamma & \gamma^{-1} & \phi & \phi}{ & \epsilon & \epsilon & \epsilon}{1 ; q}
=  \sum_{z \in \mathbb{F}_q} \phi(z) \hgq{\phi}{\phi}{\epsilon}{z;q} \hgq{\gamma}{\gamma^{-1}}{\epsilon}{z;q},
\] where $\phi(\cdot)$ denotes the quadratic character of $\F_q$.

\end{lemma}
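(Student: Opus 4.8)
The plan is to unwind both sides using the definition of Greene's hypergeometric function and compare the resulting multiple character sums directly. First I would write out the right-hand side: by \eqref{ecu13},
\[
\hgq{\phi}{\phi}{\epsilon}{z;q} = \epsilon(z)\frac{\phi(-1)}{q}\sum_{y}\phi(y)\phi(1-y)\epsilon(1-zy),
\]
and similarly
\[
\hgq{\gamma}{\gamma^{-1}}{\epsilon}{z;q} = \epsilon(z)\frac{\gamma^{-1}\gamma^{-1}(-1)}{q}\sum_{w}\gamma(w)(\gamma^{-1}\gamma^{-1})(1-w)\epsilon(1-zw) = \epsilon(z)\frac{1}{q}\sum_{w}\gamma(w)\gamma^{-2}(1-w),
\]
where one has to be slightly careful because the upper parameter $\epsilon$ makes $\gamma_0^{-1}(1-xy) = \epsilon(1-zw)$, which is $1$ unless $zw=1$. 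So the inner $w$-sum collapses after accounting for that constraint. Multiplying the two and summing $\sum_z \phi(z)(\cdots)$ gives a double (effectively triple, before collapsing) sum in $y, w, z$ over $\mathbb{F}_q$, with a leading constant $\phi(-1)/q^2$.

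Next I would expand the left-hand side. Using the inductive definition twice to peel off the last two parameters $\phi,\epsilon$ and $\phi,\epsilon$, the ${}_4\F_3$ reduces to a double sum of ${}_2\F_1$'s:
\[
q\,\pFFq{4}{3}{\gamma & \gamma^{-1} & \phi & \phi}{ & \epsilon & \epsilon & \epsilon}{1;q} = q\cdot\frac{\phi(-1)}{q}\cdot\frac{\phi(-1)}{q}\sum_{s,t}\hgq{\gamma}{\gamma^{-1}}{\epsilon}{st;q}\phi(t)\phi(1-t)\,\phi(s)\phi(1-s),
\]
and then expand the remaining ${}_2\F_1$ via \eqref{ecu13} as well. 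After relabelling the summation variables (setting $z$ to be the product of the peeled-off variables, or conversely writing $z$ as a new free variable and using a substitution to absorb one sum), the two sides should become identical term by term. The key manipulation is a change of variables in the $\mathbb{F}_q$-sums — typically $y\mapsto y/z$ or grouping $st=z$ — together with the Fourier-type identity $\sum_{z}\epsilon(z)\chi(1-az) = $ (something explicit) that handles the $\epsilon$ upper parameters; this is exactly the mechanism that was used to prove Lemma 2.2 of \cite{Ono1}, and the present statement is its natural generalization from $\gamma = \phi$ (or whatever special case appears there) to arbitrary non-trivial $\gamma$.

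The main obstacle I anticipate is bookkeeping of the normalizing constants and the support conditions coming from the convention that multiplicative characters vanish at $0$: each application of \eqref{ecu13} or of the inductive definition carries a factor of the form $A B(-1)/q$ and an $\epsilon(x)$ (here $\epsilon(1)=1$, which is harmless), and one must check that the powers of $q$ and the signs $\phi(-1)$ match on both sides — a miscount here is the easiest way to get a spurious constant. A secondary subtlety is that the ${}_2\F_1$ with upper parameter $\epsilon$, namely $\hgq{\phi}{\phi}{\epsilon}{z;q}$ and $\hgq{\gamma}{\gamma^{-1}}{\epsilon}{z;q}$, are not in the ``generic'' position, so one should either verify the reduction directly from the definition or cite the relevant evaluation/transformation; the case $z=0$ (where the $\epsilon(z)$ factor kills everything) and the case $z=1$ should be checked separately to be safe. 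Once the constants are pinned down, the identity is a formal consequence of rearranging finite sums, so no convergence or analytic input is needed — it is purely combinatorial.
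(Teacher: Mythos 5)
Your overall strategy (peel the ${}_4\F_3$ down via the inductive definition and then match character sums after a change of variables) is in the same spirit as the paper's proof, and your expansion of the left-hand side is correct. The genuine error is in your expansion of the two ${}_2\F_1$'s on the right-hand side: in Greene's definition \eqref{ecu13} the character evaluated at $1-xy$ is $\gamma_0^{-1}$, where $\gamma_0$ is the \emph{upper-left} parameter, while the $\epsilon$ appearing in $\hgq{\gamma}{\gamma^{-1}}{\epsilon}{z;q}$ and $\hgq{\phi}{\phi}{\epsilon}{z;q}$ is the \emph{lower} parameter $\gamma_2$ (which only enters through the prefactor $\gamma_1\gamma_2(-1)$ and the factor $(\gamma_2\gamma_1^{-1})(1-y)$). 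The correct expansions are
\[
\hgq{\phi}{\phi}{\epsilon}{z;q}=\epsilon(z)\frac{\phi(-1)}{q}\sum_{y\in\F_q}\phi(y)\phi(1-y)\phi(1-zy),
\qquad
\hgq{\gamma}{\gamma^{-1}}{\epsilon}{z;q}=\epsilon(z)\frac{\gamma(-1)}{q}\sum_{y\in\F_q}\gamma^{-1}(y)\gamma(1-y)\gamma^{-1}(1-zy),
\]
not the versions you wrote with $\epsilon(1-zw)$ and prefactor $\gamma^{-1}\gamma^{-1}(-1)$. Consequently your claim that ``the inner $w$-sum collapses'' is false; with your expressions both ${}_2\F_1$'s would be essentially independent of $z$, contradicting for instance Lemma \ref{lemma13}, which identifies them with genuinely varying Legendre sums. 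As written, the term-by-term comparison you propose cannot be carried out, so this is a real gap rather than a bookkeeping issue.

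The repair is exactly the paper's argument, and it needs no expansion of the right-hand side at all. After your (correct) two peelings one has
\[
q\,\pFFq{4}{3}{\gamma & \gamma^{-1} & \phi & \phi}{ & \epsilon & \epsilon & \epsilon}{1 ; q}
=\frac1q\sum_{s,t\in\F_q^*}\phi(s)\phi(1-s)\phi(t)\phi(1-t)\,\hgq{\gamma}{\gamma^{-1}}{\epsilon}{st;q}.
\]
Substitute $z=st$ (so $\phi(s)\phi(t)=\phi(z)$), then $w=1/s$ and use $\phi\bigl(1-\tfrac1w\bigr)=\phi(-1)\phi(w)\phi(1-w)$; the inner sum $\frac{\phi(-1)}{q}\sum_{w}\phi(w)\phi(1-w)\phi(1-zw)$ is then recognized, via \eqref{ecu13}, as exactly $\hgq{\phi}{\phi}{\epsilon}{z;q}$ for $z\neq0$, and the $z=0$ term is harmless because of the factor $\phi(z)$. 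So your plan is salvageable in outline, but the key computational step on which it relies is incorrect and must be replaced by the identification just described.
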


\begin{proof}
The lemma follows from the recursive definition of $_{n+1}\F_n$.  First,
\begin{eqnarray*}
q\pFFq{4}{3}{\gamma & \gamma^{-1} & \phi & \phi}{ & \epsilon & \epsilon & \epsilon}{1 ; q} & = & \phi(-1) \sum_{x \in \mathbb{F}_q^*} \phi(x)\phi(1-x) \hgthree{\gamma}{\gamma^{-1}}{\phi}{\epsilon}{\epsilon}{ x ;q} \\ 
& = & \frac{1}{q} \sum_{x \in \mathbb{F}_q^*}  \sum_{y \in \mathbb{F}_q^*} \phi(x) \phi(1-x) \phi(y) \phi(1-y) \hgq{\gamma}{\gamma^{-1}}{\epsilon}{xy;q}.
\end{eqnarray*}
Now replacing $xy$ by $z$, 
\begin{equation*}
q\pFFq{4}{3}{\gamma & \gamma^{-1} & \phi & \phi}{ & \epsilon & \epsilon & \epsilon}{1 ; q}   = \frac{1}{q} \sum_{x \in \mathbb{F}_q^*}  \sum_{z \in \mathbb{F}_q^*} \phi(1-x)  \phi(1-z/x) \phi(z)\hgq{\gamma}{\gamma^{-1}}{\epsilon}{z;q}.
\end{equation*}Letting $w=1/x$ and using \eqref{ecu13}
 we get,
\begin{eqnarray*}
q\pFFq{4}{3}{\gamma & \gamma^{-1} & \phi & \phi}{ & \epsilon & \epsilon & \epsilon}{1 ; q}  & = & \sum_{z \in \mathbb{F}_q^*} \frac{1}{q} \sum_{w \in \mathbb{F}_q^*} \phi(1 - \frac{1}{w}) \phi(1-zw) \phi(z) \hgq{\gamma}{\gamma^{-1}}{\epsilon}{z;q}  \\
& = & \sum_{z \in \mathbb{F}_q^*} \frac{1}{q} \sum_{w \in \mathbb{F}_q^*} \phi(-1)\phi(w)\phi(1-w) \phi(1-zw) \phi(z) \hgq{\gamma}{\gamma^{-1}}{\epsilon}{z;q}  \\
 & = &  \sum_{z \in \mathbb{F}_q}\phi(z) \hgq{\phi}{\phi}{\epsilon}{z;q} \hgq{\gamma}{\gamma^{-1}}{\epsilon}{z;q}. 
\end{eqnarray*}

\end{proof}

Like their classical counterparts hypergeometric functions over finite fields satisfy many transformation formulas \cite{LLong, Greene1}. In particular, the next one will be useful for our purpose.

\begin{lemma}\label{lemma12}
(Greene, \cite{Greene1}) For $x \in \mathbb{F}_q$ with $x \neq 0$ we have,
\[
\hgq{\phi}{\phi}{\epsilon}{x;q} = \phi(x) \hgq{\phi}{\phi}{\epsilon}{\frac{1}{x};q}. 
\]
\end{lemma}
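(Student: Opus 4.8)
\textbf{Proof proposal for Lemma \ref{lemma12}.}

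The plan is to prove this transformation directly from the defining formula \eqref{ecu13} by a single change of variables in the Legendre-type sum, exactly as Greene does in \cite{Greene1}, since the statement is an instance of the classical Euler/Pfaff-type transformation specialized to all parameters quadratic or trivial. Writing out the left-hand side using \eqref{ecu13} with $\gamma_0=\gamma_1=\phi$, $\gamma_2=\epsilon$, we get
\[
\hgq{\phi}{\phi}{\epsilon}{x;q} = \epsilon(x)\frac{\phi(-1)}{q}\sum_{y\in\F_q}\phi(y)\,\phi(1-y)\,\phi(1-xy),
\]
using that $\gamma_1\gamma_2(-1)=\phi(-1)$, that $\gamma_2\gamma_1^{-1}=\phi^{-1}=\phi$, and that $\gamma_0^{-1}=\phi^{-1}=\phi$. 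Since $x\neq 0$ we have $\epsilon(x)=1$, so the task reduces to showing that the sum $\sum_{y}\phi(y)\phi(1-y)\phi(1-xy)$ equals $\phi(x)$ times the same sum with $x$ replaced by $1/x$.

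For the key step I would substitute $y\mapsto y/x$ (a bijection of $\F_q^*$, and the term $y=0$ contributes $0$ on both sides since $\phi(0)=0$). Then $\phi(y)$ becomes $\phi(y/x)=\phi(y)\phi(x)$ (as $\phi=\phi^{-1}$), the factor $\phi(1-xy)$ becomes $\phi(1-y)$, and the factor $\phi(1-y)$ becomes $\phi(1-y/x)=\phi(x^{-1})\phi(x-y)=\phi(x)\phi(x-y)$. Collecting, the sum becomes $\phi(x)^2\phi(x)\sum_{y}\phi(y)\phi(1-y)\phi(x-y) = \phi(x)\sum_{y}\phi(y)\phi(1-y)\phi(x-y)$, using $\phi(x)^2=\epsilon(x)=1$. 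Finally, factoring $\phi(x-y)=\phi(x)\phi(1-y/x)$ and relabelling back, or more cleanly writing $\phi(x-y)=\phi(x)\phi(1-y x^{-1})$ and then substituting $y\mapsto xy$ again, one identifies $\sum_y \phi(y)\phi(1-y)\phi(x-y)$ with $\phi(-1)\cdot$ (the inner sum in $\hgq{\phi}{\phi}{\epsilon}{1/x;q}$) after matching the $\phi(-1)/q$ normalizations; I would track the $\phi(-1)$ factors carefully to confirm they cancel. Alternatively, and perhaps most transparently, one applies the substitution $y\mapsto 1-y$ to the symmetrized form $\sum_y \phi(y)\phi(1-y)\phi(1-xy)$ to exploit the evident symmetry between the roles of $0$ and $1$, which is the finite-field shadow of the classical $x\leftrightarrow 1/x$ symmetry of ${}_2F_1$.

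The main obstacle is purely bookkeeping: keeping the $\phi(-1)$ sign factors and the $\epsilon(x)$ support conditions straight through the substitutions, and making sure no boundary term (at $y=0$, $y=1$, or $xy=1$) is mishandled — all of these vanish because $\phi$ annihilates $0$, so the identity of the full sums over $\F_q$ is unaffected. There is no analytic difficulty whatsoever; the result is elementary and is already recorded in Greene's paper, so in the write-up I would simply cite \cite[Theorem ...]{Greene1} or reproduce this two-line change of variables.
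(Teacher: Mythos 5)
The paper does not actually prove this lemma; it is quoted directly from Greene \cite{Greene1}, so your decision to either cite Greene or reproduce the one-substitution computation matches the paper's treatment, and your overall strategy is the right one: writing both sides via \eqref{ecu13} and making the single change of variables $y\mapsto y/x$ in the sum $S(x):=\sum_{y\in\F_q}\phi(y)\phi(1-y)\phi(1-xy)$ does prove the identity, since $\phi(y/x)=\phi(y)\phi(x)$ and $\phi(1-y/x)=\phi(x)\phi(x-y)$ give $S(x)=\phi(x)^2\sum_y\phi(y)\phi(1-y)\phi(x-y)=\sum_y\phi(y)\phi(1-y)\phi(x-y)$, while the same factorization $\phi(x-y)=\phi(x)\phi(1-yx^{-1})$ shows this last sum equals $\phi(x)S(1/x)$; the $\phi(-1)/q$ prefactors on the two sides are identical and nothing further appears.

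That said, your bookkeeping as written is off and should be fixed before it goes in: after the substitution the accumulated factor is $\phi(x)^2=1$, not $\phi(x)^2\phi(x)$, so the intermediate claim $S(x)=\phi(x)\sum_y\phi(y)\phi(1-y)\phi(x-y)$ has a spurious $\phi(x)$; combined with your subsequent identification step, following the chain literally yields $S(x)=S(1/x)$, i.e.\ the transformation without the needed $\phi(x)$. Likewise no $\phi(-1)$ arises anywhere in the identification, so the sentence about tracking $\phi(-1)$ cancellation is a red herring. Finally, the proposed alternative substitution $y\mapsto 1-y$ does not give this lemma: it produces $\phi(1-x+xy)$ and hence (for $x\neq 1$) a Pfaff-type transformation with argument $x/(x-1)$, not the inversion $x\mapsto 1/x$. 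None of this affects the viability of your main route; a careful rerun of the single substitution, or the citation to Greene as in the paper, settles the lemma.
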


\begin{proposition}\label{prop:15} Let  $n=2,3,4$ or $6$, $\F_q$ be any finite field of size $q$ that is congruent to $1\mod n$, and $\gamma$ be any  order $n$ multiplicative character of $\F_q$. Then  $$\left |q^3 \cdot \pFFq{4}{3}{\gamma&\gamma^{-1}&\phi&\phi}{&\epsilon &\epsilon &\epsilon}{1;q}+\phi(-1)\gamma(-1)q\right |\le 2q^{3/2}.$$ \end{proposition}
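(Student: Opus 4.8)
The plan is to pass from the ${}_4\mathbb{F}_3$ evaluated at $1$ to a sum of products of two ${}_2\mathbb{F}_1$'s via Lemma \ref{lemma15}, and then to recognize each ${}_2\mathbb{F}_1$ factor as (a normalization of) a point count on an algebraic curve, so that the whole expression becomes a character sum that can be bounded by the Weil bounds. Concretely, by Lemma \ref{lemma15} we have
\[
q\,\pFFq{4}{3}{\gamma & \gamma^{-1} & \phi & \phi}{ & \epsilon & \epsilon & \epsilon}{1 ; q}
=  \sum_{z \in \mathbb{F}_q} \phi(z)\, \hgq{\phi}{\phi}{\epsilon}{z;q}\, \hgq{\gamma}{\gamma^{-1}}{\epsilon}{z;q},
\]
so $q^3$ times the left side is $q^2$ times this $z$-sum. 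The first step is to interpret $\hgq{\phi}{\phi}{\epsilon}{z;q}$: expanding \eqref{ecu13}, $q\,\hgq{\phi}{\phi}{\epsilon}{z;q}=\phi(-1)\sum_y \phi(y)\phi(1-y)\epsilon(1-zy)$, which (up to the explicit factor and the handling of $z=1$) is essentially the number of points on the conic/Legendre-family curve $w^2=y(1-y)(1-zy)$ or its degenerate analogue; in any case it is a quadratic (Jacobi/Legendre) character sum whose value is $O(\sqrt q)$ away from the finitely many degenerate $z$, and has closed form at the bad fibers. The second step is to interpret $\hgq{\gamma}{\gamma^{-1}}{\epsilon}{z;q}$ similarly: it is $\frac{\gamma(-1)}{q}\sum_y \gamma(y)\overline{\gamma}(1-y)\epsilon(1-zy)$, again an incomplete Jacobi-type sum, giving a curve (with multiplicative character $\gamma$ of order $n$) whose point count deviates from the main term by $O(\sqrt q)$ times a bounded constant depending only on $n$.

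The key structural step is then to combine these two interpretations. Multiplying the two hypergeometric factors and summing over $z$ with the weight $\phi(z)$ produces a two-variable character sum over $\mathbb{F}_q$, which by Lemma \ref{lemma12} (the $x\leftrightarrow 1/x$ transformation for $\hgq{\phi}{\phi}{\epsilon}{}$) can be massaged into a more symmetric form. I expect the clean way to finish is to recognize $q^2\sum_z \phi(z)\hgq{\phi}{\phi}{\epsilon}{z;q}\hgq{\gamma}{\gamma^{-1}}{\epsilon}{z;q}$ as (number of $\mathbb{F}_q$-points on a specific curve $C_\gamma$ of bounded genus) $-$ (explicit main term), where the main term is exactly $-\phi(-1)\gamma(-1)q$; the genus bound $g\le 1$ (which is why the target error is $2q^{3/2}$ after multiplying through, i.e. $2g\sqrt q$ on the relevant affine piece scaled by $q$) should come from the fact that $\gamma$ has order $n\in\{2,3,4,6\}$, so the associated cyclic cover of $\mathbb{P}^1$ branched at a few points has small genus. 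Then the Weil bound $|\#C(\mathbb{F}_q)-(q+1)|\le 2g\sqrt q$, together with careful bookkeeping of the degenerate fibers $z\in\{0,1\}$ and $y\in\{0,1\}$ and the points at infinity, yields the stated inequality $\bigl|q^3\cdot{}_4\mathbb{F}_3[\cdots;1;q]+\phi(-1)\gamma(-1)q\bigr|\le 2q^{3/2}$.

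The main obstacle, I expect, is twofold. First, getting the constants exactly right: one must show the main term is precisely $-\phi(-1)\gamma(-1)q$ (not merely $O(q)$), which requires evaluating the contributions of all degenerate fibers in closed form — these are complete Jacobi sums and Gauss-sum products, and the quadratic character $\phi$ in both the weight and the inner ${}_2\mathbb{F}_1$ creates sign subtleties (e.g. factors of $\phi(-1)$, $g(\phi)^2=\phi(-1)q$) that must be tracked meticulously. Second, establishing that the genus of the relevant curve is at most $1$ uniformly for all four values of $n$: a generic choice of characters would give a curve with genus growing, and it is exactly the low order of $\gamma$ (equivalently, that $\gamma,\gamma^{-1},\phi$ all have order dividing $6$) that collapses the ramification and forces $g\le 1$ — this is presumably where the hypothesis $n\in\{2,3,4,6\}$ is used in an essential way, and verifying it may require a case analysis or an appeal to a known classification of such hypergeometric curves (e.g. results on ${}_2\mathbb{F}_1$ and ${}_3\mathbb{F}_2$ motives of small weight, or the Beukers--Cohen--Mellit type analysis). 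An alternative to the geometric route, which may in fact be the authors' approach given the emphasis on hypergeometric transformations, is to apply a finite-field analogue of a classical hypergeometric evaluation (a Clausen- or Whipple-type identity) that expresses this particular ${}_4\mathbb{F}_3(1)$ in terms of Gauss sums when $\gamma$ has one of these special orders, and then bound the Gauss-sum expression directly; in that case the obstacle shifts to proving the requisite finite-field transformation formula, with the order restriction on $\gamma$ entering as the condition under which the classical identity has a clean finite-field analogue.
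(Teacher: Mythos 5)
The central step of your plan cannot work as stated. An error term of size $2q^{3/2}$ is the signature of weight-$3$ cohomology of a \emph{threefold}, not of a curve: after applying Lemma \ref{lemma15}, the quantity $q^{3}\,{}_4\mathbb{F}_3(1)$ becomes $\sum_{z}\phi(z)\,[q\,{}_2\mathbb{F}_1^{(\phi)}(z)]\,[q\,{}_2\mathbb{F}_1^{(\gamma)}(z)]$, a genuinely three-variable character sum (the two inner summation variables are independent), so it is not $q$ times the point count of a fixed curve of genus $\le 1$. Bounding it fiberwise by the curve Weil bound only gives $q\cdot O(\sqrt q)\cdot O(\sqrt q)=O(q^{2})$, which is weaker than the claimed estimate; to do better you need cancellation \emph{across} the $z$-sum, i.e.\ control of the middle cohomology of a threefold together with an exact identification of the ``trivial'' Frobenius eigenvalue $\phi(-1)\gamma(-1)q$ and the fact that the remaining piece has dimension exactly $2$ and is pure of weight $3$. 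Neither your genus computation nor a Clausen/Whipple-type Gauss-sum evaluation is supplied, and for these parameters no such elementary closed form is available, so the proposal has a genuine gap precisely at the point where all the difficulty lies.

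The paper's proof is short but rests on a deep external input: it rewrites Greene's ${}_4\mathbb{F}_3(1)$ as the normalized Katz/Beukers--Cohen--Mellit hypergeometric sum $H_q(\balpha,\bbeta;1)$ with $\balpha=\{\tfrac1n,\tfrac{n-1}n,\tfrac12,\tfrac12\}$, $\bbeta=\{1,1,1,1\}$ (this is where the hypothesis $n\in\{2,3,4,6\}$ enters, through the classification of the associated rigid hypergeometric Calabi--Yau threefolds), and then cites Theorem 2 of \cite{LTYW}, which gives $H_q(\balpha,\bbeta;1)=\phi(-1)\gamma(-1)q+A_{1,q}+A_{2,q}$ with $|A_{1,q}|=|A_{2,q}|=q^{3/2}$; the latter bound is essentially the Ramanujan--Petersson/Deligne bound for the weight-$4$ modular form attached to the rigid Calabi--Yau threefold, via modularity. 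Your instinct that the sum should be interpreted motivically and bounded by Weil-type results is in the right spirit, and your Lemma \ref{lemma15} reduction is the same identity the paper uses elsewhere (Lemma \ref{lem:<f,P_gamma>}), but the decisive decomposition into a $q$ main term plus two eigenvalues of absolute value exactly $q^{3/2}$ cannot be reached by a genus-$\le 1$ curve argument or by elementary Gauss-sum bookkeeping.
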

\begin{proof}This proposition is a corollary of  Theorem 2 of \cite{LTYW}. The background is about character sums in the perspective of hypergeometric motives  \cite{BCM,Katz1, RV2} and we will only point out how to obtain our claim.  Under the assumption on $n$, the choice of $\gamma$ is unique up to complex conjugation and $\gamma(-1)$ is independent of the choice of $\gamma$. For each $n$, let $\balpha=\{\frac 1n,\frac{n-1}n,\frac 12,\frac 12\}$ and $\bbeta=\{1,1,1,1\}$ and $\omega$ be any order $(q-1)$ multiplicative character of $\F_q$. Thus either $\gamma$ or $\gamma^{-1}$ is $\omega^{(q-1)/n}$. The normalized Katz version of hypergeometric sum is defined as (see  Definition 1.1 of \cite{BCM})

\begin{equation}\label{eq:H}
H_q(\balpha,\bbeta;\lambda):=\frac1{1-q}\sum_{k=0}^{q-2}
\prod_{\alpha\in \balpha}\frac{g(\omega^{k+(q-1)\alpha})}{g(\omega^{(q-1)\alpha})} \prod_{\beta\in \bbeta}\frac{g(\omega^{-k-(q-1)\beta})}{g(\omega^{-(q-1)\beta})}\,\omega^k\bigl((-1)^m\lambda\bigr).
\end{equation} We take $\lambda=1$ here. Then the conversion between Greene and the normalized Katz versions of hypergeometric finite sums says 
\begin{equation} -q^3 \cdot \pFFq{4}{3}{\gamma&\gamma^{-1}&\phi&\phi}{&\epsilon &\epsilon &\epsilon}{1;q}=H_q(\balpha,\bbeta;1),
\end{equation}independent of the choice of $\gamma$. Then Theorem 2 in \cite{LTYW} implies that there are two imaginary quadratic algebraic integers $A_{1,q}$ and $A_{2,q}$ (depending on both $n$ and $q$) both of complex absolute values $q^{3/2}$ such that $H_q(\balpha,\bbeta;1)=\phi(-1)\gamma(-1)q+A_{1,q}+A_{2,q}$. Thus $$\left |q^3 \cdot \pFFq{4}{3}{\gamma&\gamma^{-1}&\phi&\phi}{&\epsilon &\epsilon &\epsilon}{1;q}+\phi(-1)\gamma(-1)q\right |=|A_{1,q}+A_{2,q}|\le 2q^{3/2}.$$
The proof is now complete.
\end{proof}

\subsection{The vector space $\ell^2(\mathbb{F}_q,m) $}\label{psl_ls_sums}

Let $m: \mathbb{F}_q \rightarrow \mathbb{C}$ be $m(x)= 1 +q D_1(x) + q D_{-1}(x)$ where $D_{a}(x)$ is $1$ if $x=a$ and $0$ otherwise. We denote by   $\ell^2(\mathbb{F}_q,m) $ the vector space of complex-valued functions on $\mathbb{F}_q$ equipped with the Hermitian form 
\[
\langle f_1, f_2 \rangle :=\sum_{x \in \mathbb{F}_q} f_1(x) \overline{f_2(x)} m(x). 
\] 
Note that the following character sums are elements of  $\ell^2(\mathbb{F}_q,m)$.

\begin{definition}\label{def1}
For any multiplicative character $\gamma$ of $\mathbb{F}_q$, the \emph{Legendre sum} with respect to $\gamma$ is defined as
\[
P_{\gamma} (a) := \frac{1}{q} \sum_{x \in \mathbb{F}_q^*} \gamma(x)\phi(x^2-2ax+1), \quad \mbox{for all } a\in \mathbb{F}_q. 
\]
\end{definition}

\begin{definition}\label{def1_1}
For any multiplicative character $\beta$ of $\mathbb{F}_{q^2}$, the \emph{Soto-Andrade sum} with respect to $\beta$ is defined as
\[
R_{\beta} (a) := \frac{1}{q(q-1)} \sum_{r \in \mathbb{F}_{q^2}^*} \beta(r) \phi((r+r^q)^2 - 2(a+1)r^{1+q}), \quad \mbox{for all } a\in \mathbb{F}_q. 
\]
\end{definition}

The Legendre and Soto-Andrade sums have appeared several times in the literature in connection with the irreducible representations of $PGL(2,q)$ \cite{Kable}. In fact, we will encounter them in Section 4 in our study of some character sums over $PGL(2,q)$. In this section, we recall some properties of these sums that will be useful for us in the coming sections. 

The next lemma shows that the Legendre and Soto-Andrade sums form an orthogonal basis of $\ell^2(\mathbb{F}_q,m)$.

\begin{lemma}\label{lemma20}
(Kable, \cite{Kable}) The set 
\[
\mathfrak{L} :=\left\lbrace P_{\epsilon} - \frac{q-1}{q},  P_{\phi}, P_{\gamma}, R_{\beta} : \mbox{ } \gamma \in  \Gamma, \beta \in  B \right\rbrace
\]
is an orthogonal basis for the space $\ell^2(\mathbb{F}_q,m) $,  where $\Gamma$ and $B$ were defined in the end of Section \ref{psl_ct}  with $|\Gamma|=\frac{q-3}{2}$ and $|B|=\frac{q-1}{2}$. \bk The square norm of the elements of this basis are as follows:
\begin{eqnarray*}
 \left\Vert   P_{\epsilon} - \frac{q-1}{q} \right\Vert^2  & = &\frac{q^2-1}{q}, \\
 \Vert   P_{\phi} \Vert^2  & = &\frac{q^2-1}{q^2}, \\
 \Vert   P_{\gamma} \Vert^2  & = &\frac{q-1}{q}, \\
 \Vert   R_{\beta} \Vert^2  & = &\frac{q+1}{q}.
 \end{eqnarray*} 
\end{lemma}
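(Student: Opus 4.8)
\textbf{Proof proposal for Lemma \ref{lemma20}.}

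The plan is to verify that $\mathfrak L$ is an orthogonal set of the correct cardinality, which forces it to be a basis since $\dim \ell^2(\mathbb F_q,m) = q = 1 + 1 + 1 + \frac{q-3}{2} + \frac{q-1}{2}$, the five blocks corresponding to the constant function $P_\epsilon - \frac{q-1}{q}$, the function $P_\phi$, and the families $\{P_\gamma\}_{\gamma\in\Gamma}$ and $\{R_\beta\}_{\beta\in B}$. First I would record the basic symmetry properties $P_\gamma = P_{\gamma^{-1}}$ and $R_\beta = R_{\beta^{-1}}$ (immediate from the substitutions $x\mapsto x^{-1}$ in Definition \ref{def1} and $r\mapsto r^q$ in Definition \ref{def1_1}, using $\phi(x^2)=1$ and $\phi((r^{1+q})^\cdot)$ invariance), so that the listed elements are genuinely distinct and indexed without redundancy once one restricts to $\Gamma$ and $B$. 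The heart of the matter is a clean evaluation of the Hermitian inner products, and the natural device is to expand each $P_\gamma$ and each $R_\beta$ as a character sum and interchange the order of summation, isolating an inner sum over the relevant group ($\mathbb F_q^*$ or $\mathbb F_{q^2}^*$) weighted by the measure $m$.

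The key computational step is this: for multiplicative characters $\gamma_1,\gamma_2$ of $\mathbb F_q$ one has
\[
\langle P_{\gamma_1}, P_{\gamma_2}\rangle = \frac{1}{q^2}\sum_{x,y\in\mathbb F_q^*}\gamma_1(x)\overline{\gamma_2(y)}\sum_{a\in\mathbb F_q}\phi(x^2-2ax+1)\phi(y^2-2ay+1)\,m(x)
\]
— wait, more precisely the measure acts on the variable $a$, so I would write $\langle P_{\gamma_1},P_{\gamma_2}\rangle = \frac1{q^2}\sum_{x,y}\gamma_1(x)\overline{\gamma_2(y)}\,T(x,y)$ where $T(x,y) = \sum_{a\in\mathbb F_q}\phi\big((x^2-2ax+1)(y^2-2ay+1)\big)\,m(a)$. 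The sum $T(x,y)$ is a Jacobi-type quadratic character sum in $a$ with the extra point masses at $a=\pm1$; for fixed generic $x,y$ the quantity $x^2-2ax+1$ as a function of $a$ is linear, so $\phi$ of a product of two linears in $a$ summed over $\mathbb F_q$ is a standard evaluation (it equals $-\phi(\text{leading coeff product})$ unless the two linears are proportional, in which case it equals $q-1$ times a constant), and the $a=\pm1$ corrections contribute $q\phi((x^2\mp2x+1)(y^2\mp2y+1)) = q\phi((x\mp1)^2(y\mp1)^2)$, which is $q$ whenever $x,y\neq\pm1$. Carrying this through, the off-diagonal inner products vanish and the diagonal ones collapse to the stated norms $\frac{q-1}{q}$ for $P_\gamma$, $\frac{q^2-1}{q^2}$ for $P_\phi$, and $\frac{q^2-1}{q}$ for $P_\epsilon - \frac{q-1}{q}$ (the subtraction of $\frac{q-1}{q}$ being exactly the projection that kills the constant-function overlap coming from $\gamma=\epsilon$ behaving differently at $x$ where $\gamma$ is trivial). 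The orthogonality $\langle P_{\gamma_1},R_\beta\rangle = 0$ and $\langle R_{\beta_1},R_{\beta_2}\rangle = \frac{q+1}{q}\delta$ are handled the same way but now the inner sum over $r\in\mathbb F_{q^2}^*$ of $\beta(r)\phi(\text{quadratic in }r)$ must be paired against either another Soto-Andrade sum or a Legendre sum; one uses that $R_\beta$ vanishes identically when $\beta$ is a character of order $\le 2$ (so that the basis genuinely uses only $\beta\in B$) and the norm computation is a Gauss-sum manipulation over $\mathbb F_{q^2}$, relating $|g(\beta)|^2 = q^2$ down to the factor $\frac{q+1}{q}$ after dividing by $q^2(q-1)^2$ and summing over the $q-1$ cosets.

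I expect the main obstacle to be the bookkeeping of the point masses at $\pm 1$: the measure $m$ is rigged precisely so that these contributions restore orthogonality across the ``boundary'' values $a=\pm1$ where the quadratic forms $x^2-2ax+1 = (x\mp1)^2$ degenerate, and one must check that the exceptional terms with $x=\pm1$ or $y=\pm1$ in the outer sums combine correctly with the $qD_{\pm1}$ weights rather than double-counting. A secondary subtlety is the special role of $\gamma=\epsilon$ and $\gamma=\phi$: $P_\epsilon$ is not orthogonal to the constants, which is why the basis element is the shifted function $P_\epsilon - \frac{q-1}{q}$, and $P_\phi$ has a slightly smaller norm because $\phi^2=\epsilon$ makes one of the cross terms in $T(x,y)$ collapse. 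Since this is a quoted result of Kable \cite{Kable}, an alternative and shorter route is simply to cite that reference for the orthogonality and norms and to include only the dimension count $1+1+1+\frac{q-3}{2}+\frac{q-1}{2}=q$ confirming that an orthogonal set of this size must span; I would present the self-contained Gauss-sum computation only to the extent needed for the paper to be readable, deferring the routine quadratic-sum evaluations to Kable's paper.
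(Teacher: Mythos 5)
The paper does not actually prove this lemma: it is quoted verbatim from Kable \cite{Kable}, so your fallback option --- cite the reference and record only the dimension count --- is precisely what the paper does, and there is no in-paper argument to compare against step by step. Your direct-verification sketch is nevertheless essentially viable for the Legendre half. Writing $\langle P_{\gamma_1},P_{\gamma_2}\rangle$ as a double sum over $x,y\in\mathbb{F}_q^*$ with an inner sum over $a$ weighted by $m(a)$, the inner polynomial in $a$ has discriminant $4(y-x)^2(xy-1)^2$, so the standard quadratic character sum evaluation gives $-\phi(xy)$ except when $y\in\{x,x^{-1}\}$, and the point masses $q$ at $a=\pm1$ exactly cancel the corrections from those exceptional pairs; carrying this through does reproduce $\Vert P_{\gamma}\Vert^2=\frac{q-1}{q}$, $\Vert P_{\phi}\Vert^2=\frac{q^2-1}{q^2}$ and the pairwise orthogonality of the $P$'s, while $\Vert P_{\epsilon}-\frac{q-1}{q}\Vert^2$ follows immediately from Lemma \ref{lemma17}.

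Two points need repair. First, your cardinality count $1+1+1+\frac{q-3}{2}+\frac{q-1}{2}$ equals $q+1$, not $q$; the set $\mathfrak{L}$ has $2+\frac{q-3}{2}+\frac{q-1}{2}=q$ elements, and it is this count, together with $\dim\ell^2(\mathbb{F}_q,m)=q$, that upgrades orthogonality to the basis statement. Second, the Soto-Andrade half is only gestured at: the evaluations of $\langle R_{\beta_1},R_{\beta_2}\rangle$ and of the mixed products $\langle P_{\gamma},R_{\beta}\rangle$ require parametrizing $r\in\mathbb{F}_{q^2}^*$ by trace and norm and are genuinely more involved than a single appeal to $|g(\beta)|^2=q^2$, and your auxiliary claim that $R_{\beta}$ vanishes identically when $\beta$ has order at most $2$ is both unsubstantiated and unnecessary --- once the $q$ listed functions are shown orthogonal (hence independent), spanning is automatic. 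So as written the proposal is a reasonable outline whose Legendre part is correct, but it is not a complete self-contained proof without the torus computation that Kable carries out; deferring to \cite{Kable}, as you suggest at the end, matches the paper's actual treatment.
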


If we normalize the basis given by Lemma \ref{lemma20} then we can easily obtain an orthonormal basis of $\ell^2(\mathbb{F}_q,m)$. We denote the elements of this orthonormal basis by $\{ P_{\epsilon}', P_{\phi}', P_{\gamma}', R_{\beta}' : \mbox{ } \gamma \in  \Gamma, \beta \in  B\}$.

The next lemmas list some elementary properties of the Legendre and Soto-Andrade sums that we will need later. Lemma \ref{lemma17} implies that the Legendre sum with respect to the trivial character is easy to evaluate.  This is not true for Legendre sums with respect to characters of higher orders. On the other hand, Lemma \ref{lemma18} shows that the Legendre and Soto-Andrade sums are easy to evaluate at $\pm 1$.  
\begin{lemma}\label{lemma17}
The values of the Legendre  sum with respect to $\epsilon$ are,
 \[
 P_{\epsilon}(a)= \left\lbrace  \begin{array}{ll}
 \frac{q-2}{q}, & \mbox{ if }a = \pm 1,\\
 -\frac{2}{q},  & \mbox{ if }a \neq \pm1.
 \end{array}\right. 
 \]
\end{lemma}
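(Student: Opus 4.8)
The plan is a direct evaluation of the sum $P_{\epsilon}(a) = \frac{1}{q}\sum_{x\in\F_q^*}\phi(x^2-2ax+1)$ by completing the square and using the standard evaluation of a quadratic character sum over $\F_q$. First I would rewrite $x^2-2ax+1 = (x-a)^2 + (1-a^2)$ and note that the value of $\phi$ at this expression depends on whether the quadratic $x^2-2ax+1$ has a repeated root, distinct roots, or no roots in $\F_q$, i.e. on the discriminant $4a^2-4 = 4(a^2-1)$. This is exactly the trichotomy $a = \pm 1$ versus $a\neq\pm 1$.

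For the case $a=\pm 1$: here $x^2 - 2ax + 1 = (x-a)^2$, so $\phi((x-a)^2) = \phi((x-a)^2)$ equals $1$ whenever $x\neq a$ and $0$ when $x=a$. Summing over $x\in\F_q^*$: the term $x=a$ (which lies in $\F_q^*$ since $a=\pm1\neq 0$) contributes $0$, and the remaining $q-2$ values of $x\in\F_q^*$ each contribute $1$. Hence $P_{\epsilon}(\pm 1) = \frac{q-2}{q}$, as claimed. (Here I use that $q$ is odd, so $\pm 1$ are distinct nonzero elements; when $q=3$ one still has $q-2=1$ values, consistent with the formula.)

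For the case $a\neq\pm 1$: I would first extend the sum to all of $\F_q$ and correct for $x=0$. Since $\phi(0^2-2a\cdot 0+1)=\phi(1)=1$, we have $\sum_{x\in\F_q^*}\phi(x^2-2ax+1) = \Big(\sum_{x\in\F_q}\phi(x^2-2ax+1)\Big) - 1$. Now the complete sum $\sum_{x\in\F_q}\phi(x^2-2ax+1)$ is a classical Weil-type sum for a nondegenerate quadratic: for $\phi$ the quadratic character and a polynomial $x^2 + bx + c$ with discriminant $b^2-4c\neq 0$, one has $\sum_{x\in\F_q}\phi(x^2+bx+c) = -1$ (this follows from completing the square, $\sum_x\phi(x^2+bx+c) = \sum_x\phi\big((x+b/2)^2 - (b^2-4c)/4\big) = \sum_{u}\phi(u^2 - \delta)$ with $\delta = (b^2-4c)/4 \neq 0$, and then $\sum_u\phi(u^2-\delta) = \sum_{t}(1+\phi(t))\phi(t-\delta)$ after substituting $u^2=t$ with multiplicity $1+\phi(t)$, which reduces to $\sum_t\phi(t-\delta) + \sum_t\phi(t(t-\delta)) = 0 + (-1) = -1$ using the standard evaluation $\sum_t\phi(t^2-\delta t) = -1$ for $\delta\neq 0$). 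Here $b=-2a$, $c=1$, and $b^2-4c = 4(a^2-1)\neq 0$ precisely because $a\neq\pm1$. Therefore $\sum_{x\in\F_q^*}\phi(x^2-2ax+1) = -1 - 1 = -2$, giving $P_{\epsilon}(a) = -\frac{2}{q}$.

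There is no serious obstacle here; the only points requiring a little care are the bookkeeping of the excluded term $x=0$ (and $x=a$ in the degenerate case), the use of $q$ odd so that $\phi$ is well-defined and $\pm 1$ are distinct, and quoting the elementary evaluation $\sum_{x\in\F_q}\phi(x^2+bx+c) = -1$ for nondegenerate quadratics. I would state this last fact with a one-line justification as above rather than a citation, since it is completely standard.
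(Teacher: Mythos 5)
Your proof is correct. The paper states this lemma without proof (it is listed among elementary properties of the Legendre sums), and your argument — the discriminant trichotomy, the degenerate case $(x-a)^2$ with the convention $\phi(0)=0$, and the classical evaluation $\sum_{x\in\F_q}\phi(x^2+bx+c)=-1$ for $b^2-4c\neq 0$ — is exactly the standard computation the paper itself invokes elsewhere (via Theorem 5.48 of Lidl--Niederreiter in the proof of Corollary \ref{usefulcor}), so there is nothing to correct.
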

 
\begin{lemma}\label{lemma18}
Let $\gamma$ and  $\beta$ be characters from the sets $\Gamma$ and $B$, respectively. Then $P_{\gamma}(1) = -1/q$ and $R_{\beta}(1)= 1/q$. Moreover,
 \[
P_{\gamma}(-1) = - \frac{\gamma(-1)}{q},  \quad  R_{\beta} (-1) = -\frac{\beta(i)}{q} 
 \]
where $i \in \mathbb{F}_{q^2}^*\setminus \mathbb{F}_q$ such that $i^2 \in \mathbb{F}_q^*$. 
\end{lemma}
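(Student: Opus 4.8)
The plan is to evaluate the two sums directly at $a=\pm1$ by exploiting the fact that the quadratic form inside the Legendre resp.\ Soto-Andrade sum degenerates, or becomes a perfect square, at these special points. For $P_\gamma(1)$, substitute $a=1$ into Definition \ref{def1}: the argument of $\phi$ becomes $x^2-2x+1=(x-1)^2$, so $\phi(x^2-2x+1)=\phi((x-1)^2)$ equals $1$ whenever $x\neq 1$ and $0$ when $x=1$. Hence
\[
P_\gamma(1)=\frac1q\sum_{x\in\F_q^*,\,x\neq 1}\gamma(x)
=\frac1q\Bigl(\sum_{x\in\F_q^*}\gamma(x)-\gamma(1)\Bigr)=\frac1q(0-1)=-\frac1q,
\]
using that $\gamma$ is a nontrivial character (as $\gamma\in\Gamma$ has order $>2$) so its sum over $\F_q^*$ vanishes. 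For $P_\gamma(-1)$, set $a=-1$: the argument becomes $x^2+2x+1=(x+1)^2$, so by the same reasoning $P_\gamma(-1)=\frac1q\sum_{x\in\F_q^*,\,x\neq -1}\gamma(x)=\frac1q(0-\gamma(-1))=-\gamma(-1)/q$.

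For the Soto-Andrade sums one argues the same way after recognizing the quadratic form as a norm/trace expression. At $a=1$, the argument of $\phi$ in Definition \ref{def1_1} is $(r+r^q)^2-4r^{1+q}=(r-r^q)^2$. Since $r-r^q$ lies in $\F_{q^2}$ and is $0$ exactly when $r\in\F_q$, we get $\phi((r-r^q)^2)$, and here one must be slightly careful: $\phi$ is the quadratic character of $\F_q$, and $(r-r^q)^2=-N(r)\cdot(\text{something})$; more precisely $(r-r^q)^2 = (r+r^q)^2-4r^{q+1}$, which for $r\notin\F_q$ is a nonzero element of $\F_q$ that is in fact a non-square times a square pattern — the cleanest route is to note $(r-r^q)^2\in\F_q^*$ for $r\notin\F_q$, and to split the sum over the $q-1$ cosets $r\F_q^*$, on each of which $\beta$ is constant (since $\beta$ is trivial on $\F_q^*$ after lifting, or rather $\beta\in B$ corresponds to a character of $\F_{q^2}^*/\F_q^*$). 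Then $R_\beta(1)=\frac1{q(q-1)}\sum_{r\in\F_{q^2}^*}\beta(r)\phi((r-r^q)^2)$; on $\F_q^*$ itself the form is $0$ so those terms drop, and on the remaining cosets $\phi((r-r^q)^2)=1$ since it is a square of a nonzero field element — wait, $\phi$ of a square in $\F_q$ is $1$, but $(r-r^q)^2$ need not lie in $\F_q$. Resolving this is the one genuine subtlety: one uses $(r-r^q)^2 = \big((r-r^q)\big)^2$ where $r-r^q$ satisfies $(r-r^q)^q = -(r-r^q)$, so $(r-r^q)^2$ is fixed by Frobenius, hence in $\F_q$, and being a square of a nonzero element of $\F_{q^2}^*$, its $\F_q$-value $\delta$ satisfies: $\delta$ is a square in $\F_q$ iff $r-r^q\in\F_q$, which never happens, so actually $\delta$ is a non-square and $\phi(\delta)=-1$. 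Re-doing the bookkeeping with this sign, together with the Gauss-sum evaluation $\sum_{r\in\F_{q^2}^*}\beta(r)$ over the relevant index set, should yield $R_\beta(1)=1/q$. At $a=-1$ the argument becomes $(r+r^q)^2$, the square of the trace $\mathrm{Tr}(r)\in\F_q$, giving $\phi((r+r^q)^2)=1$ whenever $\mathrm{Tr}(r)\neq 0$ and $0$ otherwise; the kernel of the trace is the $\F_q$-line spanned by $i$, so the vanishing terms are exactly $r\in i\F_q^*$, and summing $\beta$ over $\F_{q^2}^*\setminus i\F_q^*$ (again using coset-constancy and orthogonality) picks out the single contribution $-\beta(i)$, giving $R_\beta(-1)=-\beta(i)/q$.

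The main obstacle is purely the Soto-Andrade bookkeeping: correctly identifying which quadratic forms $(r\pm r^q)^2$ reduce to over $\F_q$, tracking the sign of $\phi$ on those reduced values, and handling the coset structure of $\F_{q^2}^*/\F_q^*$ so that the orthogonality relation for the character $\beta$ (viewed on $\F_{q^2}^*/\F_q^*$, of order $>2$, hence nontrivial) can be applied to leave exactly one surviving term. The Legendre part is routine; the Soto-Andrade part requires care but no new ideas beyond basic Galois theory of $\F_{q^2}/\F_q$ and character orthogonality.
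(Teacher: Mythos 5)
Your proof is correct, and since the paper states Lemma \ref{lemma18} without proof (as an ``elementary property''), your direct evaluation is exactly the intended argument. The Legendre part is clean. For the Soto--Andrade part, the one subtle point is resolved correctly in your second pass: writing $s=r-r^q$, one has $s^q=-s$, so $s^2\in\F_q$, and $s^2$ is a square in $\F_q$ iff $s\in\F_q$, which for $r\notin\F_q$ never happens (as $q$ is odd and $s\neq 0$); hence $\phi\bigl((r-r^q)^2\bigr)=-1$ off $\F_q$ and $0$ on $\F_q^*$. Then, since $\beta$ is nontrivial on $\F_{q^2}^*$ but trivial on $\F_q^*$, plain character orthogonality (not a Gauss sum, despite your wording) gives $\sum_{r\in\F_{q^2}^*\setminus\F_q^*}\beta(r)=-(q-1)$, so $R_\beta(1)=1/q$; and at $a=-1$ the trace-zero locus in $\F_{q^2}^*$ is $i\F_q^*$, on which $\sum_{r\in i\F_q^*}\beta(r)=(q-1)\beta(i)$, so $R_\beta(-1)=-\beta(i)/q$. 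If you write it up, delete the exploratory hedging and state these two facts outright; the bookkeeping with the normalizing factor $q(q-1)$ then closes the argument with no gaps.
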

 
\begin{lemma}\label{lemma21}
The values of the Legendre and Soto-Andrade sums are real numbers. Moreover, for every $\gamma \in \Gamma$, $\beta \in B$ and $a \in \mathbb{F}_q$ we have  
 \[
P_{\gamma^{-1}} (a) = P_{\gamma}(a) \quad \mbox{ and } \quad  R_{\beta^{-1}} (a) = R_{\beta}(a).
 \]
\end{lemma}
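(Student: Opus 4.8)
My plan is to prove the two assertions in order: first that the values $P_\gamma(a)$ and $R_\beta(a)$ are real, and then the inversion symmetries $P_{\gamma^{-1}}(a)=P_\gamma(a)$ and $R_{\beta^{-1}}(a)=R_\beta(a)$. In fact the inversion symmetry will essentially imply reality once combined with the behaviour under complex conjugation, so I would organize the argument around the change of variable that replaces the summation index by its inverse. For the Legendre sum, start from
\[
P_{\gamma}(a) = \frac{1}{q} \sum_{x \in \mathbb{F}_q^*} \gamma(x)\,\phi(x^2-2ax+1)
\]
and substitute $x \mapsto x^{-1}$. Since $x$ ranges over $\mathbb{F}_q^*$, so does $x^{-1}$, and one computes
\[
\phi\bigl(x^{-2} - 2a x^{-1} + 1\bigr) = \phi\bigl(x^{-2}\bigr)\,\phi\bigl(1 - 2ax + x^2\bigr) = \phi(x^2 - 2ax + 1),
\]
using $\phi(x^{-2}) = \phi(x)^{-2} = 1$ for $x \neq 0$ (as $\phi$ is quadratic). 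Meanwhile $\gamma(x^{-1}) = \gamma^{-1}(x)$, so the substitution yields exactly $P_{\gamma^{-1}}(a) = P_\gamma(a)$. For reality, apply complex conjugation: $\overline{P_\gamma(a)} = \frac1q\sum_x \overline{\gamma(x)}\,\phi(x^2-2ax+1)$ because $\phi$ is real-valued (its values lie in $\{0,\pm1\}$) and $a$ is a field element, hence $\overline{P_\gamma(a)} = P_{\overline\gamma}(a) = P_{\gamma^{-1}}(a) = P_\gamma(a)$, so $P_\gamma(a) \in \mathbb{R}$.

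For the Soto-Andrade sum the strategy is identical but carried out over $\mathbb{F}_{q^2}^*$. Starting from
\[
R_{\beta}(a) = \frac{1}{q(q-1)} \sum_{r \in \mathbb{F}_{q^2}^*} \beta(r)\,\phi\bigl((r+r^q)^2 - 2(a+1)r^{1+q}\bigr),
\]
I would substitute $r \mapsto r^{-1}$. One has $(r^{-1} + r^{-q}) = r^{-1-q}(r^q + r) = (r^{1+q})^{-1}(r+r^q)$ and $r^{-1-q} = (r^{1+q})^{-1}$, so
\[
(r^{-1}+r^{-q})^2 - 2(a+1)r^{-1-q} = (r^{1+q})^{-2}\Bigl[(r+r^q)^2 - 2(a+1)r^{1+q}\Bigr].
\]
Note that $r^{1+q} = N(r)$ is the norm of $r$, which lies in $\mathbb{F}_q^*$; hence $\phi\bigl((r^{1+q})^{-2}\bigr)$ makes sense and equals $1$ since $\phi$ is quadratic on $\mathbb{F}_q^*$. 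Therefore the argument of $\phi$ transforms correctly, $\beta(r^{-1}) = \beta^{-1}(r)$, and the substitution gives $R_{\beta^{-1}}(a) = R_\beta(a)$. Reality then follows as before: $\phi$ takes values in $\mathbb{R}$ and $a+1 \in \mathbb{F}_q$, so $\overline{R_\beta(a)} = R_{\overline\beta}(a) = R_{\beta^{-1}}(a) = R_\beta(a)$.

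The only point that requires a moment of care — and which I would flag as the main subtlety rather than a genuine obstacle — is the verification that $\phi$ evaluated at the quadratic factors ($x^{-2}$ in the Legendre case, $(r^{1+q})^{-2}$ in the Soto-Andrade case) equals $1$. This is where it matters that $\phi$ is the \emph{quadratic} character and that in the Soto-Andrade case the relevant factor $r^{1+q}$ is a \emph{norm}, hence lies in $\mathbb{F}_q^*$ where $\phi$ is defined, rather than in $\mathbb{F}_{q^2}$. Everything else is a routine change of variable. One small bookkeeping remark: the sums are over $\mathbb{F}_q^*$ and $\mathbb{F}_{q^2}^*$ respectively, so the map $x\mapsto x^{-1}$ is a genuine bijection of the index set and no boundary terms at $0$ appear; the extension-by-zero convention for $\phi$ is irrelevant here because all arguments stay away from the problematic points, and in any case $\phi$ is real regardless.
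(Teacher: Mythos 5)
Your proof is correct: the change of variable $x\mapsto x^{-1}$ (resp.\ $r\mapsto r^{-1}$), together with $\phi$ being trivial on squares and the observation that $r^{1+q}$ is a norm lying in $\mathbb{F}_q^*$, gives the inversion symmetry, and reality then follows from $\overline{P_\gamma(a)}=P_{\gamma^{-1}}(a)$ and $\overline{R_\beta(a)}=R_{\beta^{-1}}(a)$ since $\phi$ is real-valued. The paper states this lemma without proof as an elementary fact, and your argument is exactly the standard verification one would supply.
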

 
The following result establishes a relation between Legendre sums and hypergeometric sums over finite fields. This fact will be crucial later in this paper.

\begin{lemma}\label{lemma13}
(Kable, \cite{Kable}) If $\gamma$ is a nontrivial character of $\mathbb{F}_q$ and $a \in \mathbb{F}_q\setminus \{\pm 1\}$ then 
\[
P_{\gamma} (a) = \hgq{\gamma}{\gamma^{-1}}{\epsilon}{\frac{1-a}{2};q}.
\]
\end{lemma}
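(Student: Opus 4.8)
The plan is to prove the identity $P_\gamma(a) = \hgq{\gamma}{\gamma^{-1}}{\epsilon}{\frac{1-a}{2};q}$ by writing out both sides explicitly from their definitions and matching them after a suitable change of variable. The right-hand side, via formula \eqref{ecu13} with $\gamma_0 = \epsilon$, $\gamma_1 = \gamma$, $\gamma_2 = \gamma^{-1}$, and argument $x = \frac{1-a}{2}$, equals
\[
\epsilon\!\left(\tfrac{1-a}{2}\right)\frac{\gamma^{-1}\gamma(-1)}{q}\sum_{y\in\F_q}\gamma(y)\,(\gamma^{-1}\gamma^{-1})(1-y)\,\epsilon^{-1}\!\left(1-\tfrac{1-a}{2}y\right)
= \frac{1}{q}\sum_{y\in\F_q}\gamma(y)\,\gamma^{-2}(1-y),
\]
using $\gamma^{-1}\gamma(-1)=1$ and the observation that the factor $\epsilon(1-\frac{1-a}{2}y)$ is $1$ except when $1-\frac{1-a}{2}y = 0$, which (since $a\ne\pm1$) happens for exactly one value of $y$; so one must check that this $y$ contributes $0$ on the left-hand side as well. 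Actually, care is needed here: $\epsilon^{-1}=\epsilon$ kills that single term, and the prefactor $\epsilon(\frac{1-a}{2})=1$ since $a\ne 1$. So the RHS is $\frac1q\sum_{y}\gamma(y)\gamma^{-2}(1-y)$, where the sum effectively runs over $y\in\F_q^*$ with $y\ne 1$.

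Next I would transform the left-hand side. Starting from $P_\gamma(a) = \frac1q\sum_{x\in\F_q^*}\gamma(x)\phi(x^2-2ax+1)$, the key is to diagonalize the quadratic form $x^2 - 2ax + 1$. Over $\F_q$ one can substitute $x \mapsto$ a Möbius-type or linear fractional change that sends $x^2-2ax+1$ to something proportional to $\frac{(\text{linear})^2}{\text{linear}^2}$. A cleaner route: note that $x^2 - 2ax + 1 = (x - t)(x - t^{-1})$ where $t$ is a root in $\F_q$ or $\F_{q^2}$ of $T^2 - 2aT + 1$; but since $\phi$ is quadratic we can instead complete the square or use the standard substitution $x = \frac{1-s}{1+s}$ (or $x \mapsto \frac{u+1}{u-1}$), which turns $x^2 - 2ax + 1$ into a constant multiple of $\frac{(\,\cdot\,)}{(1+s)^2}$ and introduces the argument $\frac{1-a}{2}$ naturally. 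Since $\phi$ of a square is $1$ (on nonzero inputs), the $(1+s)^2$ in the denominator washes out, and the Jacobian-type factor $\gamma$ of the substitution combines with the remaining $\gamma(s)$ and $\phi$-terms. Matching the resulting single sum against $\frac1q\sum_y \gamma(y)\gamma^{-2}(1-y)$ — possibly after one more relabeling $y \leftrightarrow 1-y$ or $y\mapsto y^{-1}$ and using $\phi = \gamma\gamma$ is wrong in general, so instead one uses $\phi(z) = \gamma^{-1}\cdot\gamma^{-1}$... no: the correct reconciliation is that $\gamma^{-2}(1-y)$ must be rewritten, so one should pick the substitution so that the $\phi$ in $P_\gamma$ becomes exactly the exponent needed. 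I would verify the exact substitution by a direct computation on a small case or by tracking constants carefully.

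The main obstacle I anticipate is bookkeeping the constants and the behavior at the excluded points: one must ensure that (i) the substitution $x\mapsto \frac{1-s}{1+s}$ (or whichever is used) is a bijection of $\F_q^*\setminus\{\text{bad points}\}$ onto the summation range appearing on the RHS, (ii) the points where the map degenerates (e.g. $s=-1$, or $x=0$) contribute zero on both sides, which is where the hypothesis $a\ne\pm 1$ gets used — at $a = \pm 1$ the quadratic $x^2-2ax+1$ becomes a perfect square $(x\mp1)^2$ and $P_\gamma$ is a Jacobi-sum-type expression not captured by the $_2\F_1$ at the degenerate argument $0$ or $1$, and (iii) the characters $\gamma^{\pm 2}$ versus $\phi$ match up, which forces the specific form of the change of variables. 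A subtlety is that this identity is essentially the finite-field analogue of the classical integral representation $P_n^\gamma(a) \sim {}_2F_1$, so the "right" substitution is the one mirroring the classical proof; I would look up or re-derive that classical substitution and transplant it, then check all edge terms vanish. Since the statement is attributed to Kable \cite{Kable}, I would also cross-check my normalization of \eqref{ecu13} against his to make sure no stray factor of $\gamma(-1)$ or $q$ appears.
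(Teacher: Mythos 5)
The paper itself gives no proof of this lemma (it is quoted from Kable), so your plan stands or falls on its own, and as written it has a concrete error at the very first step: you have mis-slotted the parameters in Greene's definition \eqref{ecu13}. In the lemma the upper parameters are $\gamma,\gamma^{-1}$ and the lower parameter is $\epsilon$, so in \eqref{ecu13} one must take $\gamma_0=\gamma$, $\gamma_1=\gamma^{-1}$, $\gamma_2=\epsilon$, giving
\[
\hgq{\gamma}{\gamma^{-1}}{\epsilon}{\tfrac{1-a}{2};q}
=\frac{\gamma(-1)}{q}\sum_{y\in\F_q}\gamma^{-1}(y)\,\gamma(1-y)\,\gamma^{-1}\!\left(1-\tfrac{1-a}{2}\,y\right),
\]
which is exactly how the paper itself expands this $_2\F_1$ in the proof of Lemma \ref{lemma_extra}. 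Your assignment $\gamma_0=\epsilon$, $\gamma_1=\gamma$, $\gamma_2=\gamma^{-1}$ expands the different function $\hgq{\epsilon}{\gamma}{\gamma^{-1}}{x;q}$, and your resulting target $\frac1q\sum_y\gamma(y)\gamma^{-2}(1-y)$ is essentially a Jacobi sum independent of $a$ (apart from one deleted term). That target cannot equal $P_\gamma(a)$: for $\gamma=\phi$ it only takes the values $0$ and $-2/q$, which is incompatible with $\Vert P_\phi\Vert^2=(q^2-1)/q^2$ from Lemma \ref{lemma20}. The confusion you run into about matching $\phi$ with $\gamma^{\pm2}$ (``$\phi=\gamma\gamma$ is wrong in general\dots'') is a symptom of chasing this wrong right-hand side; in the correct expansion no $\phi$ and no $\gamma^{\pm2}$ appear at all.

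Beyond that, the actual heart of the proof is missing rather than merely sketchy: you never specify the mechanism that converts $\phi(x^2-2ax+1)$ in $P_\gamma(a)$ into a product of three $\gamma$-values, and the Möbius substitution paragraph ends unresolved. The standard route (and the natural finite-field analogue of the classical integral representation you allude to) is to replace $1+\phi(x^2-2ax+1)$ by the number of points $u$ with $u^2=x^2-2ax+1$, rationally parametrize this conic (it passes through $(x,u)=(0,\pm1)$; nondegeneracy is where $a\neq\pm1$ is used), and then collect the character values of the parametrizing rational functions to land exactly on $\gamma^{-1}(y)\gamma(1-y)\gamma^{-1}\bigl(1-\frac{1-a}{2}y\bigr)$ up to boundary terms; alternatively one can argue through Gauss--Jacobi sum identities. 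Until the parameter slots are fixed and this conversion is carried out explicitly (with the degenerate $x$, $y$ values checked), the proposal does not constitute a proof.
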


\section{A $PGL(2,q)$-module homomorphism}

In this section we show that the rank of the derangement matrix $M$ of $PSL(2,q)$ is equal to the dimension of the image of a certain $PGL(2,q)$-module homomorphism. Actually, we will show that $N = M^{\top}M$ is a matrix representation of a $PGL(2,q)$-module homomorphism.  We will use this fact to compute the rank of $M$. 

\subsection{The matrix $N$}

We identify the points of the projective line $PG(1,q)$ with elements of the set $\mathbb{F}_q \cup \lbrace \infty \rbrace$, by letting  $a \in \mathbb{F}_q$ denote the point
spanned by $(1,a)\in \F_q^2$ and denoting by $\infty$ the point spanned by $(0,1)$. 
We consider the natural right action of $PGL(2,q)$ on $PG(1,q)$. Let $a \in \mathbb{F}_q \cup \lbrace \infty \rbrace$ and $g \in PGL(2,q)$.  We use $a^g$ to denote the element in $PG(1,q)$ obtained by applying $g$ to $a$. The action of $PGL(2,q)$ on $PG(1,q)$ is faithful.  Hence, we can associate with each element of $PGL(2,q)$ a permutation of the $q+1$ elements of $PG(1,q)$.  Moreover, recall that an element $g \in PGL(2,q)$ is said to be a {\it derangement} if its associated permutation is fixed-point-free.

\begin{definition}\label{def_N}
Let $\Omega$ be the set of ordered pairs of distinct projective points in $PG(1,q)$. The matrix $N$ is a $q(q+1)$ by $q(q+1)$ matrix whose rows and columns are both indexed by the elements of $\Omega$; for any $(a,b), (c, d) \in \Omega$ we define
\[
	N_{(a,b),(c,d)} {:=} \mbox{ the number of derangements of } PSL(2,q) \mbox{ sending } a \mbox{ to } b \mbox{ and } c \mbox{ to } d.
\]
\end{definition}

 Note that the above definition of $N$ agrees with our former definition, $N=M^{\top}M$. Hence, basic linear algebra implies that $\mbox{rank}_{\C}(M)=\mbox{rank}_{\C}(N)$.  The next lemma gives information about the entries of $N$.


\begin{lemma}\label{lemma1}
Let $a,b,c,d \in \mathbb{F}_q \cup \lbrace \infty \rbrace$. Then,
\begin{enumerate}
  \item $\displaystyle N_{(a,b),(a,b)} = \frac{(q-1)^2}{4}, \quad \forall (a,b) \in \Omega$.
  \item $N_{(a,b),(c,d)}= 0$, if $a=c, b \neq d$ or $a \neq c, b = d$.
  \item $N_{(a, b),(b, a)} = \left\lbrace \begin{array}{ll}
0, & \mbox{ if } q \equiv 1 \mbox{ mod } 4,\\
(q-1)/2, & \mbox{ if } q \equiv 3 \mbox{ mod } 4,
\end{array} \right. \quad \forall (a,b) \in \Omega $.

\item 
	\begin{enumerate}
	\item $N_{(0, \infty),(1,0)} = \left\lbrace \begin{array}{ll}
				(q-1)/4, & \mbox{ if } q \equiv 1 \mbox{ mod } 4,\\
				(q-3)/4, & \mbox{ if } q \equiv 3 \mbox{ mod } 4.
				\end{array} \right. $
	\item $\displaystyle  N_{(0,\infty),(1,d)} = \frac{q-3}{4} - \frac{\phi(1-d)}{2} - \frac{1}{4} \sum_{x \in \mathbb{F}_q^*} \phi((x + x^{-1})^2 - 4d), \quad \forall d \neq 0,1, \infty.$
	\end{enumerate}
\end{enumerate}
Moreover, the value of $N_{(a,b),(c,d)}$ for any $(a,b),(c,d) \in \Omega$ is given by one of the above expressions.
\end{lemma}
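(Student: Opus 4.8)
The plan is to exploit the sharp 3-transitivity of $PGL(2,q)$ on $PG(1,q)$ to reduce the computation to a small number of orbit representatives, and then to count derangements in $PSL(2,q)$ with prescribed images on one or two points. Since $PGL(2,q)$ acts transitively on ordered pairs of distinct points and on ordered triples of distinct points, the quantity $N_{(a,b),(c,d)}$ depends only on the $PGL(2,q)$-orbit of the configuration $((a,b),(c,d))$, provided we keep track of how conjugation by $PGL(2,q)$ interacts with the set of derangements of $PSL(2,q)$. The key point is that $PSL(2,q)$ is normal in $PGL(2,q)$, so conjugation by any $h\in PGL(2,q)$ permutes the derangements of $PSL(2,q)$; hence $N_{(a,b)^h,(c,d)^h}=N_{(a,b),(c,d)}$ for all $h$. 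First I would enumerate the orbits of $PGL(2,q)$ on pairs of ordered pairs: (i) $(a,b)=(c,d)$; (ii) $a=c$, $b\neq d$ or $a\neq c$, $b=d$ (these configurations force $N=0$ trivially, since a single map cannot send $a$ to two distinct images); (iii) $\{a,b\}\cap\{c,d\}=\emptyset$ as unordered pairs but the configuration is ``transposition-like'', i.e. $(c,d)=(b,a)$ with $a,b$ the same two points swapped; and (iv) the generic case where the four ``slots'' $a\mapsto b$, $c\mapsto d$ involve enough freedom that one reduces to $(0,\infty)$, $(1,d)$ for $d\neq 0,1,\infty$, plus the boundary subcase $(0,\infty),(1,0)$. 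For (iv) I would use a group element of $PGL(2,q)$ to send $(a,b)\mapsto(0,\infty)$ (which still leaves a one-parameter family of choices), then use the residual freedom to normalize $c$ to $1$, leaving $d$ as the single genuine invariant; this explains why every entry is one of the listed expressions.

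For item (1): the derangements in $PSL(2,q)$ fixing the ordered pair $(a,b)$ with $a\neq b$ form, after conjugating so $a=0$, $b=\infty$, the set of elements $d_x$ (diagonal matrices) in $PSL(2,q)$ with no fixed point on $PG(1,q)$; but $d_x$ fixes $0$ and $\infty$, so it is never a derangement — wait, I must instead count derangements $g$ with $a^g=b$ and $a^g=b$, i.e. the diagonal-entry count is $N_{(a,b),(a,b)} = \#\{g\in\mathrm{Der}(PSL(2,q)) : a^g=b\}$. Conjugating to $a=0,b=\infty$, these are the matrices $g=\begin{pmatrix}0&*\\ *&*\end{pmatrix}$ sending $0\to\infty$, which are the images of $\begin{pmatrix}0&-1\\1&t\end{pmatrix}$ for $t\in\F_q$ together with a scaling; such $g$ is a derangement iff it has no eigenvalue in $\F_q$, and one counts that exactly half of the $q^2-1$-ish elements lie in $PSL$ and among those the proportion with irreducible characteristic polynomial gives $(q-1)^2/4$. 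The cleanest route is: the stabilizer-coset $\{g: 0^g=\infty\}$ in $PSL(2,q)$ has size $|PSL(2,q)|/(q+1)=q(q-1)/2$, and of these the non-derangements are exactly those fixing some point, which after a short count come out to leave $(q-1)^2/4$ derangements. Items (3) and (4a) are the analogous counts for the ``swap'' coset $\{g: 0^g=\infty,\ \infty^g=0\}$ and for the coset $\{g: 0^g=\infty,\ 1^g=0\}$ respectively; here the $q\pmod 4$ dichotomy enters because whether an involution-like element or a specific $v_r$-type element lies in $PSL$ versus $PGL$ is governed by whether certain field elements are squares, i.e. by $\phi(-1)=(-1)^{(q-1)/2}$.

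The substantive formula is (4b). Here I would parametrize explicitly: an element of $PGL(2,q)$ sending $0\mapsto\infty$ and $1\mapsto d$ is determined up to the remaining freedom by writing $g=\begin{pmatrix}\alpha&\beta\\ \gamma&\delta\end{pmatrix}$ with $\beta/\delta=\infty$ forcing $\delta=0$, and $1^g=(\alpha+\beta)/(\gamma+\delta)=(\alpha+\beta)/\gamma=d$; solving gives a one-parameter family, and $g\in PSL(2,q)$ iff $\det g$ is a square in $\F_q^*$, which becomes a condition $\phi(\text{something})=1$. Then $g$ is a derangement iff its characteristic polynomial is irreducible over $\F_q$, equivalently its ``discriminant'' is a non-square; writing the discriminant as a function of the free parameter $x$ (after clearing denominators, it becomes something like $(x+x^{-1})^2-4d$ up to the bookkeeping), the number of derangements becomes $\tfrac14\sum_x (1 - \phi(\text{disc}(x)))$ restricted by the $PSL$-membership indicator, and collecting the constant term, the term involving $\phi(1-d)$ (coming from the boundary values $x$ where the parametrization degenerates or where $g$ fixes $1$ or $d$), and the character-sum term $\sum_{x\in\F_q^*}\phi((x+x^{-1})^2-4d)$ yields the stated expression. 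The main obstacle I anticipate is precisely the careful bookkeeping in (4b): correctly identifying which values of the parameter $x$ correspond to actual derangements versus degenerate cases, correctly incorporating the $PSL$-versus-$PGL$ membership constraint into the character-sum count so that the ``$q-3$'' and the ``$\phi(1-d)/2$'' come out with the right constants, and verifying that no other $PGL(2,q)$-orbit of configurations has been missed — that is, that every $((a,b),(c,d))\in\Omega\times\Omega$ really does reduce to one of the cases (1)–(4). Once the orbit classification is pinned down and the single generic parameter isolated, the remaining work is the routine (if delicate) evaluation of a Legendre-type sum, which is exactly the kind of sum whose eventual non-vanishing is handled by the machinery of Section~\ref{psl_ls_sums}.
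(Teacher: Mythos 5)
Your overall skeleton does match the paper's: use the invariance $N_{(a,b),(c,d)}=N_{(a^g,b^g),(c^g,d^g)}$ (normality of $PSL(2,q)$ in $PGL(2,q)$ plus closure of the derangement set under conjugation) to reduce to a few representative configurations, then count derangements in explicit cosets via irreducibility of characteristic polynomials. But the proposal stops short of the content. Parts (1), (3), (4a) are only asserted (``after a short count''): for (1) you would still need either the total number $q(q-1)^2/4$ of derangements of $PSL(2,q)$ (the paper's route: by $2$-transitivity the $q$ quantities $N_{(a,b),(a,b)}$, $b\neq a$, are equal and sum to that total), or a direct count of the elements with reducible characteristic polynomial in the coset $\{g\in PSL(2,q):0^g=\infty\}$, which itself needs an evaluation such as $\sum_{\mu\in\F_q}\phi(\mu^2-4)=-1$; none of this is supplied. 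Also, your reduction of the ``moreover'' clause is incomplete as you yourself suspect: a configuration such as $\bigl((0,\infty),(\infty,b)\bigr)$ is not in the $PGL(2,q)$-orbit of any $\bigl((0,\infty),(1,d)\bigr)$ (the coincidence pattern of the four slots is an orbit invariant), and one needs additionally the symmetry $N_{\omega,\omega'}=N_{\omega',\omega}$ (from $N=M^{\top}M$), equivalently closure of derangements under inversion, to fold such entries into case 4(a).

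The more serious gap is (4b), which you flag as the anticipated obstacle and leave unexecuted, and the one concrete mechanism you do propose is not right. Parametrizing the relevant $PSL(2,q)$-coset by $g_\lambda$ with $\det g_\lambda=1$ and trace $\lambda+d\lambda^{-1}$, the discriminant of the characteristic polynomial \emph{as a function of the parameter} is $(\lambda+d\lambda^{-1})^2-4$, not $(x+x^{-1})^2-4d$; summing $\phi$ of the former over $\lambda$ is a different-looking sum, and it is not explained how it would turn into the stated one. The expression $(x+x^{-1})^2-4d$ arises only after reversing the count, which is the key step of the paper's proof: since $\det g_\lambda=1$, any $\F_q$-eigenvalues occur as a pair $\{x,x^{-1}\}$, and for a fixed pair the admissible parameters are the roots of $\lambda^2-(x+x^{-1})\lambda+d=0$, of which there are $1+\phi\bigl((x+x^{-1})^2-4d\bigr)$; summing over unordered pairs (with $x=\pm1$ treated separately, which is exactly where the $\phi(1-d)$ term and the constant $\tfrac{q-3}{4}$ come from, and with the $\lambda\sim-\lambda$ redundancy) yields the formula. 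This idea is absent from your plan; moreover, your alternative of parametrizing $PGL(2,q)$-elements and inserting a $\tfrac{1}{2}\bigl(1+\phi(\det)\bigr)$ indicator produces sums of products of quadratic characters with no indicated route back to the single clean sum $\sum_{x\in\F_q^*}\phi\bigl((x+x^{-1})^2-4d\bigr)$. Until this count is actually carried out, (4b) — and with it Corollary \ref{usefulcor} and everything downstream — is not established.
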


\begin{proof}

Let $g$ be an arbitrary element in $PGL(2,q)$. Note that for every  $h \in PSL(2,q)$ sending $a$ to $b$ and $c$ to $d$, the element  $g^{-1}h g \in PSL(2,q)$ sends $a^g$ to $b^g$ and $c^g$ to $d^g$. Hence the entries of $N$ satisfy the following property
\begin{equation}\label{ecu1}
N_{(a,b),(c,d)} = N_{(a^g,b^g),(c^g,d^g)},
\end{equation}
because $PSL(2,q)$ is a normal subgroup of $PGL(2,q)$ and the set of derangements in $PSL(2,q)$ is closed under conjugation.
To prove Lemma \ref{lemma1} we proceed case by case.

\begin{itemize}
\item \textbf{Case 1}.

\hspace{0.5cm}Recall that $N_{(a,b),(a,b)}$ is the number of derangements in $PSL(2,q)$ sending $a$ to $b$.  From Equation (\ref{ecu1}) and the $2$-transitivity of $PGL(2,q)$ we conclude that $N_{(a,b),(a,b)}= N_{(c,d),(c,d)}$ for any $(a,b), (c,d) \in \Omega$. The total number of derangements in $PSL(2,q)$ is $q(q-1)^2/4$ and this number can also be written as
\[
    \frac{q(q-1)^2}{4} = \sum_{\substack{b \in PG(1,q) \\ b \neq a} } N_{(a,b),(a,b)}, \quad \mbox{for any fixed }a \in PG(1,q),
\]
which implies that $N_{(a,b),(a,b)}= (q-1)^2/4$ for every $(a,b) \in \Omega$.

\item \textbf{Case 2}.

\hspace{0.5cm}Every element of $PSL(2,q)$ is related to a permutation of projective points in $PG(1,q)$. This implies $N_{(a,b)(a,d)}=0$ and $N_{(a,b)(c,b)}=0$ whenever $b \neq d$ and $a \neq c$.

\item \textbf{Case 3}.

\hspace{0.5cm}Using the $2$-transitivity of $PGL(2,q)$ and Equation (\ref{ecu1}) we can assume  without loss of generality  that $a=0$ and $b=\infty$. The elements $g_{\lambda} \in PSL(2,q)$ sending $0$ to $\infty$ and $\infty$ to $0$ are of the form
\[
g_{\lambda} := \left(  \begin{array}{cc}
0 & \lambda \\
-\lambda^{-1} & 0 
\end{array}\right), \quad \lambda \in \mathbb{F}_q^*. 
\]

\hspace{0.5cm}This representation of elements in $PSL(2,q)$ is redundant because  $g_{\lambda}$ and $g_{-\lambda}$ represent the same element of $PSL(2,q)$.  Let $\xi$ be an element in  $\mathbb{F}_q^* $ such that $\langle \xi\rangle = \mathbb{F}_q^* $. Hence, the set $\{ g_{\lambda} : \lambda=\xi^i, \quad i=1, \ldots, (q-1)/2\}$ corresponds precisely to the $(q-1)/2$ elements in $PSL(2,q)$ sending $0$ to $\infty$ and $\infty$ to $0$.

\hspace{0.5cm}Recall that $g_{\lambda}$ is a derangement if and only if its eigenvalues are not in $\mathbb{F}_q$. Thus, $g_{\lambda}$ is a derangement if and only if  its characteristic polynomial,
\[
p_{\lambda}(t) :=\det \left\vert \begin{array}{cc}
-t & \lambda\\
-\lambda^{-1} & -t 
\end{array}  \right\vert = t^2 + 1,
\]
is irreducible  over  $\mathbb{F}_q$.

\hspace{0.5cm} If $q \equiv 1 \pmod 4$ then $-1$ is a square in $\F_q$. Thus,  $p_{\lambda}(t)$ is reducible for every $\lambda \in \mathbb{F}_q^*$. Hence $N_{(a,b),(b,a)} =N_{(0,\infty),(\infty,0)} = 0$ in this case. On the other hand,  if $q \equiv 3 \pmod 4$ then $-1$ is not a square in $\F_q$. This implies that $p_{\lambda}(t)$ is irreducible for every $\lambda \in \F_q^*$. Therefore, $N_{(a,b),(b,a)} =N_{(0,\infty),(\infty,0)} = (q-1)/2$.

\item \textbf{Case 4}.

\hspace{0.5cm}Every element of $PSL(2,q)$ sending $0$ to $\infty$ and $1$ to $d$ is of the form
\[
g_{\lambda} :=\left(  \begin{array}{cc}
0 & -\lambda \\
\lambda^{-1} & \lambda^{-1}d + \lambda 
\end{array}\right), \quad \lambda \in \mathbb{F}_q^* .
\]
Again note that  $g_{\lambda}$ and $g_{-\lambda}$ represent the same element of $PSL(2,q)$. The matrix $g_{\lambda}$ is a derangement if and only if its characteristic polynomial,
\[
p_{\lambda}(t) :=\det \left\vert \begin{array}{cc}
-t & -\lambda\\
\lambda^{-1} & \lambda^{-1}d + \lambda -t 
\end{array}  \right\vert = t^2 - (\lambda^{-1}d + \lambda)t+1,
\]
is irreducible  over  $\mathbb{F}_q$. To compute $N_{(0,\infty) (1,d)}$ it is enough to count the number of values of $\lambda$ such that $p_{\lambda}(t)$ is reducible.

\hspace{0.5cm}If $p_{\lambda}(t)$ is reducible then there exist $x$ and $y$ in $\mathbb{F}_q^*$ such that
\[
p_{\lambda}(t) =  t^2 - (\lambda^{-1}d + \lambda)t+ 1 = (t - x)(t - y) = t^2 - (x+y)t + xy.
\] 
Hence, $xy=1$ and $x+y= \lambda^{-1}d + \lambda$. Assume without loss of generality that $y=x^{-1}$. If there exist values of $\lambda$ such that $g_{\lambda}$ has eigenvalues $\{x,x^{-1}\}$, then they have to satisfy the following quadratic equation
\begin{equation}\label{ecu2}
\lambda^2 - (x + x^{-1})\lambda + d =0.
\end{equation}

\begin{itemize}
\item \textbf{Case 4 (a)}:

\hspace{0.5cm}If we assume $d=0$ then $\lambda=0$ is a solution of Equation (\ref{ecu2}), however, that solution is not admissible by the definition of $g_{\lambda}$. Hence,  we just consider the solution $\lambda = x + x^{-1}$ for every $x \in \mathbb{F}_q^*$. Moreover, note that $x$ and $x^{-1}$ generate the same value of $\lambda$. In fact, we can relate to each set $\{x, x^{-1}\}$ a unique value of $\lambda$.  

\hspace{0.5cm}Let $q \equiv 1 \pmod 4$  and  $k \in \mathbb{F}_q^*$ be an element of order $4$. Note that the set $\{k, k^{-1}\}$ does not generate any admissible value of $\lambda$. Thus, the number of values of  $\lambda$ such that $p_{\lambda}(t)$ is reducible is $(q-1)/2$. Therefore,
\[
N_{(0, \infty),(1,0)} = \frac{1}{2} \left( q-1 - \frac{q-1}{2} \right) = \frac{q-1}{4}.
\]

\hspace{0.5cm}On the other hand, if $q \equiv 3 \pmod 4$ then $\mathbb{F}_q^*$ does not have an element of order 4. This implies that every set $\{x, x^{-1}\} \subset \mathbb{F}_q^*$ generates an admissible value of $\lambda$. Thus, the number of values for $\lambda$ such that $p_{\lambda}(t)$ is reducible is $(q+1)/2$ and $N_{(0,\infty),(1,0)}=(q-3)/4$.

\item \textbf{Case 4 (b)}:

\hspace{0.5cm}The number of solutions of Equation (\ref{ecu2}) in $\mathbb{F}_q$ is given by $1 + \phi((x + x^{-1})^2 -4d)$. In this case, $x$ and $x^{-1}$ leads to the same value of $\lambda$. Thus,  the number of values of  $\lambda \in \mathbb{F}_q^*$ such that $p_{\lambda}(t)$ is reducible is
\[
2(1+\phi(1-d))  + \frac{1}{2} \sum_{\substack{x \in \mathbb{F}_q^* \\ x \neq 1,-1 }} (1 + \phi((x + x^{-1} )^2 - 4d)).
\]
Therefore, for $d\neq 0,1,\infty$,
\begin{eqnarray*}
N_{(0, \infty),(1,d)} & = & \frac{1}{2} \left\{ (q-1) - \left[ 2(1+\phi(1-d))  + \frac{1}{2} \sum_{\substack{x \in \mathbb{F}_q^* \\ x \neq 1,-1 }} (1 + \phi((x + x^{-1} )^2 - 4d)) \right] \right\} \\
	& = & \frac{q-3}{4} - \frac{\phi(1-d)}{2}  - \frac{1}{4} \sum_{\substack{x \in \mathbb{F}_q^* \\ x \neq 1,-1 }} \phi((x + x^{-1} )^2 - 4d) 
\end{eqnarray*}
which gives the desired formula for $N_{(0, \infty),(1,d)}$.
\end{itemize}

\end{itemize}
\end{proof}

\begin{corollary}\label{usefulcor}
Let $d \in \mathbb{F}_q$, $d\neq 0,1$. The number of derangements of $PSL(2,q)$ sending $0$ to $\infty$ and $1$ to $d$ can be expressed in terms of the Legendre sum with respect to $\phi$. Specifically,
\begin{equation}\label{ecu10}
N_{(0,\infty),(1,d)} = \frac{q-1}{4} - \frac{\phi(1-d)}{2}-\frac{q}{4} P_{\phi}(2d-1).
\end{equation}
\end{corollary}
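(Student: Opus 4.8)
The plan is to start from the explicit formula for $N_{(0,\infty),(1,d)}$ established in part 4(b) of Lemma \ref{lemma1}, namely
\[
N_{(0,\infty),(1,d)} = \frac{q-3}{4} - \frac{\phi(1-d)}{2} - \frac{1}{4}\sum_{x\in\mathbb{F}_q^*,\, x\neq\pm1}\phi((x+x^{-1})^2-4d),
\]
and show that the remaining character sum differs from $-qP_\phi(2d-1)$ by a harmless constant. First I would rewrite the sum over all of $\mathbb{F}_q^*$ rather than excluding $\pm1$: at $x=1$ the summand is $\phi(4-4d)=\phi(1-d)$ (since $4$ is a square), and at $x=-1$ it is $\phi(4-4d)=\phi(1-d)$ as well, so adding the two missing terms contributes $2\phi(1-d)$. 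Hence
\[
\sum_{x\in\mathbb{F}_q^*,\, x\neq\pm1}\phi((x+x^{-1})^2-4d) = \Big(\sum_{x\in\mathbb{F}_q^*}\phi((x+x^{-1})^2-4d)\Big) - 2\phi(1-d).
\]

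Next I would massage the full sum into the shape appearing in Definition \ref{def1}. Writing $\phi((x+x^{-1})^2-4d) = \phi(x^{-2})\phi((x^2+1)^2 - 4dx^2) = \phi((x^2+1)^2-4dx^2)$ because $\phi(x^{-2})=1$ for $x\neq 0$. Now substitute $u = x^2$; as $x$ ranges over $\mathbb{F}_q^*$, each nonzero square $u$ is hit exactly twice, so $\sum_{x\in\mathbb{F}_q^*}\phi((x^2+1)^2-4dx^2) = 2\sum_{u\in(\mathbb{F}_q^*)^2}\phi((u+1)^2-4du)$. Using $1+\phi(u)$ as the indicator (times $2$, roughly) of $u$ being a nonzero square, I get $2\sum_{u\in(\mathbb{F}_q^*)^2}\phi(\cdots) = \sum_{u\in\mathbb{F}_q^*}(1+\phi(u))\phi((u+1)^2-4du)$. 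The term $\sum_{u\in\mathbb{F}_q^*}\phi(u)\phi((u+1)^2-4du)$ is exactly $\phi(1)$ times... more precisely $(u+1)^2-4du = u^2 + (2-4d)u + 1 = u^2 - 2(2d-1)u + 1$, so $\sum_{u\in\mathbb{F}_q^*}\phi(u)\phi(u^2-2(2d-1)u+1) = q\,P_\phi(2d-1)$ by Definition \ref{def1}. The leftover piece $\sum_{u\in\mathbb{F}_q^*}\phi((u+1)^2-4du) = \sum_{u\in\mathbb{F}_q^*}\phi(u^2-2(2d-1)u+1)$ is a standard quadratic character sum of a quadratic polynomial whose discriminant is $4(2d-1)^2-4 = 16d(d-1)\neq 0$ for $d\neq 0,1$; such a sum over all of $\mathbb{F}_q$ equals $-\phi(1) = -1$ (leading coefficient a square, nonzero discriminant), and subtracting the $u=0$ term $\phi(1)=1$ gives $-1 - 1 = -2$ for the sum over $\mathbb{F}_q^*$.

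Assembling these: $\sum_{x\in\mathbb{F}_q^*,\,x\neq\pm1}\phi((x+x^{-1})^2-4d) = q\,P_\phi(2d-1) + (-2) - 2\phi(1-d) = q P_\phi(2d-1) - 2 - 2\phi(1-d)$. Plugging back,
\[
N_{(0,\infty),(1,d)} = \frac{q-3}{4} - \frac{\phi(1-d)}{2} - \frac14\big(qP_\phi(2d-1) - 2 - 2\phi(1-d)\big) = \frac{q-1}{4} - \frac{\phi(1-d)}{2} - \frac{q}{4}P_\phi(2d-1),
\]
which is exactly \eqref{ecu10}. The only point requiring care — and the main obstacle — is getting all the elementary constants right: correctly accounting for the double-cover $x\mapsto x^2$, the handling of the two excluded points $x=\pm1$, and the precise evaluation $\sum_{u\in\mathbb{F}_q}\phi(\text{quadratic with square leading coeff, nonzero discriminant}) = -1$. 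None of these is deep, but a sign or factor-of-two slip anywhere would break the identity, so I would verify the constant term by a direct check (e.g. comparing with the $d=0,1$ boundary behavior or with part 4(a) of Lemma \ref{lemma1}).
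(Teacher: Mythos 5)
Your central computation is exactly the paper's: multiplying by $\phi(x^2)$, substituting $u=x^2$ with the weight $1+\phi(u)$, recognizing $\sum_{u\in\mathbb{F}_q^*}\phi(u)\phi(u^2-2(2d-1)u+1)=q\,P_\phi(2d-1)$ from Definition \ref{def1}, and evaluating $\sum_{u\in\mathbb{F}_q}\phi(u^2-2(2d-1)u+1)=-1$ (valid since the discriminant $16d(d-1)$ is nonzero for $d\neq 0,1$). This correctly gives $\sum_{x\in\mathbb{F}_q^*}\phi((x+x^{-1})^2-4d)=q\,P_\phi(2d-1)-2$, which is the key identity in the paper's proof.

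However, the surrounding bookkeeping is internally inconsistent. The formula you quote as Lemma \ref{lemma1}(4b) is not the lemma's statement: there the sum runs over all of $\mathbb{F}_q^*$, with no exclusion of $x=\pm1$; the correct formula with the restricted sum would carry the coefficient $-\phi(1-d)$ rather than $-\phi(1-d)/2$ (the two versions differ by exactly the $2\phi(1-d)$ you add back). Starting from your quoted version and computing correctly, the last step would read
\[
\frac{q-3}{4}-\frac{\phi(1-d)}{2}-\frac14\bigl(q\,P_\phi(2d-1)-2-2\phi(1-d)\bigr)=\frac{q-1}{4}-\frac{q}{4}\,P_\phi(2d-1),
\]
with the $\phi(1-d)$ terms cancelling, which is not \eqref{ecu10}. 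Your final equality silently drops the $+\phi(1-d)/2$ arising from $-\tfrac14\cdot(-2\phi(1-d))$, and this second slip exactly compensates the misquoted starting formula, so you land on the correct identity only because two errors cancel. The fix is immediate and recovers the paper's argument verbatim: cite Lemma \ref{lemma1}(4b) as stated (sum over all of $\mathbb{F}_q^*$), omit the $x=\pm1$ adjustment entirely, and combine it with your evaluation $\sum_{x\in\mathbb{F}_q^*}\phi((x+x^{-1})^2-4d)=q\,P_\phi(2d-1)-2$ to obtain \eqref{ecu10} directly.
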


\begin{proof}
To prove this corollary, we compute
\begin{eqnarray*}
 \sum_{x \in \mathbb{F}_q^*} \phi((x + x^{-1})^2- 4d )& = & \sum_{x \in \mathbb{F}_q^*}  \phi(x^2)\phi((x + x^{-1})^2- 4d ) \\
  & = &  \sum_{x \in \mathbb{F}_q^*} \phi( x^4 - 2(2d-1)x^2 + 1).\end{eqnarray*}
Next we replace $x^2$ by $y$. If $y\in\F_q^*$ is not a square, then $1+\phi(y)=0$; on the other hand, if $y\in \mathbb{F}_q^*$  is a square, then  $x^2=y$ has $1+\phi(y)=2$ solutions. It follows that
  
\begin{eqnarray*}
 \sum_{x \in \mathbb{F}_q^*} \phi((x + x^{-1})^2- 4d )                                                                                    & = &  \sum_{y \in \mathbb{F}_q^*} (1 + \phi(y)) \phi(y^2 - 2(2d-1)y + 1)\\
                                                                                    & = &  \sum_{ y \in \mathbb{F}_q^*} \phi(y^2 - 2(2d-1)y + 1) + \sum_{y \in \mathbb{F}_q^*} \phi(y) \phi(y^2 - 2(2d-1)y + 1)\\
                                                                                    & = & -1  + \sum_{y \in \mathbb{F}_q} \phi(y^2 - 2(2d-1)y + 1)  + q P_{\phi} (2d-1).
\end{eqnarray*}

Applying Theorem 5.48 from \cite{FiniteFields} it follows that,
\[
	\sum_{y \in \mathbb{F}_{ q }} \phi(y^2 - 2(2d-1)y + 1)  = -1.
\]
Thus, the above computations imply that
\begin{equation}\label{ecu_extra1}
	 \sum_{x \in \mathbb{F}_q^*} \phi((x + x^{-1})^2- 4d ) = -2  + q P_{\phi} (2d-1).
\end{equation}
Now, Corollary \ref{usefulcor} follows from part 4(b) of Lemma \ref{lemma1}  and Equation (\ref{ecu_extra1}).
\end{proof}

\subsection{A permutation $PGL(2,q)$-module}

In this section we define a $PGL(2, q)$-module $V$ and a $PGL(2, q)$-module homomorphism $T_N$ from $V$ to $V$. We use the subscript $N$ to emphasize that $N$ is the matrix associated with $T_N$ with respect to a certain basis of $V$.

Recall that we denote by $\Omega$ the set of ordered pairs of distinct projective points in $PG(1,q)$. Let $V$ be the $\mathbb{C}$-vector space spanned by the vectors $\{e_{\omega}\}_{\omega \in \Omega}$. The dimension of $V$ is $q(q+1)$.

We define a right action of $PGL(2,q)$ on the basis $\{e_{\omega}\}$ of $V$. Specifically, if $\omega=(a,b)$ then 
$$e_{\omega} \cdot g = e_{\omega^g} = e_{(a^g,b^g)}$$
for any $g \in PGL(2,q)$.  Thus, $V$ is a right permutation $PGL(2, q)$-module. The next lemma shows that $V$ has a very simple decomposition into irreducible modules; apart from $V_{\lambda_{-1}}$ and $V_{\psi_{1}}$ each irreducible module of $PGL(2,q)$ appears exactly once.

Let $(\chi,\psi)$ denote the inner product of the characters $\chi$ and $\psi$ of $PGL(2,q)$ (see \cite[Section 2.3]{Serre}).

\begin{lemma}\label{lemma2}
Let $V_{\chi}$ denote an irreducible module of $PGL(2,q)$ with character $\chi$. Then the decomposition of V into irreducible constituents is given by,
\[
V \cong V_{\lambda_1} \oplus 2V_{\psi_1} \oplus V_{\psi_{-1}} \oplus \bigoplus_{\beta \in B} V_{\eta_{\beta}} \oplus \bigoplus_{\gamma \in  \Gamma } V_{\nu_{\gamma}}. 
\] 
\end{lemma}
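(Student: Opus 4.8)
The plan is to compute the decomposition of the permutation module $V$ by identifying its character $\chi_V$ and then taking inner products with each irreducible character of $PGL(2,q)$ using Table \ref{table1}. Since $V$ is the permutation module on $\Omega$, the set of ordered pairs of distinct points of $PG(1,q)$, its character value at $g\in PGL(2,q)$ is the number of ordered pairs $(a,b)$ with $a\neq b$ that are fixed by $g$; equivalently, if $g$ fixes $f(g)$ points of $PG(1,q)$, then $\chi_V(g) = f(g)(f(g)-1)$. Recalling that $\psi_1(g) = f(g) - 1$ and $\lambda_1(g) = 1$, we get the clean identity $\chi_V = \psi_1^2 + \psi_1$, i.e. $\chi_V = \psi_1(\psi_1 + \lambda_1)$, valid as a class function. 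So the whole computation reduces to decomposing $\psi_1^2 + \psi_1$.

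First I would record $f(g)$ on the six conjugacy class representatives from Table \ref{table1}: $f(I) = q+1$, $f(u) = 1$, $f(d_x) = 2$, $f(d_{-1}) = 2$, $f(v_r) = 0$, $f(v_i) = 0$. Hence $\chi_V$ takes values $q(q+1), 0, 2, 2, 0, 0$ on these classes. Then, using the class sizes listed in Section \ref{psl_ct} (namely $1$, $q^2-1$, $q(q+1)$ for each $d_x$ with $x\neq\pm1$ up to inversion, $q(q+1)/2$ for $d_{-1}$, $q(q-1)$ for each $v_r$ up to inversion/Frobenius, $q(q-1)/2$ for $v_i$), I would compute $(\chi_V,\chi)$ for $\chi\in\{\lambda_1,\lambda_{-1},\psi_1,\psi_{-1},\eta_\beta,\nu_\gamma\}$. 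The contributions from the $v_r$ and $v_i$ classes vanish since $\chi_V$ is zero there, so only the identity, the $u$-class (where $\chi_V=0$ too), and the diagonal classes contribute; in fact only the diagonal classes $d_x$ ($x\neq\pm1$), the class of $d_{-1}$, and the identity contribute. This makes each inner product a short finite sum. One expects to find $(\chi_V,\lambda_1)=1$, $(\chi_V,\lambda_{-1})=0$, $(\chi_V,\psi_1)=2$, $(\chi_V,\psi_{-1})=1$, $(\chi_V,\eta_\beta)=1$ for all $\beta\in B$, and $(\chi_V,\nu_\gamma)=1$ for all $\gamma\in\Gamma$, which is exactly the claimed decomposition.

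Alternatively — and perhaps more cleanly — I would use the standard fact that $\psi_1 + \lambda_1$ is the permutation character of $PGL(2,q)$ on $PG(1,q)$, so $\psi_1(\psi_1+\lambda_1)$ is the character of $\mathbb{C}[PG(1,q)]\otimes (\text{deleted permutation module})$, and the $2$-transitivity of $PSL(2,q)$ on $PG(1,q)$ gives strong control: the permutation character on ordered pairs of distinct points has rank equal to the number of $PGL(2,q)$-orbits on triples, which one can count directly from sharp $3$-transitivity considerations. But since Table \ref{table1} is already available, the direct inner-product computation is the most transparent route and I would present that.

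The main obstacle is bookkeeping rather than conceptual: one must correctly account for the fact that in Table \ref{table1} the label $x$ for $d_x$ runs over $\mathbb{F}_q^*\setminus\{1\}$ with $d_x\sim d_{x^{-1}}$, and $d_{-1}$ is a separate class of half the generic size, and similarly handle that $\Gamma$ and $B$ are chosen up to inversion; getting the orbit-counting and the ``up to inversion'' normalizations consistent with the class sizes is where an error would creep in. A useful consistency check is that the degrees match: $(q+1) + 2q + q + |B|(q-1) + |\Gamma|(q+1) = (q+1) + 3q + \frac{q-1}{2}(q-1) + \frac{q-3}{2}(q+1)$, which should equal $\dim V = q(q+1) = q^2+q$; verifying this identity at the end confirms the decomposition.
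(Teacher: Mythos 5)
Your proposal follows essentially the same route as the paper: compute the permutation character of $V$ on ordered pairs of distinct points (values $q(q+1),0,2,2,0,0$ on the classes of Table \ref{table1}) and then take inner products with each irreducible character, which yields exactly the stated multiplicities. One small slip: in your final dimension check the contribution of $V_{\lambda_1}$ should be $1$, not $q+1$; with that correction the total is $1+3q+\tfrac{(q-1)^2}{2}+\tfrac{(q-3)(q+1)}{2}=q^2+q=\dim V$ as desired.
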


\begin{proof}
Let $\pi$ be the character afforded by the $PGL(2,q)$-module $V$. By definition we have 
\[
\pi(g) := |\{ \omega \in \Omega :  \omega^g = \omega\}|
\]
hence the character $\pi$ has an easy description given by the following table
\begin{center}
\begin{tabular}{ c | c c c c c } 
             & 1          & $u$ & $d_x$  & $v_r$ \\ \hline
  $\pi$   & $q(q+1)$ & 0     &  2         & 0 \\  
\end{tabular}.
\end{center}

Now let $V_{\chi}$ be an irreducible representation of $PGL(2,q)$ and $\chi$ its irreducible character. It is known (\cite[ Chapter 2, Theorem 4]{Serre}) that the multiplicity of $V_{\chi}$ in $V$ is equal to the character inner product $(\pi, \chi)$.  
Thus, the lemma follows  by direct calculation using the character table of $PGL(2,q)$.
\end{proof}

For $a,b\in PG(1,q)$ with $a \neq b$, consider the following vectors in V,
\begin{eqnarray}
l_{a,b} & {:=} & \sum_{\substack{p \in PG(1,q) \\ p \neq a,b}} (e_{(a,p)} - e_{(b,p)} ) + e_{(a,b)} - e_{(b,a)} \label{v_ecu4},\\
r_{a,b} & {:=} & \sum_{\substack{p \in PG(1,q) \\ p \neq a,b}} (e_{(p,a)} - e_{(p,b)} ) + e_{(b,a)} - e_{(a,b)} \label{v_ecu5}.
\end{eqnarray}
We use these vectors to define the following vector subspaces of $V$,
\[
V_1 := \mbox{span}_{\mathbb{C}}\{ l_{a,b} : a,b \in PG(1,q), a \neq b \} \quad \mbox{and} \quad V_2 := \mbox{span}_{\mathbb{C}}\{ r_{a,b} : a,b \in PG(1,q), a \neq b \}. 
\]
In fact, the next lemma shows that $V_1$ and $V_2$ are $PGL(2,q)$-submodules of $V$.

\begin{lemma}\label{lemma4}
The vector subspaces $V_1$ and $V_2$ satisfy the following properties:
\begin{enumerate}
\item $\dim_{\mathbb{C}}(V_1) = \dim_{\mathbb{C}}(V_2) = q$,
\item $V_1 \cap  V_2 = \{ 0 \}$,
\item $V_1$ and $V_2$ are $PGL(2,q)$-submodules of $V$,
\item $V_1 \cong V_2 $ as $PGL(2,q)$-modules.
\end{enumerate}
\end{lemma}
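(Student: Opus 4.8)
The plan is to establish the four statements roughly in the order (3), (1), (2), (4), since the module structure makes everything else cleaner. First I would prove (3): I claim $l_{a,b} \cdot g = l_{a^g, b^g}$ and $r_{a,b}\cdot g = r_{a^g,b^g}$ for every $g \in PGL(2,q)$. This is a direct unwinding of the definitions \eqref{v_ecu4} and \eqref{v_ecu5} together with the permutation action $e_{(a,b)}\cdot g = e_{(a^g,b^g)}$: applying $g$ permutes the points $p \neq a,b$ among the points $p' \neq a^g, b^g$, and sends the two ``boundary'' terms $e_{(a,b)} - e_{(b,a)}$ to $e_{(a^g,b^g)} - e_{(b^g,a^g)}$. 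Hence the spanning sets of $V_1$ and $V_2$ are permuted by $PGL(2,q)$, so $V_1$ and $V_2$ are submodules.

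For (1), observe that the map $(a,b)\mapsto l_{a,b}$ is $PGL(2,q)$-equivariant from $\Omega$ (a $PGL(2,q)$-set) into $V$, so $V_1$ is a quotient of the permutation module $\mathbb{C}\Omega \cong V$ on which $PGL(2,q)$ acts as before; in particular $V_1$ is spanned by the orbit of a single vector, say $l_{0,\infty}$, under the point stabilizer cosets. A cleaner route: note the antisymmetry $l_{b,a} = -l_{a,b}$ (immediate from \eqref{v_ecu4}), and the relation $\sum_{b \neq a} l_{a,b} = 0$ for each fixed $a$ — indeed summing \eqref{v_ecu4} over all $b\neq a$, each $e_{(a,p)}$ appears $q$ times with $+$ and the cross terms cancel in pairs; one checks the total is $q\sum_p e_{(a,p)} - q\sum_p e_{(a,p)} = 0$ after reindexing. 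These give at most $q+1$ independent vectors reduced further by one relation, suggesting $\dim V_1 \leq q$. For the lower bound I would exhibit $q$ linearly independent $l_{a,b}$'s directly, e.g. fixing $a = \infty$ and varying $b$ over $\mathbb{F}_q$, then using the support of $e_{(\infty,p)}$-components to see independence modulo the single relation $\sum_b l_{\infty,b}=0$, yielding exactly $q$. The same argument verbatim handles $V_2$ by the symmetry $(a,p)\leftrightarrow(p,a)$.

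For (2), I would compare supports: a vector in $V_1$ is a combination of $l_{a,b}$'s, and I would show that the coefficient pattern of such a combination (viewed in the $\{e_{(a,b)}\}$ basis) is ``row-like'' — the coefficient of $e_{(a,b)}$ depends on $a,b$ in a way constrained by $l_{a,b}+l_{b,a}=0$ — whereas vectors in $V_2$ are ``column-like''; the only vector of both types is $0$. More robustly, one can argue at the level of modules: by (4) (proved next) plus Lemma \ref{lemma2}, $V_1$ and $V_2$ are each isomorphic to the $q$-dimensional irreducible $V_{\psi_1}$ (the standard module), which occurs in $V$ with multiplicity exactly $2$; if $V_1 \cap V_2 \neq 0$ it would be a submodule of an irreducible, hence all of $V_1 = V_2$, but one can rule this out by evaluating a single coordinate, e.g. noting $l_{0,\infty}$ has nonzero $e_{(0,1)}$-coefficient while every $r_{a,b}$ has... — actually the slicker finish is: the projection $V \to \mathbb{C}$ sending $e_{(a,b)} \mapsto 1$ kills all of $V_1$ (each $l_{a,b}$ has coefficient-sum zero) and also kills $V_2$, so I instead use the functional $e_{(a,b)}\mapsto f(a)$ for suitable $f$, which kills $V_2$ but not $V_1$. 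I expect this separation-of-the-two-copies step to be the main obstacle: showing $V_1 \neq V_2$ concretely requires producing an explicit linear functional (or an explicit coordinate comparison) distinguishing ``left'' from ``right'' translates, and care is needed because both modules are isomorphic abstractly.

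For (4), the isomorphism $V_1 \cong V_2$ is induced by the $PGL(2,q)$-equivariant linear map $\tau: V \to V$ defined on the basis by $\tau(e_{(a,b)}) = e_{(b,a)}$ (the ``transpose'' involution), which is clearly a module automorphism of $V$ since $PGL(2,q)$ acts on $\Omega$ commuting with the swap $(a,b)\mapsto(b,a)$; one checks directly from \eqref{v_ecu4} and \eqref{v_ecu5} that $\tau(l_{a,b}) = r_{a,b}$ (the generic terms $e_{(a,p)} - e_{(b,p)}$ go to $e_{(p,a)} - e_{(p,b)}$, and $e_{(a,b)} - e_{(b,a)}$ goes to $e_{(b,a)} - e_{(a,b)}$, matching \eqref{v_ecu5} exactly). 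Hence $\tau$ restricts to a $PGL(2,q)$-isomorphism $V_1 \xrightarrow{\sim} V_2$, completing the proof.
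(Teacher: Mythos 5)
Your parts (3) and (4) are fine; indeed your transpose involution $\tau(e_{(a,b)})=e_{(b,a)}$ is a clean way to obtain (4), since it is manifestly a module automorphism of $V$ carrying $l_{a,b}$ to $r_{a,b}$, and it sidesteps the well-definedness issue of defining a map only on the (linearly dependent) spanning vectors. But part (1) contains a genuine error: the relation $\sum_{b \neq a} l_{a,b} = 0$ is false. Summing \eqref{v_ecu4} over $b\neq a$, the positive terms give $q\sum_{p\neq a} e_{(a,p)}$, while the negative terms $-e_{(b,p)}$ all have \emph{first} coordinate $b\neq a$, so no cancellation with vectors of the form $e_{(a,p)}$ is possible; the sum equals $q\sum_{p\neq a}e_{(a,p)}-\sum_{c\neq a}\sum_{d\neq c}e_{(c,d)}\neq 0$. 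Moreover, if that relation did hold, the $q$ vectors $\{l_{a,b}: b\neq a\}$ could span at most a $(q-1)$-dimensional space, contradicting the very claim $\dim V_1=q$, and your sentence asserting their independence ``modulo the single relation'' is self-contradictory. What is actually needed (and what the paper uses) is the identity $l_{a,b}-l_{a,c}=l_{c,b}$ (and its analogue for the $r$'s), which shows that for a fixed $a$ the $q$ vectors $l_{a,b}$, $b\neq a$, already span $V_1$; their independence then follows by inspecting the coefficient of $e_{(b,a)}$, which occurs in $l_{a,b}$ but in no other $l_{a,p}$.

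Part (2) is also not established. The functional $e_{(a,b)}\mapsto f(a)$ does \emph{not} kill $V_2$: by \eqref{v_ecu5} it sends $r_{c,d}$ to $f(d)-f(c)$ (a corrected choice such as $e_{(a,b)}\mapsto f(a)+\tfrac1q f(b)$ does annihilate $V_2$ but not $V_1$, yet this only yields $V_1\neq V_2$, which gives $V_1\cap V_2=\{0\}$ only if you already know $V_1$ is irreducible). And the module-theoretic fallback is circular: from Lemma \ref{lemma2} alone, a $q$-dimensional submodule of $V$ could a priori be $V_{\psi_{-1}}$ or $V_{\lambda_1}\oplus V_{\eta_\beta}$, and the identification $V_1\cong V_2\cong V_{\psi_1}$ (the paper's Lemma \ref{3_lemma1}) is deduced \emph{from} $V_1\cap V_2=\{0\}$, not the other way around. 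The paper's proof of (2) is elementary and direct: write a putative nonzero $v\in V_1\cap V_2$ as $\sum_{p\neq a}\alpha_p l_{a,p}=\sum_{p\neq a}\beta_p r_{a,p}$ and compare the coefficient of $e_{(b,a)}$, which appears in only one $l_{a,p}$ (namely $p=b$) but in every $r_{a,p}$; this forces all $\alpha_p$ to coincide and, symmetrically, all $\beta_p$ to coincide, and then $\alpha=q\beta=q^2\alpha$ gives a contradiction. You should either adopt such a coordinate argument or supply both a correct separating functional and a proof of irreducibility; as written, (1) and (2) do not stand.
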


\begin{proof}
Note that the vectors defined in Equations (\ref{v_ecu4}) and (\ref{v_ecu5}) satisfy the following relations,
\[
l_{a,b} - l_{a,c} = l_{c,b} \quad \mbox{ and } r_{a,b} - r_{a,c} = r_{c,b}
\]
for all $a,b,c \in PG(1,q)$ with $a \neq b \neq c$. Hence, fixing $a \in PG(1,q)$ we see that $\{ l_{a,b} : b \in PG(1,q), b \neq a \}$ and $\{ r_{a,b}: b \in PG(1,q), b \neq a\}$ are basis for $V_1$ and $V_2$, respectively. 

To prove the conclusion in part (2) we proceed by contradiction. Assume there exists $v \in V_1 \cap V_2$ with $v \neq 0$. Hence we can write
\begin{equation}\label{ecu6}
v = \sum_{\substack{p \in PG(1,q) \\  p \neq a}} \alpha_p l_{a,p} = \sum_{\substack{p \in PG(1,q)\\ p \neq a}} \beta_p r_{a,p}
\end{equation}
where not all $\alpha_p$ and $\beta_p$ are equal to zero.

For a fixed $b \in PG(1,q)$, the vector $l_{a,b}$ is the only one in the set $\{l_{a,p}\}_{p \in PG(1,q), p \neq a}$ that contains $e_{(b,a)}$. On the other hand, every vector of the form $r_{a,p}$ with $p \neq a$ contains $e_{(b,a)}$. Therefore, using Equation (\ref{ecu6}) we get
\[
\alpha_b = \sum_{\substack{p \in PG(1,q)\\ p \neq a}} \beta_p,
\]
which implies that the values of the coefficients $\alpha_p$ in Equation (\ref{ecu6}) are all the same. Analogously, we can show that the values $\beta_p$ in Equation (\ref{ecu6}) are the same. Thus, we can rewrite Equation (\ref{ecu6}) as follows,
\[
\alpha \sum_{\substack{p \in PG(1,q) \\  p  \neq a}} l_{a,p} = \beta \sum_{\substack{p \in PG(1,q)\\ p \neq a}}  r_{a,p}
\]
where $\alpha = \sum_{p\neq a} \beta_p$ and $\beta= \sum_{p\neq a} \alpha_p$. This implies that $\alpha = q \beta = q^2 \alpha$, a contradiction, because $q$ is not equal to one.

To prove part (3) it is enough to note that $l_{a,b} \cdot g= l_{a^g,b^g}$ and $r_{a,b} \cdot g = r_{a^g, b^g}$ for all $a,b \in PG(1,q)$ with $a \neq b$.  For part (4) consider the function $\theta$ from $V_1$ to $V_2$ defined by $\theta(l_{a,b})=r_{a,b}$ for all $a,b \in PG(1,q)$ with $a \neq b$; we extend the definition of $\theta$ to all elements of $V_1$ linearly. Now, from the definition of $\theta$ we see that clearly 
\[
\theta( l_{(a,b)} \cdot g) =  \theta(l_{(a,b)}) \cdot g
\]
for all $g \in PGL(2,q)$ and $(a,b) \in \Omega$. Therefore, $\theta$ is a $PGL(2,q)$-module isomorphism. This completes the proof of part (4). 
\end{proof}

\begin{lemma}\label{3_lemma1}
The submodules $V_1$ and $V_2$ are isomorphic to $V_{\psi_1}$.
\end{lemma}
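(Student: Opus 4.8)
The plan is to identify the $PGL(2,q)$-module $V_1$ concretely and recognize it as a copy of the standard module $V_{\psi_1}$. First I would recall from Lemma \ref{lemma4} that, upon fixing a base point $a_0 \in PG(1,q)$, the set $\{l_{a_0,b} : b \in PG(1,q),\ b \neq a_0\}$ is a basis of $V_1$, so $\dim_{\mathbb C} V_1 = q$. Thus $V_1$ has the same dimension as $V_{\psi_1}$, which has degree $q$. Since by Lemma \ref{lemma2} the only irreducible constituents of $V$ of degree $q$ are $V_{\psi_1}$ and $V_{\psi_{-1}}$, it suffices to show that $V_1$ is irreducible and has character $\psi_1$ rather than $\psi_{-1}$; equivalently, it suffices to compute the character $\chi_{V_1}$ afforded by $V_1$ and check $\chi_{V_1} = \psi_1$.

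The key step is therefore the character computation. I would build a $PGL(2,q)$-equivariant map relating $V_1$ to the permutation module $\mathbb C^{PG(1,q)}$ on the projective line. Let $W = \mathbb C^{PG(1,q)}$ with basis $\{f_p\}_{p \in PG(1,q)}$ and the natural $PGL(2,q)$-action; its character is $1 + \psi_1$, i.e.\ $W \cong V_{\lambda_1} \oplus V_{\psi_1}$. Consider the linear map $\Phi\colon V \to W$ sending $e_{(a,b)} \mapsto f_a$ (summing the first coordinate), which is clearly $PGL(2,q)$-equivariant. Under $\Phi$ one computes $\Phi(l_{a,b}) = \sum_{p\neq a,b}(f_a - f_b) + f_a - f_b = q(f_a - f_b)$, so $\Phi$ maps $V_1$ onto the augmentation submodule $W_0 = \{\sum c_p f_p : \sum c_p = 0\}$, which is exactly $V_{\psi_1}$. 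Since $\dim V_1 = \dim W_0 = q$ and $\Phi|_{V_1}$ is surjective, it is an isomorphism $V_1 \cong W_0 \cong V_{\psi_1}$, proving the lemma. (Equivalently, the map $e_{(p,a)}\mapsto f_a$ handles $V_2$, giving another proof of Lemma \ref{lemma4}(4).)

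The main obstacle — really the only thing to check carefully — is the equivariance and injectivity of $\Phi|_{V_1}$. Equivariance is immediate since $\Phi(e_{(a,b)}\cdot g) = \Phi(e_{(a^g,b^g)}) = f_{a^g} = f_a \cdot g = \Phi(e_{(a,b)})\cdot g$. Surjectivity onto $W_0$ follows from $\Phi(l_{a,b}) = q(f_a - f_b)$ together with the fact that the differences $f_a - f_b$ span $W_0$. Injectivity on $V_1$ is then forced by the equality of dimensions. One subtlety worth a sentence: one must verify $\Phi(l_{a,b}) = q(f_a-f_b)$ rather than something smaller, which amounts to noting that the two ``extra'' terms $e_{(a,b)} - e_{(b,a)}$ in \eqref{v_ecu4} contribute $f_a - f_b$, matching the pattern of the sum over $p \neq a,b$; so altogether the coefficient of $f_a$ is $(q-1)+1 = q$ and of $f_b$ is $-(q-1)-1 = -q$. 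This also confirms $q \neq 0$ in $\mathbb C$ is used only to know $q(f_a - f_b) \neq 0$, consistent with the rest of the argument.
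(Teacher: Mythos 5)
Your proof is correct, but it takes a genuinely different route from the paper's. The paper's proof is a multiplicity argument: by Lemma \ref{lemma2} the only irreducible constituent of $V$ occurring with multiplicity greater than one is $V_{\psi_1}$ (multiplicity two), and since Lemma \ref{lemma4} gives two isomorphic $q$-dimensional submodules $V_1\cong V_2$ with $V_1\cap V_2=\{0\}$, every irreducible constituent of $V_1$ occurs at least twice in $V$ and hence must be $V_{\psi_1}$; the dimension count $\dim V_1=q=\dim V_{\psi_1}$ finishes. You instead construct the explicit equivariant map $\Phi\colon V\to \mathbb{C}^{PG(1,q)}$, $e_{(a,b)}\mapsto f_a$, compute $\Phi(l_{a,b})=q(f_a-f_b)$, and identify $V_1$ with the augmentation submodule $W_0$, which is $V_{\psi_1}$ because the permutation character of the $2$-transitive action on $PG(1,q)$ is $\lambda_1+\psi_1$ (with the analogous second-coordinate map, or Lemma \ref{lemma4}(4), handling $V_2$). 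Your computations check out: the $q-1$ terms of the sum plus the pair $e_{(a,b)}-e_{(b,a)}$ do give coefficient $q$, and surjectivity onto $W_0$ plus equality of dimensions forces the isomorphism. What each approach buys: yours is constructive and self-contained, needing neither the character-theoretic decomposition of $V$ in Lemma \ref{lemma2} nor the disjointness statement Lemma \ref{lemma4}(2) (and it incidentally reproves Lemma \ref{lemma4}(4)), only the standard fact that $W_0$ affords $\psi_1$; the paper's argument is shorter given that Lemmas \ref{lemma2} and \ref{lemma4} are already in place, which is presumably why it was chosen. Note also that the preliminary reduction in your first paragraph (ruling out $\psi_{-1}$ by a character computation) is never needed, since the explicit isomorphism with $W_0$ settles everything at once.
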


\begin{proof}
This result follows directly from Lemmas \ref{lemma2} and \ref{lemma4}. If we consider the decomposition of $V$ into irreducible constituents, we note that each irreducible representation appears only once, except for $V_{\psi_1}$ . Therefore, because $V_1$ is isomorphic to $V_2$, we must have $V_{\psi_1} \cong V_1 \cong V_2$.
\end{proof}

We now define a linear transformation $T_N$  from $V$ to $V$. We first define $T_N$ on the basis $\{e_{\omega}\}_{\omega \in \Omega}$ of $V$ by
\[
T_N(e_{(a,b)}) := \sum_{\omega \in \Omega} N_{\omega, (a,b)} e_{\omega}
\]
for any $(a,b) \in \Omega$, and then extend the definition of $T_N$ to all elements of $V$ linearly. It follows from the definition of $T_N$ that $N$ is the matrix associated with $T_N$ with respect to the basis $\{e_{\omega}\}_{\omega \in \Omega}$ of $V$. Therefore, the dimension of the image of $T_N$ is equal to the rank of the derangement matrix $M$ of $PSL(2,q)$ acting on $PG(1,q)$.

\begin{lemma}\label{lemma3}
 The linear transformation $T_N$ defined above  is a $PGL(2,q)$-module homomorphism from $V$ to $V$.
\end{lemma}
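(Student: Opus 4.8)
The plan is to verify directly that $T_N$ intertwines the right $PGL(2,q)$-action on $V$, that is, that $T_N(v\cdot g)=T_N(v)\cdot g$ for all $v\in V$ and all $g\in PGL(2,q)$. Since $T_N$ and the action are both $\mathbb{C}$-linear, it suffices to check this identity on the basis vectors $e_{(a,b)}$, $(a,b)\in\Omega$.

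First I would expand the left-hand side using the definition of the action on the basis followed by the definition of $T_N$:
\[
T_N(e_{(a,b)}\cdot g)=T_N(e_{(a^g,b^g)})=\sum_{\omega\in\Omega} N_{\omega,(a^g,b^g)}\,e_{\omega}.
\]
Next I would expand the right-hand side and re-index the resulting sum by $\omega'=\omega^g$ (so that $\omega=(\omega')^{g^{-1}}$, using $\omega^{gh}=(\omega^g)^h$):
\[
T_N(e_{(a,b)})\cdot g=\sum_{\omega\in\Omega}N_{\omega,(a,b)}\,e_{\omega^g}=\sum_{\omega'\in\Omega}N_{(\omega')^{g^{-1}},(a,b)}\,e_{\omega'}.
\]
Comparing the coefficient of $e_{(c,d)}$ on the two sides, the desired identity reduces to
\[
N_{(c^{g^{-1}},d^{g^{-1}}),(a,b)}=N_{(c,d),(a^g,b^g)}\qquad\text{for all }(a,b),(c,d)\in\Omega,\ g\in PGL(2,q).
\]

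Finally, this last equality is exactly the conjugation-invariance of the entries of $N$ recorded in Equation \eqref{ecu1}: applying \eqref{ecu1} with the element $g$ to the index pair $\big((c^{g^{-1}},d^{g^{-1}}),(a,b)\big)$ gives $N_{(c^{g^{-1}},d^{g^{-1}}),(a,b)}=N_{((c^{g^{-1}})^g,(d^{g^{-1}})^g),(a^g,b^g)}=N_{(c,d),(a^g,b^g)}$. Recall that \eqref{ecu1} holds (as established in the proof of Lemma \ref{lemma1}) because $PSL(2,q)$ is normal in $PGL(2,q)$, so conjugation by $g$ is a bijection of $PSL(2,q)$ that preserves the cycle type of the associated permutation of $PG(1,q)$ — hence preserves the set of derangements — and it carries an element realizing the constraints $a\mapsto b,\ c\mapsto d$ to one realizing $a^g\mapsto b^g,\ c^g\mapsto d^g$. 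This completes the argument. There is no genuine obstacle here; the only points demanding care are the right-action bookkeeping in the re-indexing $\omega\mapsto\omega^g$ and invoking \eqref{ecu1} in the correct direction. (One could equivalently run the same argument at the level of the derangement matrix $M$ rather than $N=M^{\top}M$, but the route above is the most economical.)
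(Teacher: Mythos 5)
Your argument is correct and matches the paper's proof essentially step for step: both expand $T_N(e_{(a,b)}\cdot g)$ and $T_N(e_{(a,b)})\cdot g$ on the basis $\{e_{\omega}\}_{\omega\in\Omega}$, re-index by $\omega\mapsto\omega^{g^{-1}}$, and reduce the identity to the conjugation-invariance $N_{(a,b),(c,d)}=N_{(a^g,b^g),(c^g,d^g)}$ of Equation~\eqref{ecu1}, which comes from the normality of $PSL(2,q)$ in $PGL(2,q)$ and the fact that derangements are preserved under conjugation. No gaps; your justification of \eqref{ecu1} is the same as the paper's.
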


\begin{proof}
To prove the lemma we have to show that the linear transformation $T_N$ respects the action of $PGL(2,q)$ on $V$; that is, for each $g \in PGL(2,q)$ and each $(a,b) \in \Omega$, 
\begin{equation}\label{ecu3}
T_N( e_{(a,b)} \cdot g) =  T_N(e_{(a,b)}) \cdot g.
\end{equation}

First, consider the left hand side of Equation (\ref{ecu3}). From the definition of $T_N$ it follows that
\[
T_N( e_{(a,b)} \cdot g) = T_N(e_{(a^g,b^g)}) = \sum_{\omega \in  \Omega} N_{\omega, (a^g,b^g)} e_{\omega}.
\] 

Now, note that the right hand side of  Equation (\ref{ecu3})  can be written as
\[ 
 T_N(e_{(a,b)}) \cdot g =  \sum_{\omega \in \Omega} N_{\omega, (a,b)} e_{\omega^g} =  \sum_{\omega^{g^{-1}} \in \Omega} N_{\omega^{g^{-1}}, (a,b)} e_{\omega}. 
\]

Furthermore, recall that $N_{(a,b),(c,d)} = N_{(a^g,b^g),(c^g,d^g)}$ for all $g \in PGL(2,q)$. Therefore,
\[
\sum_{\omega^{g^{-1}} \in \Omega} N_{\omega^{g^{-1}}, (a,b)} e_{\omega} = \sum_{\omega^{g^{-1}} \in  \Omega} N_{\omega, (a^g,b^g)} e_{\omega} =\sum_{\omega \in  \Omega} N_{\omega, (a^g,b^g)} e_{\omega}
\] 
which implies that Equation (\ref{ecu3}) holds.  This completes the proof of the lemma.
\end{proof}

\subsection{The image of $T_N$}\label{psl_im_TN}

Recall that the rank of the derangement matrix $M$ of $PSL(2,q)$ acting on $PG(1,q)$ is equal to the dimension of the image of $T_N$. Since $T_N$ is a $PGL(2, q)$-module homomorphism (Lemma \ref{lemma3}) we can use some tools from representation theory to compute the dimension of the image of $T_N$. We start by observing that the submodules $V_1$ and $V_2$ are in the kernel of $T_N$.

\begin{lemma}\label{3_lemma2}
The subspaces $V_1$ and $V_2$ lie in the kernel of $T_N$.
\end{lemma}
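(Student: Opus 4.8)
The plan is to show that $T_N(l_{a,b}) = 0$ and $T_N(r_{a,b}) = 0$ for all distinct $a,b \in PG(1,q)$; since these vectors span $V_1$ and $V_2$ respectively, this suffices. By Lemma \ref{lemma3} the map $T_N$ is a $PGL(2,q)$-module homomorphism and $V_1$, $V_2$ are $PGL(2,q)$-submodules (Lemma \ref{lemma4}), so by $2$-transitivity of $PGL(2,q)$ on $PG(1,q)$ it is enough to verify the vanishing for a single convenient pair, say $(a,b) = (0,\infty)$. I would therefore compute $T_N(l_{0,\infty})$ and $T_N(r_{0,\infty})$ directly from the definition $T_N(e_{(c,d)}) = \sum_{\omega \in \Omega} N_{\omega,(c,d)} e_{\omega}$.

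Expanding $T_N(l_{0,\infty})$, the coefficient of a fixed basis vector $e_{(u,v)}$ is
\[
\sum_{\substack{p \in PG(1,q)\\ p \neq 0,\infty}} \left( N_{(u,v),(0,p)} - N_{(u,v),(\infty,p)} \right) + N_{(u,v),(0,\infty)} - N_{(u,v),(\infty,0)}.
\]
The key observation is that $\sum_{p \neq 0} N_{(u,v),(0,p)}$ counts, for each derangement $h \in PSL(2,q)$ sending $u$ to $v$, how many choices of $p \neq 0$ satisfy $0^h = p$ — but $h$ is a permutation, so there is exactly one such $p$ (namely $p = 0^h$), provided $0^h \neq 0$, which holds automatically since $h$ is a derangement. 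Hence $\sum_{p \in PG(1,q),\, p\neq 0} N_{(u,v),(0,p)}$ equals the number of derangements in $PSL(2,q)$ sending $u$ to $v$, which is $N_{(u,v),(u,v)}$ when $u \neq v$ (and one checks the boundary terms $p = \infty$ are correctly included in the sum over $p \neq 0$, while $p=0$ contributes nothing). The same count gives $\sum_{p \neq \infty} N_{(u,v),(\infty,p)} = N_{(u,v),(u,v)}$ as well. So both telescoping-type sums yield the identical quantity independent of whether we use $0$ or $\infty$ as the anchor point, and the whole coefficient of $e_{(u,v)}$ collapses to $0$. The argument for $r_{0,\infty}$ is entirely parallel, now summing over the first coordinate: $\sum_{p \neq 0} N_{(u,v),(p,0)}$ counts derangements $h$ with $u^h = v$ and $p^h = 0$, and since $h^{-1}$ is also a derangement in $PSL(2,q)$ there is exactly one such $p = 0^{h^{-1}}$, again forcing cancellation.

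The main point requiring care — and the only genuine obstacle — is bookkeeping the "diagonal'' and "off-diagonal'' edge cases: making sure the terms $e_{(u,v)}$ with $u = v$ never arise (they don't, since $N$ is indexed by $\Omega$, pairs of \emph{distinct} points), and that the isolated correction terms $e_{(0,\infty)} - e_{(\infty,0)}$ in the definition of $l_{0,\infty}$ are exactly what is needed to make the index set of $p$ run over all of $PG(1,q) \setminus \{0\}$ rather than $PG(1,q) \setminus \{0,\infty\}$, so that the clean "$h$ is a permutation'' counting applies uniformly. Once the index sets are aligned, the cancellation is immediate and requires no character theory or use of the explicit values in Lemma \ref{lemma1}. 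Alternatively, and perhaps more cleanly, one can phrase this as: $l_{a,b}$ is (up to sign conventions on the two special terms) the image under the natural map $V \to V$ recording first coordinates of the difference $f_a - f_b$ where $f_a = \sum_{p \neq a} e_{(a,p)}$, and $T_N$ factors through a map that is constant on the $PSL(2,q)$-derangement count in a way that kills such differences; but the direct coefficient computation above is the most transparent route to write out.
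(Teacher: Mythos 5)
Your proof is correct: the coefficient-by-coefficient computation of $T_N(l_{0,\infty})$ and $T_N(r_{0,\infty})$, with the two isolated terms $e_{(0,\infty)}-e_{(\infty,0)}$ absorbed so that the anchor point $p$ runs over all of $PG(1,q)\setminus\{0\}$ (resp.\ $\setminus\{\infty\}$), is exactly the bookkeeping needed, and the reduction to the single pair $(0,\infty)$ via Lemma \ref{lemma3} and transitivity is legitimate (though not actually necessary, since the same computation works verbatim for general $(a,b)$). The route differs from the paper's in one structural respect: the paper does not compute with the entries of $N$ at all, but uses the factorization $N=M^{\top}M$ and verifies the stronger statement $Ml_{a,b}=Mr_{a,b}=0$ row by row --- for each derangement $g$, the row of $M$ meets the support of $l_{a,b}$ in exactly one $+1$ (at $(a,a^g)$) and one $-1$ (at $(b,b^g)$), since a fixed-point-free permutation sends $a$ and $b$ each to exactly one other point. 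That is the same counting principle you invoke, but applied before forming $M^{\top}M$, which makes the cancellation a one-line check and shows in addition that $V_1,V_2\subseteq\ker M$ itself. Your version buys independence from the factorization (it uses only the combinatorial description of $N$ in Definition \ref{def_N}), at the cost of slightly heavier index management; both are valid and neither needs the explicit values in Lemma \ref{lemma1}.
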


\begin{proof}

First, recall that the derangement matrix $M$ is a $q(q-1)^2/4$ by $(q+1)q$ matrix whose rows are indexed by the derangements of $PSL(2,q)$ and whose columns are indexed by elements of $\Omega$. For any derangement $g \in PSL(2,q)$ and $(a,b) \in \Omega$ we have 
\[
M(g, (a,b)) {:=} \left\lbrace \begin{array}{cl}
1, & \mbox{ if }a^g = b,\\
0, & \mbox{ otherwise.}
\end{array} \right.
\]
Furthermore, also by definition, we have $N=M^{\top}M$. Thus, the lemma follows from the following observation
\[
Ml_{a,b}=0 \quad \mbox{and} \quad Mr_{a,b}=0 \quad \mbox{for all }a,b \in PG(1,q),\mbox{ with }a\neq b,
\]
and the fact that for a fixed $a \in PG(1,q)$ the sets $\{l_{a,b}: b \in PG(1,q), b\neq a \}$ and $\{r_{a,b}: b \in PG(1,q), b\neq a \}$ are basis of $V_1$ and $V_2$, respectively.

\end{proof}

From Lemma \ref{3_lemma1} and \ref{3_lemma2}, we conclude that the restriction of $T_N$ to $2V_{\psi_1}$ is the zero map. It follows that the dimension of the image of $T_N$ is at most $q(q-1)$. Now, we consider the restriction of $T_N$ onto  the other irreducible constituents of $V$. To do that we apply Schur's lemma.

Let $\chi$ be the irreducible character corresponding to an irreducible representation of $PGL(2,q)$ appearing as a constituent of $V$. Schur's lemma implies that,
\[
T_N(V_{\chi}) \cong V_{\chi} \quad \mbox{ or } \quad T_N(V_{\chi})= \{0\}.
\]
Thus, either the dimension of the restriction of $T_N$ to $V_{\chi}$ is zero or is equal to the dimension of $V_{\chi}$. Hence, 
to study the image of $V_{\chi}$ under $T_N$ for any $\chi \in \{ \lambda_1, \psi_{-1}, \{\eta_{\beta} \}_{\beta \in B}, \{ \nu_{\gamma}\}_{\gamma \in \Gamma} \}$ we proceed in the following way:
\begin{enumerate}
\item Consider the vector $e_{(0,\infty)} \in V$.
\item Project $e_{(0,\infty)}$ onto $V_{\chi}$ using the following scalar multiple of a central primitive idempotent
		\[  E_{\chi} := \sum_{g \in PGL(2,q)} \chi(g^{-1}) g. \]
		
		Therefore, the projection of $e_{(0,\infty)}$ onto $V_{\chi}$ is equal to
		\[  E_{\chi}(e_{(0,\infty)}) = \sum_{g \in PGL(2,q) } \chi(g^{-1}) e_{(0^g,\infty^g)}  = \sum_{(a,b) \in \Omega} \left[  \sum_{0^g=a,\infty^g=b} \chi(g^{-1}) \right] e_{(a,b)}. 
		\]
		where $g$ in the inner sum runs over all elements in $PGL(2,q)$ sending $0$ to $a$ and $\infty$ to $b$.
		
\item To prove that $T_N(V_{\chi}) \cong V_{\chi}$  it  is enough to show that the $(0,\infty)$ coordinate of $T_N(E_{\chi}(e_{(0,\infty)}))$ is not equal to zero. This is equivalent to  showing that the following character sum is not equal to zero:
\begin{equation}\label{ecu7}
T_{N,\chi} := T_N(E_{\chi}(e_{(0,\infty)}))_{(0,\infty)} = \sum_{(a,b) \in \Omega} \left[ \sum_{0^g=a,\infty^g=b} \chi(g^{-1}) \right] N_{(0,\infty),(a,b)},
\end{equation} 
where $g$ in the inner sum runs over all elements in $PGL(2,q)$ sending $0$ to $a$ and $\infty$ to $b$.
\end{enumerate}

Therefore, we get the following lower bound on the rank of the derangement matrix $M$, 
\begin{equation}\label{INEQU}
	\sum_{\chi} \dim(V_{\chi}) \leq \mbox{rank}(M),
\end{equation}
where $\chi$ in the sum on the left hand side of (\ref{INEQU}) runs through $\{ \lambda_1, \psi_{-1}, \{\eta_{\beta} \}_{\beta \in B}, \{ \nu_{\gamma}\}_{\gamma \in \Gamma} \}$ such that $T_{N,\chi} \neq 0$. In particular,  if $T_{N,\chi}$ is not zero for all   $\chi \in  \{ \lambda_1, \psi_{-1}, \{\eta_{\beta} \}_{\beta \in B}, \{ \nu_{\gamma}\}_{\gamma \in \Gamma} \}$ then the rank of the derangement matrix $M$ is equal to $q(q-1)$. We conclude that to prove Theorem \ref{psl_teo2},  it is enough to show that the values of the character sums $T_{N,\chi}$ with $\chi \in  \{ \lambda_1, \psi_{-1}, \{\eta_{\beta} \}_{\beta \in B}, \{ \nu_{\gamma}\}_{\gamma \in \Gamma} \}$ are not equal to zero.  This will be our objective in the next two sections.

\section{The character sums $\displaystyle \sum_{0^g = \infty, \infty^g=0} \chi(g^{-1})$ and $\displaystyle \sum_{0^g = \infty, 1^g=d} \chi(g^{-1})$}

The sums $T_{N,\chi}$ are character  sums over $PGL(2,q)$. In general, it is not easy to get tight bounds on the values of characters sums over non-abelian groups. Fortunately, the close relationship between the irreducible characters of $PGL(2,q)$ and the multiplicative characters of $\F_q$ and $\F_{q^2}$ allows us to conclude in Section 5 that the expressions $T_{N,\chi}$ are not equal to zero. In this section, we show that we can express the sums $T_{N,\chi}$ in terms of characters sums over finite fields for every $\chi \in  \{ \lambda_1, \psi_{-1}, \{\eta_{\beta} \}_{\beta \in B}, \{ \nu_{\gamma}\}_{\gamma \in \Gamma} \}$.

First, we consider $T_{N,\chi}$ when $\chi= \lambda_1$.  In this case, we know that $\lambda_1(g)=1$ for any $g \in PGL(2,q)$. Moreover, there are precisely $q-1$ elements of $PGL(2,q)$ sending  $0$ to $a$ and $\infty$ to $b$ for any $a,b \in PG(1,q)$. Therefore, we can compute (\ref{ecu7}) explicitly for $\chi = \lambda_1$:
\[
T_{N,\lambda_1} = (q-1) \sum_{(a,b) \in \Omega} N_{(0,\infty)(a,b)}= (q-1)(q+1)\frac{(q-1)^2}{4},
\]
where we have used Lemma \ref{lemma1} to obtain the last equality. Thus, from the analysis given in Section \ref{psl_im_TN} we conclude that $T_N(V_{\lambda_1}) \cong V_{\lambda_1}$. 

The other irreducible characters of $PGL(2,q)$ are not so easy to handle. The next lemma gives an expression for  $T_{N,\chi}$ with $\chi \in  \{ \psi_{-1}, \{\eta_{\beta} \}_{\beta \in B}, \{ \nu_{\gamma}\}_{\gamma \in \Gamma} \}$ which will be helpful to write Equation (\ref{ecu7}) in terms of character sums over finite fields. 

\begin{lemma}\label{lemma7}
Let  $\chi$ be any irreducible character of $PGL(2,q)$ from the set $ \{ \psi_{-1}, \{\eta_{\beta} \}_{\beta \in B}, \{ \nu_{\gamma}\}_{\gamma \in \Gamma} \}$. Let $h$ be the unique element of $PGL(2,q)$ sending $0$ to $0$, $1$ to $\infty$, and $\infty$ to $1$. If $q \equiv 1 \mbox{ mod }4$ then
\[
T_{N,\chi} = \frac{(q-1)^3}{4} - \frac{q-1}{2} \sum_{0^g = \infty, \infty^g=0} \chi(g^{-1}) + (q-1)  \sum_{\substack{b \in \mathbb{F}_q^* \\ b \neq 1}}  \left[ \sum_{0^g=\infty,  1^g =b^h}  \chi(g^{-1}) \right] N_{(0,\infty),(1,b)},
\]
and if $q \equiv 3 \mbox{ mod }4$ then
\[
T_{N,\chi} = \frac{(q-1)^3}{4} + \sum_{0^g = \infty, \infty^g=0} \chi(g^{-1}) + (q-1)  \sum_{\substack{b \in \mathbb{F}_q^* \\ b \neq 1}}  \left[ \sum_{0^g=\infty,  1^g =b^h}  \chi(g^{-1}) \right] N_{(0,\infty),(1,b)}.
\]
\end{lemma}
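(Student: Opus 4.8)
The starting point is the defining formula \eqref{ecu7} for $T_{N,\chi}$, namely a sum over pairs $(a,b)\in\Omega$ of the inner character sum $\sum_{0^g=a,\infty^g=b}\chi(g^{-1})$ weighted by $N_{(0,\infty),(a,b)}$. The plan is to reorganize this double sum by grouping the pairs $(a,b)$ according to the $PGL(2,q)$-action, and to exploit two facts: first, the invariance $N_{(a,b),(c,d)}=N_{(a^g,b^g),(c^g,d^g)}$ from \eqref{ecu1}; and second, that for each fixed $(a,b)\in\Omega$ the inner sum $\sum_{0^g=a,\infty^g=b}\chi(g^{-1})$ can be rewritten, by replacing $g$ with $g_0 g'$ where $g_0$ is one fixed element sending $(0,\infty)$ to $(a,b)$ and $g'$ ranges over the point stabilizer of $(0,\infty)$ (the diagonal subgroup), so that the dependence on $(a,b)$ is controlled.

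The combinatorial heart is a change of variables on the index set $\Omega$. Since $PGL(2,q)$ is sharply $3$-transitive, every pair $(a,b)$ with $a\ne b$ can be written as $(a,b)=(0^g,\infty^g)$ for $g$ ranging over a coset of the two-point stabilizer, and one can parametrize $\Omega$ by the image of a third reference point. Concretely, I would use $h$, the unique element sending $0\mapsto 0$, $1\mapsto\infty$, $\infty\mapsto 1$, to relate the pair data to $N_{(0,\infty),(1,b)}$: the role of $h$ is precisely to convert a sum indexed by the third coordinate into one where the second coordinate is $1^g$, so that the already-computed entries of Lemma \ref{lemma1} part (4) become available. The reference point $(0,\infty)\mapsto(a,b)$ together with a choice of image of $1$ pins down $g$ uniquely, giving a bijection between $\Omega\times(PG(1,q)\setminus\{0,\infty\})$-type data and the relevant group elements. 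One then splits the resulting sum according to whether the pair $(a,b)$ equals $(0,\infty)$ (contributing the term $\frac{(q-1)^3}{4}$, using $N_{(0,\infty),(0,\infty)}=(q-1)^2/4$ and that there are $q-1$ elements fixing both points), equals $(\infty,0)$ (contributing $\pm\frac{q-1}{2}\sum_{0^g=\infty,\infty^g=0}\chi(g^{-1})$ or its analogue, where the coefficient $(q-1)/2$ versus the absence of that factor, and the sign, come from $N_{(0,\infty),(\infty,0)}$ in Lemma \ref{lemma1} part (3), which is $0$ when $q\equiv1\bmod 4$ — wait, that would kill the term — so more carefully the two cases in the statement track the two cases of part (3) and part (4a) of Lemma \ref{lemma1}), or is a ``generic'' pair, which assembles into the sum $\sum_{b\ne 0,1}[\sum_{0^g=\infty,1^g=b^h}\chi(g^{-1})]N_{(0,\infty),(1,b)}$ after applying the $PGL(2,q)$-invariance to move every generic pair to one of the form $(\infty, b^h)$.

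The main obstacle I expect is bookkeeping the scalar coefficients correctly: one must carefully count, for each orbit type of pairs $(a,b)$, how many group elements $g$ with $0^g=a,\infty^g=b$ there are and how the inner sum transforms under the substitution $g\mapsto g_0 g'$, while simultaneously keeping track of which entry of $N$ (from the four cases of Lemma \ref{lemma1}) applies, and the parity condition on $q\bmod 4$ enters exactly because $N_{(0,\infty),(\infty,0)}$ and $N_{(0,\infty),(1,0)}$ have $q\bmod 4$-dependent values. Once the dictionary between pair-orbits and $N$-entries is set up, the identity is a finite rearrangement; I would organize it as: (i) isolate the diagonal contribution, (ii) isolate the swap contribution $(a,b)=(\infty,0)$ and evaluate its $N$-coefficient via Lemma \ref{lemma1}(3) and the relevant part of (4a), (iii) show the remaining generic terms collapse, via \eqref{ecu1} and the substitution involving $h$, to the displayed sum over $b\in\F_q^*\setminus\{1\}$. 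The two displayed formulas then follow by reading off the $q\equiv1$ and $q\equiv3$ cases of these coefficient evaluations.
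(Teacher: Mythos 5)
Your overall skeleton---expanding \eqref{ecu7}, partitioning the pairs $(a,b)$ by their configuration relative to $(0,\infty)$, invoking the invariance \eqref{ecu1} and conjugation by $h$ to reduce the generic pairs to entries $N_{(0,\infty),(1,b)}$---is essentially the paper's approach, but two genuine gaps remain. First, you never actually evaluate the character sums attached to the non-generic configurations. The diagonal term is $\bigl[\sum_{0^g=0,\infty^g=\infty}\chi(g^{-1})\bigr]N_{(0,\infty),(0,\infty)}$, and the fact that the inner sum equals $q-1$ is \emph{not} just the count of the $q-1$ elements fixing $0$ and $\infty$: it is a character sum over the two-point stabilizer, and it equals $q-1$ only because the restriction of $\chi$ to that stabilizer contains the trivial character exactly once, i.e.\ $(\chi,\pi)=1$, which the paper obtains from Frobenius reciprocity together with Lemma \ref{lemma2}. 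One also needs $\sum_{0^g=0}\chi(g^{-1})=0$ (because $(\chi,\lambda_1+\psi_1)=0$ for the characters in question), and hence $\sum_{0^g=\infty}\chi(g^{-1})=0$; none of this representation-theoretic input appears in your proposal, and without it the coefficients cannot be pinned down.

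Second---and your own ``wait, that would kill the term'' flags exactly this---your three-bucket split (diagonal, swap, generic) omits the ``chain'' pairs $(\infty,b)$ with $b\in\F_q^*$ and $(a,0)$ with $a\in\F_q^*$, which share exactly one point with $(0,\infty)$ and carry the weight of Lemma \ref{lemma1}(4a), namely $(q-1)/4$ or $(q-3)/4$. These pairs involve only three distinct points, so no group element can move them to pairs $(1,b)$ with $b\neq 0,1$; they are not absorbed by the generic sum, and the middle term of the lemma comes precisely from them. Using $\sum_{0^g=\infty}\chi(g^{-1})=0$ one gets $\sum_{b\in\F_q^*}\sum_{0^g=\infty,\infty^g=b}\chi(g^{-1})=-\sum_{0^g=\infty,\infty^g=0}\chi(g^{-1})$, and similarly for the $(a,0)$ family; hence for $q\equiv1\pmod 4$ the two chain families contribute $-\tfrac{q-1}{2}\sum_{0^g=\infty,\infty^g=0}\chi(g^{-1})$ (the swap pair being weighted by $N_{(0,\infty),(\infty,0)}=0$), while for $q\equiv3\pmod 4$ they contribute $-\tfrac{q-3}{2}$ times that sum, which combines with the swap contribution $\tfrac{q-1}{2}$ times the same sum to give the coefficient $+1$. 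Without this mechanism your argument cannot produce the middle terms of either displayed formula, so as written the proposal does not close.
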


\begin{proof}
We start by presenting some results on character sums over $PGL(2,q)$ that we will need.

We denote by $PGL(2,q)_{0,\infty}$ the subgroup of $PGL(2,q)$ fixing $0$ and $\infty$. Analogously,  $PGL(2,q)_0$ denotes the subgroup of $PGL(2,q)$ fixing $0$. Applying the Frobenius Reciprocity Theorem  \cite[Chapter 7, Theorem 13]{Serre}, we have: 
\[
( \mbox{Res}(\chi) , 1 )_{PGL(2,q)_{0,\infty}}  = ( \chi , \pi )_{PGL(2,q)} \quad \mbox{and} \quad   ( \mbox{Res}(\chi) , 1 )_{PGL(2,q)_{0}}  = ( \chi , \lambda_1 + \psi_1 )_{PGL(2,q)}
\]
where $\pi$ is the permutation character defined in the proof of Lemma \ref{lemma2} and $1$ is the trivial character of the groups $PGL(2,q)_{0,\infty}$ and $PGL(2,q)_{0}$, respectively. Using these equalities and the decomposition of $\pi$ in terms of irreducible characters (which was given in Lemma \ref{lemma2}), we evaluate the following character sums:
\begin{equation*}
	\sum_{0^g = 0, \infty^g=\infty} \chi(g^{-1})   =  (q-1) ( \mbox{Res}(\chi) , 1 )_{PGL(2,q)_{0,\infty}}  = (q-1) ( \chi , \pi )_{PGL(2,q)} = q-1,
\end{equation*}
and
\begin{equation*}
	 \sum_{0^g = 0} \chi(g^{-1})   =  q(q-1)  ( \mbox{Res}(\chi) , 1 )_{PGL(2,q)_{0}}  = q(q-1) ( \chi , \lambda_1 + \psi_1 )_{PGL(2,q)} = 0. 
\end{equation*}

Note that $\chi(kgk^{-1})= \chi(g)$ for any $k \in PGL(2,q)$ since $\chi$ is a character, hence a class function. This fact implies many relations between character sums over $PGL(2,q)$. In particular, 
\begin{equation}\label{ecu_extra2}
\sum_{a^g =b} \chi(g^{-1}) = \sum_{(a^k)^g =(b^k)^g} \chi(g^{-1}), 
\end{equation}
and
\begin{equation}\label{ecu_extra3}
\sum_{a^g = b, c^g=d} \chi(g^{-1}) = \sum_{(a^k)^g = b^k, (c^k)^g=d^k} \chi(g^{-1}).
\end{equation}
We claim that $\sum_{0^g =\infty} \chi(g^{-1}) =0$. To prove this claim, recall that $\chi$ is a non-trivial character of $PGL(2,q)$. Therefore,
\[
	0= \sum_{g \in PGL(2,q)} \chi(g^{-1}) = \sum_{0^g = 0} \chi(g^{-1}) + \sum_{\substack{a \in PG(1,q)\\a \neq 0}} \sum_{0^g = a} \chi(g^{-1}). 
\]
Since $\sum_{0^g = 0} \chi(g^{-1})=0$, we conclude that
\[
	0 = \sum_{\substack{a \in PG(1,q)\\a \neq 0}} \sum_{0^g = a} \chi(g^{-1})= q \sum_{0^g = \infty} \chi(g^{-1}),
\]
where Equation (\ref{ecu_extra2}) is  used to obtain the last equality.

Moreover, it follows from the above equations and the $2$-transitivity of the action of $PGL(2,q)$ on $PG(1,q)$ that
\[
\sum_{\infty^g =\infty} \chi(g^{-1}) =0 \quad \mbox{and} \quad \sum_{\infty^g = 0} \chi(g^{-1})  = 0.
\]

Now, we are ready to prove Lemma \ref{lemma7}. From  Equation (\ref{ecu7}) and Lemma \ref{lemma1} we get,
\begin{eqnarray*}
T_{N,\chi}  & = & \frac{(q-1)^2}{4} \sum_{0^g = 0, \infty^{g}=\infty} \chi(g^{-1}) + \left[ \sum_{0^g = \infty, \infty^g=0} \chi(g^{-1}) \right] N_{(0,\infty),(\infty,0)} \\
 & & + \sum_{b \in \mathbb{F}_q^*} \left[ \sum_{0^g = \infty, \infty^g=b} \chi(g^{-1}) \right] N_{(0, \infty),(\infty,b)} + \sum_{a \in \mathbb{F}_q^*} \left[ \sum_{0^g = a, \infty^g=0} \chi(g^{-1}) \right] N_{(0, \infty),(a,0)} \\
 & & + \sum_{\substack{a,b \in \mathbb{F}_q^* \\ a \neq b}}  \left[ \sum_{0^g = a, \infty^g=b} \chi(g^{-1}) \right] N_{(0, \infty),(a,b)}. 
\end{eqnarray*}
First, assume that $q \equiv 1 \pmod 4$.  From Lemma \ref{lemma1} it follows that  $$N_{(0,\infty),(\infty,b)} =N_{(0,\infty),(a,0)} = (q-1)/4$$ for all $a,b \in \mathbb{F}_q^*$, and $$N_{(0, \infty),(\infty, 0)} = 0.$$ Hence, using the above analysis we can write,
\begin{eqnarray*}
 \sum_{b \in \mathbb{F}_q^*} \left[ \sum_{0^g = \infty, \infty^g=b} \chi(g^{-1}) \right] N_{(0, \infty),(\infty,b)}  & = &   \frac{q-1}{4} \sum_{b \in \mathbb{F}_q^*} \left[ \sum_{0^g = \infty, \infty^g=b} \chi(g^{-1}) \right]\\
& = &  \frac{q-1}{4} \left[\sum_{0^g =\infty} \chi(g^{-1})  - \sum_{0^g = \infty, \infty^g=0} \chi(g^{-1}) \right]\\
& = &   - \frac{(q-1)}{4}\sum_{0^g = \infty, \infty^g=0} \chi(g^{-1}),
\end{eqnarray*}
and using the same ideas we get
\[
\sum_{a \in \mathbb{F}_q^*} \left[ \sum_{0^g = a, \infty^g=0} \chi(g^{-1}) \right] N_{(0, \infty),(a,0)} = - \frac{(q-1)}{4}\sum_{0^g = \infty, \infty^g=0} \chi(g^{-1}).
\]

Let $a,b \in \mathbb{F}_q^*$ with $a \neq b$. Using the $3$-transitivity of the action of $PGL(2,q)$ on $PG(1,q)$ and (\ref{ecu1}) we conclude that $N_{(0,\infty),(a,b)}= N_{(0,\infty)(1,b^k)}$ where $k \in PGL(2,q)$ is the unique element sending $0$ to $0$, $\infty$ to $\infty$ and $a$ to $1$. Moreover, applying Equation (\ref{ecu_extra3}) we obtain 
\[
\sum_{0^g = a, \infty^g=b} \chi(g^{-1}) = \sum_{0^g = 1, \infty^g=b^k} \chi(g^{-1}).
\]  
Putting all these facts together we conclude that
\begin{eqnarray*}
\sum_{\substack{a,b \in \mathbb{F}_q^* \\ a \neq b}}  \left[ \sum_{0^g = a, \infty^g=b} \chi(g^{-1}) \right] N_{(0, \infty),(a,b)}  & = & (q-1)  \sum_{\substack{b \in \mathbb{F}_q^* \\ b \neq 1}}  \left[ \sum_{0^g=1,  \infty^g =b}  \chi(g^{-1}) \right] N_{(0,\infty),(1,b)} \\
& = & (q-1)  \sum_{\substack{b \in \mathbb{F}_q^* \\ b \neq 1}}  \left[ \sum_{0^g=\infty,  1^g =b^h}  \chi(g^{-1}) \right] N_{(0,\infty),(1,b)}.
\end{eqnarray*}
Thus, Lemma \ref{lemma7} is proved for the case where $q \equiv 1 \mbox{ mod }4$. Similar computations work for the case when $q \equiv 3 \mbox{ mod }4$.

\end{proof}

It follows from Lemma \ref{lemma7} that we can write $T_{N, \chi}$ in terms of the character sums
\[
 \sum_{0^g = \infty, \infty^g=0} \chi(g^{-1}) \quad \mbox{ and } \quad \sum_{0^g = \infty, 1^g=d} \chi(g^{-1}).
\]
The next four lemmas show that these character sums can be written in terms of character sums over finite fields for all $\chi \in \{ \psi_{-1}, \{\eta_{\beta} \}_{\beta \in B}, \{ \nu_{\gamma}\}_{\gamma \in \Gamma} \}$.

\begin{lemma}\label{lemma8}
Let $i$ be an element of  $\mathbb{F}_{q^2}^* \setminus \mathbb{F}_q^*$ such that $i^2\in \mathbb{F}_q^*$. Then,
\begin{eqnarray*}
 \sum_{0^g = \infty, \infty^g=0} \psi_{-1}(g^{-1})  & = &  \phi(-1) (q-1), \\
 \sum_{0^g = \infty, \infty^g=0} \nu_{\gamma}(g^{-1})  & = & \gamma(-1)(q-1) \quad \mbox{ for all } \gamma \in \Gamma,\\
 \sum_{0^g = \infty, \infty^g=0} \eta_{\beta}(g^{-1})  & = &  -\beta(i)(q-1) \quad \mbox{ for all } \beta \in B.
\end{eqnarray*}  
\end{lemma}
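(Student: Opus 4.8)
The plan is to compute each of the three character sums by explicitly parametrizing the set of group elements $g \in PGL(2,q)$ that send $0$ to $\infty$ and $\infty$ to $0$, then summing the relevant character value (read off from Table~\ref{table1}) over that parametrization. As in Case 3 of the proof of Lemma~\ref{lemma1}, every $g$ with $0^g = \infty$ and $\infty^g = 0$ is represented by a matrix
\[
g_\lambda = \begin{pmatrix} 0 & \lambda \\ -\lambda^{-1} & 0 \end{pmatrix}, \qquad \lambda \in \mathbb{F}_q^*,
\]
with $g_\lambda$ and $g_{-\lambda}$ giving the same element of $PGL(2,q)$; so this parametrization double-counts, and the $(q-1)/2$ distinct elements are indexed by $\lambda$ running over a set of coset representatives of $\{\pm 1\}$ in $\mathbb{F}_q^*$, or equivalently we may sum over all $\lambda \in \mathbb{F}_q^*$ and divide by $2$. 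Since the characters are class functions and $g_\lambda^{-1}$ is conjugate to $g_\lambda$ (indeed $g_\lambda^{-1} = -g_{\lambda}^{} \cdot(\text{scalar})$ in $PGL$, or one checks directly that $g_\lambda$ and its inverse have the same characteristic polynomial), it suffices to determine the conjugacy class of $g_\lambda$.

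The characteristic polynomial of $g_\lambda$ is $t^2 + 1$, independent of $\lambda$. First I would dispose of the case $q \equiv 1 \pmod 4$: here $-1$ is a square, $t^2+1$ splits, and $g_\lambda$ is conjugate to a diagonal element $d_x$ with $x = -1$ (the two eigenvalues are $\pm\sqrt{-1}$, whose ratio is $-1$). So every $g_\lambda$ lies in the class of $d_{-1}$, and the sum is $\tfrac{q-1}{2}$ times the corresponding character value at $d_{-1}$: namely $\psi_{-1}(d_{-1}) = \delta(-1)$, $\nu_\gamma(d_{-1}) = 2\gamma(-1)$, and $\eta_\beta(d_{-1}) = 0$. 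For $q \equiv 3 \pmod 4$, $-1$ is a non-square, $t^2+1$ is irreducible, and $g_\lambda$ is conjugate to some $v_r$ with $r^2 = -1$, i.e.\ $r \in i\mathbb{F}_q^*$ (taking $i$ with $i^2 = -1$); the relevant character values are $\psi_{-1}(v_i) = -\delta(i)$, $\nu_\gamma(v_i) = 0$, and $\eta_\beta(v_i) = -2\beta(i)$. In both cases, multiplying the character value by $\tfrac{q-1}{2}$ should produce exactly the claimed right-hand sides, once one reconciles $\delta(-1)$ with $\phi(-1)$ (note $\delta(-1) = 1$ iff $d_{-1} \in PSL(2,q)$ iff $-1$ is a square iff $\phi(-1)=1$, so $\delta(-1) = \phi(-1)$) and observes $\gamma(-1)$, $\beta(i)$ are $\pm 1$-valued so the factor-of-two bookkeeping matches: for instance $\tfrac{q-1}{2}\cdot 2\gamma(-1) = \gamma(-1)(q-1)$, and $\tfrac{q-1}{2}\cdot(-2\beta(i)) = -\beta(i)(q-1)$, while for $\psi_{-1}$ one must check that both the $q\equiv 1$ value $\tfrac{q-1}{2}\phi(-1)$ and the $q\equiv 3$ value $-\tfrac{q-1}{2}\delta(i)$ coincide with $\phi(-1)(q-1)$ — and this is where I would need to be careful.

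The main obstacle is precisely this last reconciliation for $\psi_{-1}$: the naive count gives $\tfrac{q-1}{2}$ times a $\pm 1$-value, which is only half of $\phi(-1)(q-1)$, so the parametrization above must be undercounting by a factor of $2$ — meaning not all of the derangements/elements sending $0 \leftrightarrow \infty$ are accounted for, or the sum should genuinely run over all $\lambda \in \mathbb{F}_q^*$ without dividing by $2$ because $g_\lambda$ and $g_{-\lambda}$, while equal in $PGL(2,q)$, should each be counted, or (more likely) I have miscounted the number of such elements. I would resolve this by carefully recounting: the stabilizer in $PGL(2,q)$ of the ordered pair $(0,\infty)$ has order $q-1$, so by $2$-transitivity there are exactly $q-1$ elements sending $0$ to $\infty$ and $\infty$ to $0$ — not $(q-1)/2$. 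The discrepancy with Case 3 of Lemma~\ref{lemma1} is that there we counted only \emph{derangements in $PSL(2,q)$}, a genuinely smaller set. So the correct count here is $q-1$ elements (all of $PGL$, no derangement restriction, no $PSL$ restriction), indexed bijectively by $\lambda \in \mathbb{F}_q^*$ via $g_\lambda$ after fixing that $g_\lambda \ne g_{-\lambda}$ as matrices but determining the same scalar class — one must pick a consistent set of $q-1$ matrix representatives. With the count $q-1$ in hand, each character sum is $(q-1)$ times the appropriate single character value, and all three claimed identities follow immediately from Table~\ref{table1} together with $\delta(-1) = \phi(-1)$ and, for $q\equiv 3\pmod 4$, the identity $\delta(i) = \phi(-1)$ as well (since $v_i \in PSL(2,q)$ iff $\det$ is a square iff $-i^{1+q} = i^2 = -1$ is... — I would verify this sign chase carefully, as it is the one genuinely delicate point). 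Finally I would double-check the $\gamma$ and $\beta$ cases are consistent with this corrected count: $(q-1)\cdot\tfrac{\nu_\gamma(d_{-1})}{?}$ — but with count $q-1$ and value $\nu_\gamma(v_i) = 0$ for $q\equiv 3$, we'd get $0$, contradicting the claim $\gamma(-1)(q-1)$; so in fact the class of $g_\lambda$ depends on $\lambda$ and the elements split between classes, which means I must instead track, for each $\lambda$, whether $g_\lambda$ is conjugate to the "$PSL$" representative or the "$PGL\setminus PSL$" representative of its type, and sum accordingly — the character values at these two differ exactly by the sign $\delta$. This refined case analysis, sorting the $q-1$ values of $\lambda$ by whether $\lambda$ (equivalently $\det g_\lambda = 1 = $ a square always, hmm) — the real parameter is whether the pair is $PSL$-conjugate to $(0,\infty)\to(\infty,0)$ standard form — is the crux, and I expect it to occupy the bulk of the proof.
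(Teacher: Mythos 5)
Your proposal has a genuine gap, and it is exactly the point you yourself flag at the end and leave unresolved. The sum in Lemma \ref{lemma8} runs over all elements of $PGL(2,q)$ sending $0\mapsto\infty$ and $\infty\mapsto 0$, and there are $q-1$ of them (as you correctly recount via the stabilizer of the ordered pair). But the matrices $\left(\begin{smallmatrix}0&\lambda\\-\lambda^{-1}&0\end{smallmatrix}\right)$ you parametrize with have determinant $1$, so up to the $\lambda\leftrightarrow-\lambda$ identification they account only for the $(q-1)/2$ such elements lying in $PSL(2,q)$; the other half, in $PGL(2,q)\setminus PSL(2,q)$, are missed entirely. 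Because of this, your conjugacy analysis (characteristic polynomial $t^2+1$ for all $\lambda$, hence a single class $d_{-1}$ or $v_i$ according to $q\bmod 4$) is wrong for the full set. The correct representatives are $g_\lambda=\left(\begin{smallmatrix}0&\lambda\\1&0\end{smallmatrix}\right)$, $\lambda\in\mathbb{F}_q^*$, with characteristic polynomial $t^2-\lambda$: for every odd $q$, $g_\lambda$ lies in the class of $d_{-1}$ when $\lambda$ is a square (eigenvalue ratio $-1$) and in the class of $v_i$ when $\lambda$ is a nonsquare (eigenvalues $r$ with $r^q=-r$, i.e.\ $r\in i\mathbb{F}_q^*$), so the $q-1$ elements split evenly, $(q-1)/2$ in each class, independently of $q\bmod 4$. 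This even split is the whole content of the paper's proof: each sum equals $\frac{q-1}{2}\bigl[\chi(d_{-1})+\chi(v_i)\bigr]$, which with $\delta(-1)=\phi(-1)$ and $\delta(i)=-\phi(-1)$ gives all three formulas at once (e.g.\ $\frac{q-1}{2}(2\gamma(-1)+0)=\gamma(-1)(q-1)$ and $\frac{q-1}{2}(\phi(-1)-\delta(i))=\phi(-1)(q-1)$).

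You do notice the inconsistency ($\nu_\gamma$ would give $0$ when $q\equiv 3\pmod 4$ under your single-class assumption), and you correctly diagnose that the class of $g_\lambda$ must depend on $\lambda$, but you stop there, saying this refined case analysis is ``the crux'' and would ``occupy the bulk of the proof.'' It is indeed the crux, and it is not carried out; nor are the needed sign evaluations $\delta(-1)=\phi(-1)$ and $\delta(i)=-\phi(-1)$ established (your sign chase for $\delta(i)$ trails off). So the proposal identifies the right ingredients but does not constitute a proof of the stated identities.
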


\begin{proof} 
The elements in $PGL(2,q)$ sending  $0$ to $\infty$ and $\infty$ to $0$ are of the form,
\[
g_{\lambda} :=\left( \begin{array}{cc}
0 & \lambda\\
1 & 0
\end{array}\right) \quad \mbox{ with }\lambda \in \mathbb{F}_q^*.
\]
Note that the characteristic polynomial of $g_{\lambda}$ is $p_{\lambda}(t) := t^2 - \lambda$. 

To evaluate the character sums in this lemma we need to know to which conjugacy classes the elements $g_{\lambda}$ belong.  

First, recall that the eigenvalues of $g_{\lambda}$ are defined up to multiplication by an element of $\mathbb{F}_q^*$. Now, if $\lambda$ is a square in $\mathbb{F}_q^*$ then $p_{\lambda}(t)$ is reducible and $g_{\lambda}$ has eigenvalues $\pm \sqrt{\lambda} \in \mathbb{F}_q^*$. This implies  that $g_{\lambda}$ lies in the conjugacy class $d_{-1}$ whenever $\lambda$ is a square. On the other hand, if $\lambda$ is not a square the roots of $p_{\lambda}(t)$ lie on $\mathbb{F}_{q^2}^*$ and they correspond to elements of order $2$ in $\mathbb{F}_{q^2}^*/\mathbb{F}_{q}^*$. Therefore, whenever $\lambda$ is not a square we see that $g_{\lambda}$ lies on the conjugacy class $v_i$. 

Since there are equal number of squares and nonsquares in $\F_q^*$, the lemma follows from the character table of $PGL(2,q)$. 
\end{proof}

\begin{lemma}\label{lemma9}
For every $\gamma \in \Gamma$ and $d \in \mathbb{F}_q^*\setminus \{1\}$ we have
\[
\sum_{0^g = \infty, 1^g=d} \nu_{\gamma}(g^{-1}) = q P_{\gamma}(2d-1) .
\]
\end{lemma}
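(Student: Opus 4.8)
The plan is to combine the standard ``trace formula'' for the principal series character $\nu_{\gamma}$ with the sharp $3$-transitivity of $PGL(2,q)$, and then to recognise the resulting sum over $\F_q$ as a Legendre sum by a single substitution. First, I would record the trace formula. If $h\in PGL(2,q)$ fixes a point $p\in PG(1,q)$, then $h$ acts on the line $p\subseteq\F_q^2$ by a scalar $\mu_p$ and on the quotient $\F_q^2/p$ by a scalar $\mu_p'$; the ratio $\mu_p/\mu_p'\in\F_q^*$ is unchanged when the matrix representing $h$ is rescaled, so $\gamma_p(h):=\gamma(\mu_p/\mu_p')$ is well defined, and $\gamma_p(h^{-1})=\gamma_p(h)^{-1}$. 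Running through the six types of conjugacy classes of $PGL(2,q)$ against Table \ref{table1} -- a split element $d_x$ fixes exactly its two eigenlines, which contribute $\gamma(x)$ and $\gamma(x^{-1})$; a non-identity unipotent element fixes exactly one line, contributing $\gamma(1)=1$; an element of type $v_r$ fixes no line; the identity fixes all $q+1$ lines -- one checks the identity $\nu_{\gamma}(h)=\sum_{p^h=p}\gamma_p(h)$ for all $h\in PGL(2,q)$. (This is simply the realization of $\nu_{\gamma}$ as a character induced from a point stabilizer.)

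Next, I would turn the character sum into a sum over fixed points. Put $S=\{g\in PGL(2,q):0^g=\infty,\ 1^g=d\}$. Applying the trace formula to $g^{-1}$ and interchanging the two summations,
\[
\sum_{0^g=\infty,\,1^g=d}\nu_{\gamma}(g^{-1})=\sum_{p\in PG(1,q)}\ \sum_{\substack{g\in S\\ p^g=p}}\gamma_p(g^{-1}).
\]
No element of $S$ fixes any of $0,1,\infty,d$: fixing one of these would force two of the distinct points $0,1,d$ to have a common image under a bijection, impossible since $d\neq 0,1$. For $p\in\F_q\setminus\{0,1,d\}$ the ordered triples $(0,1,p)$ and $(\infty,d,p)$ each consist of three distinct points, so by sharp $3$-transitivity there is a unique $g_p\in S$ fixing $p$, and the sum collapses to $\sum_{p\in\F_q\setminus\{0,1,d\}}\gamma_p(g_p^{-1})$.

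Finally, I would make everything explicit. Solving the three incidence conditions $0^{g_p}=\infty$, $1^{g_p}=d$, $p^{g_p}=p$ gives the matrix representative
\[
g_p=\begin{pmatrix}0 & p(p-d)\\ 1-p & d-p^2\end{pmatrix},
\]
for which $(1,p)\,g_p=p(1-p)\,(1,p)$ and $\det g_p=-p(1-p)(p-d)$, so $\mu_p=p(1-p)$, $\mu_p'=d-p$, and $\gamma_p(g_p^{-1})=\gamma\!\left(\frac{d-p}{p(1-p)}\right)$. For a fixed $x\in\F_q^*$, the equation $\frac{d-p}{p(1-p)}=x$ is the quadratic $xp^2-(x+1)p+d=0$ in $p$, whose discriminant is $(x+1)^2-4dx=x^2-2(2d-1)x+1$ and none of whose roots is $0,1$ or $d$ (since $d\neq0,1$ and $x\neq0$); hence, as $p$ ranges over $\F_q\setminus\{0,1,d\}$, each $x\in\F_q^*$ is attained exactly $1+\phi\!\left(x^2-2(2d-1)x+1\right)$ times. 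Therefore
\[
\sum_{0^g=\infty,\,1^g=d}\nu_{\gamma}(g^{-1})=\sum_{x\in\F_q^*}\gamma(x)\left(1+\phi\!\left(x^2-2(2d-1)x+1\right)\right),
\]
and since $\gamma$ is nontrivial we have $\sum_{x\in\F_q^*}\gamma(x)=0$, so the right-hand side equals $\sum_{x\in\F_q^*}\gamma(x)\phi\!\left(x^2-2(2d-1)x+1\right)=qP_{\gamma}(2d-1)$ by Definition \ref{def1}, as claimed. The steps needing the most care are the trace formula of the first paragraph (a short case check against the character table, or a citation to the construction of the principal series) and the exclusion of the points $0,1,\infty,d$ in the second paragraph, which is exactly what lets sharp $3$-transitivity be applied with no exceptional cases; once $g_p$ has been written down, the substitution $x=\frac{d-p}{p(1-p)}$ reproduces precisely the quadratic appearing in Definition \ref{def1}, and there is no remaining obstacle.
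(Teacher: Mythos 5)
Your proof is correct, but it follows a genuinely different route from the paper's. The paper parametrizes the elements sending $0\mapsto\infty$, $1\mapsto d$ explicitly as matrices $g_\lambda$, determines for each split class $d_x$ how many such elements it contains by counting roots of a quadratic in the auxiliary parameter $\alpha$ (with special treatment of $x=1$, which is the unipotent class, and $x=-1$, where a factor $\tfrac12$ enters, and with the identification of $x$ with $x^{-1}$), and then pairs these counts with the character-table values $\gamma(x)+\gamma(x^{-1})$. You instead use the realization of $\nu_\gamma$ as the character induced from a point stabilizer, i.e.\ the fixed-point formula $\nu_\gamma(h)=\sum_{p^h=p}\gamma(\mu_p/\mu_p')$, interchange the order of summation, and invoke sharp $3$-transitivity to see that each fixed point $p\notin\{0,1,\infty,d\}$ supports a unique element $g_p$ of the set in question. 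Your explicit data check out: $(1,p)g_p=p(1-p)(1,p)$, $\det g_p=-p(1-p)(p-d)$, hence the ratio $\frac{d-p}{p(1-p)}$ for $g_p^{-1}$, and the fibers of the substitution $x=\frac{d-p}{p(1-p)}$ have size $1+\phi\bigl(x^2-2(2d-1)x+1\bigr)$ with no roots of the quadratic at $p=0,1,d$ (consistency check: $\sum_{x\in\F_q^*}\bigl(1+\phi(x^2-2(2d-1)x+1)\bigr)=q-3$, the number of admissible $p$). What your route buys is uniform bookkeeping: all fixed points are treated alike, so the special cases $x=\pm1$ and the $\{x,x^{-1}\}$ double-counting in the paper's count over $\lambda$ never arise, the Legendre sum of Definition \ref{def1} appears after a single substitution, and you evaluate $\sum\nu_\gamma(g^{-1})$ directly, so the appeal to Lemma \ref{lemma21} at the end of the paper's proof is not needed. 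The price is the trace formula itself, which you must justify either by the class-function check against Table \ref{table1} that you sketch (legitimate, since the fixed-point sum is visibly conjugation-invariant) or by citing the construction of the principal series as an induced character, whereas the paper's argument uses only the character table values.
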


\begin{proof} 
The elements in $PGL(2,q)$ sending $0$ to $\infty$ and $1$ to $d$ are of the form,
\[
g_{\lambda} := \left( \begin{array}{cc}
0 & \alpha \lambda\\
\alpha &\alpha(d - \lambda)
\end{array}\right) \quad \mbox{ with }\lambda, \alpha \in \mathbb{F}_q^*. 
\]

To evaluate the sum in this lemma we need to know to which conjugacy classes these elements belongs. However, we need to do this just for those elements which are not derangements because $\nu_{\gamma}(g)=0$ if $g$ is a derangement.

Note that different values of $\alpha$ correspond to the same element $g_{\lambda}$ in $PGL(2,q)$. Indeed, as was remarked earlier the eigenvalues of $g_{\lambda}$ are defined up to scalar multiplication. 

The characteristic polynomial of $g_{\lambda}$ is $p_{\lambda}(t) := t^2 - \alpha(d - \lambda) t - \alpha^2 \lambda$ and its eigenvalues are,
\[
 \alpha \left( \frac{(d-\lambda) \pm \sqrt{(d - \lambda)^2 + 4\lambda})}{2} \right).
\]
Thus, if $\sqrt{(d - \lambda)^2 + 4\lambda} \in \mathbb{F}_q^*$ then there exists $\alpha \in \mathbb{F}_q^*$ such that the eigenvalues of $g_{\lambda}$ are $\{1,x\}$ for some $x \in \mathbb{F}_q^*$. This implies that $g_{\lambda}$ is contained in the same conjugacy class as $d_x$ (see Section \ref{psl_ct}).  Here, we assume that $d_x$ with $x=1$ corresponds to the element $u \in PGL(2,q)$ defined in Section \ref{psl_ct}.

For a fixed $d \in \mathbb{F}_q^*\setminus \{1\}$ and $x \in \mathbb{F}_q^*$ we want to know for how many $\lambda \in \mathbb{F}_q^*$  there exists some $\alpha$ such that $g_{\lambda}$ has eigenvalues $\{1,x\}$. From the above analysis it is clear that  $d,x, \alpha$ and $\lambda$ must satisfy the equation below: 
\[
p_{\lambda}(t)= t^2 - \alpha(d - \lambda) t - \alpha^2 \lambda = (t-x)(t-1)=t^2 - (x+1)t +x.
\]
This implies that $\alpha$ satisfies the following quadratic equation,
\begin{equation*}
d\alpha^2 - (x + 1) \alpha  + x = 0. 
\end{equation*}

Therefore, given $x \in \mathbb{F}_q^*$ and $d \in \mathbb{F}_q^*\setminus \{1\}$, the number of values of  $\lambda \in \mathbb{F}_q^*$  such that $g_{\lambda}$ is  conjugate to $d_x$ is equal to 
\[
1 + \phi((x+1)^2 - 4xd)  \mbox{ if }x \neq -1  \quad \mbox{and} \quad
(1 + \phi((x+1)^2 - 4xd))/2  \mbox{ if }x = -1.
\]

Now using the above remarks and the character table of $PGL(2,q)$ we get
\begin{eqnarray}\label{ecu8}
\sum_{0^g = \infty, 1^g=d} \nu_{\gamma}(g)  & = & (1+ \phi(1-d))\gamma(1)+ \left( \frac{1 + \phi(d)}{2}\right) (2\gamma(-1)) \\
&    & + \frac{1}{2} \sum_{\substack{x \neq 1, -1 \\ x \in \mathbb{F}_q^*} } (1 + \phi((x+1)^2 - 4xd) )  (\gamma(x) + \gamma(x^{-1})) \nonumber
\end{eqnarray}
where the first two terms in the right hand side of Equation (\ref{ecu8}) corresponds to $x=1$ and $x=-1$. Furthermore, note that we have included a factor $\frac{1}{2}$ in front of the last expression in Equation (\ref{ecu8}). This occurs because  every element $g_{\lambda}$ having eigenvalues $\{1,x\}$ also has eigenvalues $\{1, x^{-1}\}$. Hence, given $d \in \mathbb{F}_q^*\setminus \{1\}$, the elements $x$ and $x^{-1}$ are related to the same values of $\lambda$. 
Simplifying the right hand side of Equation (\ref{ecu8}),
\begin{eqnarray*}
\sum_{0^g = \infty, 1^g=d} \nu_{\gamma}(g) & = & \sum_{x \in \mathbb{F}_q^*} \gamma(x) \phi(x^2-2(2d-1)x+1)\\
& = & q P_{\gamma}(2d-1).
\end{eqnarray*}
Finally, applying basic properties of characters and Lemma \ref{lemma21} we obtain
\[
\sum_{0^g = \infty, 1^g=d} \nu_{\gamma}(g^{-1}) = \overline{\sum_{0^g = \infty, 1^g=d} \nu_{\gamma}(g) } =q P_{\gamma^{-1}}(2d-1)  = q P_{\gamma}(2d-1). 
\]
The proof is now complete.
\end{proof}

\begin{lemma}\label{lemma10}
For every $\beta \in B$ and $d \in \mathbb{F}_q^*\setminus \{1\}$ we have,
\[
\sum_{0^g = \infty, 1^g=d} \eta_{\beta}(g^{-1}) = - q R_{\beta}(2d-1). 
\]
\end{lemma}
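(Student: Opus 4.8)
The plan is to mirror the proof of Lemma \ref{lemma9}: parametrize the elements of $PGL(2,q)$ sending $0$ to $\infty$ and $1$ to $d$ and sort them by conjugacy class. As exhibited in the proof of Lemma \ref{lemma9}, every such element has the form $g_\lambda$, $\lambda\in\mathbb{F}_q^*$, the scalar $\alpha$ being an irrelevant projective factor which I take to be $1$; then $g_\lambda$ has characteristic polynomial $t^2-(d-\lambda)t-\lambda$ and discriminant $\Delta(\lambda):=(d-\lambda)^2+4\lambda$. Reading off the character table of $PGL(2,q)$, the cuspidal character $\eta_\beta$ vanishes on $I$ and on all of the split semisimple classes $d_x$, $d_{-1}$, so the only $g_\lambda$ that contribute to $\sum_{0^g=\infty,\,1^g=d}\eta_\beta(g)$ are the unipotent ones ($\Delta(\lambda)=0$; class $u$, with $\eta_\beta(u)=-1$) and the elliptic ones ($\Delta(\lambda)$ a nonsquare; class $v_\rho$, with $\eta_\beta(v_\rho)=-\beta(\rho)-\beta(\rho^q)$).

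The device that keeps the bookkeeping clean is a bijection on the elliptic locus. If $g_\lambda$ is elliptic, its eigenvalues form a pair $\{\rho,\rho^q\}$ with $\rho\in\mathbb{F}_{q^2}\setminus\mathbb{F}_q$, $\rho+\rho^q=d-\lambda$ and $\rho^{1+q}=-\lambda$; eliminating $\lambda$ gives the relation $\rho+\rho^q-\rho^{1+q}=d$, and conversely any $\rho\in\mathbb{F}_{q^2}\setminus\mathbb{F}_q$ satisfying it arises from $\lambda:=-\rho^{1+q}\in\mathbb{F}_q^*$. Thus $\{\lambda\in\mathbb{F}_q^*:g_\lambda\text{ elliptic}\}$ is in bijection with the set of pairs $\{\rho,\rho^q\}$, $\rho\in\mathbb{F}_{q^2}\setminus\mathbb{F}_q$, with $\rho+\rho^q-\rho^{1+q}=d$, compatibly with $\eta_\beta$. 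On the other hand, the $\rho\in\mathbb{F}_q^*$ with $\rho+\rho^q-\rho^{1+q}=d$ are the roots of $\rho^2-2\rho+d=0$ (none equal to $0$, since $d\neq 0$), of which there are $1+\phi(1-d)$ — exactly the number of $\lambda$ with $\Delta(\lambda)=0$ — each contributing $\beta(\rho)=1$ because $\beta$ is trivial on $\mathbb{F}_q^*$. Combining these facts with $\eta_\beta(u)=-1$ collapses the whole sum to
\[
\sum_{0^g=\infty,\,1^g=d}\eta_\beta(g)=-\sum_{\substack{\rho\in\mathbb{F}_{q^2}^*\\ \rho+\rho^q-\rho^{1+q}=d}}\beta(\rho).
\]

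To evaluate the right-hand side I count $\rho$ coset by coset. For fixed $r\in\mathbb{F}_{q^2}^*$, writing $\rho=cr$ with $c\in\mathbb{F}_q^*$ turns the condition into $c^2 r^{1+q}-c(r+r^q)+d=0$, a quadratic in $c$ with discriminant $(r+r^q)^2-4dr^{1+q}$ and nonzero leading coefficient; since $c=0$ is never a root, the number of admissible $c$ is $1+\phi\big((r+r^q)^2-4dr^{1+q}\big)$, for every $r$. As $\beta$ and this $\phi$-term are constant on the cosets $r\mathbb{F}_q^*$ (each of size $q-1$) and $\sum_{r\in\mathbb{F}_{q^2}^*}\beta(r)=0$, this gives
\[
\sum_{\substack{\rho\in\mathbb{F}_{q^2}^*\\ \rho+\rho^q-\rho^{1+q}=d}}\beta(\rho)=\frac{1}{q-1}\sum_{r\in\mathbb{F}_{q^2}^*}\beta(r)\,\phi\big((r+r^q)^2-4dr^{1+q}\big)=q\,R_\beta(2d-1),
\]
the last equality being Definition \ref{def1_1} with $a=2d-1$ (so $2(a+1)=4d$). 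Hence $\sum_{0^g=\infty,\,1^g=d}\eta_\beta(g)=-qR_\beta(2d-1)$, and since $R_\beta$ is real-valued (Lemma \ref{lemma21}), passing to complex conjugates gives $\sum_{0^g=\infty,\,1^g=d}\eta_\beta(g^{-1})=\overline{\sum_{0^g=\infty,\,1^g=d}\eta_\beta(g)}=-qR_\beta(2d-1)$, as claimed.

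The only delicate point — the analogue of the $x\leftrightarrow x^{-1}$ and $x=-1$ subtleties in the proof of Lemma \ref{lemma9} — is the interaction with conjugacy classes: eigenvalues of elements of $PGL(2,q)$ are only defined up to $\mathbb{F}_q^*$-scaling, and an elliptic class corresponds to a pair of inverse cosets of $\mathbb{F}_q^*$ except for the self-inverse class $v_i$, where summing over classes would force an awkward factor $\tfrac12$. Counting the eigenvalues $\rho$ directly, rather than the classes $v_\rho$, is precisely what makes the count $1+\phi((r+r^q)^2-4dr^{1+q})$ uniform over all cosets and removes that difficulty; the residual checks are merely that $g_\lambda$ is never central (true because $0^{g_\lambda}=\infty$) and that $\lambda=-\rho^{1+q}\neq 0$ (true because $\rho\neq 0$).
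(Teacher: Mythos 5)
Your proof is correct and follows essentially the same route as the paper's: parametrize the $q-1$ elements sending $0\to\infty$, $1\to d$, sort them by conjugacy class via the characteristic polynomial, and recognize the resulting $\phi$-weighted quadratic count as $qR_{\beta}(2d-1)$ through Definition \ref{def1_1}, finishing with Lemma \ref{lemma21}. Your only departure is organizational: summing over eigenvalues $\rho\in\mathbb{F}_{q^2}^*$ satisfying $\rho+\rho^q-\rho^{1+q}=d$ absorbs the unipotent contribution (since $\beta$ is trivial on $\mathbb{F}_q^*$ and $\eta_{\beta}(u)=-1$) and avoids the paper's half-weight bookkeeping for the self-paired class $v_i$, but the underlying count and final identification are the same.
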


\begin{proof} 
Recall that all the elements in $PGL(2,q)$ sending $0$ to $\infty$ and $1$ to $d$ take the form,
\[
g_{\lambda} := \left( \begin{array}{cc}
0 & \alpha\lambda\\
\alpha & \alpha(d - \lambda)
\end{array}\right) \quad \mbox{ with }\lambda, \alpha \in \mathbb{F}_q^*. 
\]

To evaluate the sum in this lemma we have to know to which conjugacy classes these elements belong.  However, since $\eta_{\beta}(g)=0$ if $g$ has two fixed points,  we will pay attention to  derangements and the elements fixing one point only (see Section \ref{psl_ct}).

We know that if $r \in \mathbb{F}_{q^2}^* \setminus \mathbb{F}_{q}^*$ is an eigenvalue of $g_{\lambda}$ then $g_{\lambda}$ is a derangement with eigenvalues $\{r, r^q\}$ contained in the same conjugacy class as $v_r$. On the other hand, if $r \in \mathbb{F}_{q}^*$ is the only eigenvalue of $g_{\lambda}$ then this implies that $g_{\lambda}$ has exactly one fixed point and it is conjugated to $u$. In fact, when $r \in \F_q^*$ every element of the form $v_r$ is conjugated to $u$. 

Fix $r \in \mathbb{F}_{q^2}^*$. We want to know for how many values of $\lambda \in \mathbb{F}_q^*$ there exists $\alpha$ such that $g_{\lambda}$ has eigenvalues $\{r, r^q\}$. From the characteristic polynomial of $g_{\lambda}$ the following equation is obtained
\[
 t^2 - \alpha(d - \lambda) t - \alpha^2 \lambda =t^2 - (r + r^q)t +r^{q+1}, 
\]
which implies that $\alpha \in \mathbb{F}_q^*$ must satisfy the quadratic equation below
\begin{equation}\label{ecu9}
d\alpha^2 - (r + r^q) \alpha + r^{q+1}=0.
\end{equation}

Distinct  solutions of Equation (\ref{ecu9}) generate distinct values of $\lambda$ unless $r \in i \mathbb{F}_q$ where $i$ is an element of $\mathbb{F}_{q^2}^* \setminus \mathbb{F}_{q}^*$ such that $i^2 \in \mathbb{F}_{q}^*$. Hence, given $r \in \mathbb{F}_{q^2}^*$ and $d \in \mathbb{F}_{q}^*\setminus \{1\}$, the number of  $\lambda \in \mathbb{F}_q^*$  such that  $g_{\lambda}$ is conjugated to $v_r$ is equal to: 
\[
1 + \phi((r +r^q)^2 - 4dr^{q+1})  \mbox{ if } r \in \mathbb{F}_{q^2}^*\setminus i\mathbb{F}_{q}^* \quad \mbox{and} \quad
(1 + \phi((r+r^q)^2 - 4dr^{q+1}))/2  \mbox{ if }r \in i\mathbb{F}_{q}^*.
\]

Moreover, note that every element $g_{\lambda}$ having eigenvalues $\{r,r^q\}$ also has eigenvalues $\{ar, (ar)^q\}$ for any $a  \in \mathbb{F}_{q}^*$. Thus,  $r$ and $ar$ are related to the same values of $\lambda$ for every $a \in \F_q^*$. Therefore,
\begin{eqnarray*}
\sum_{0^g = \infty, 1^g=d} \eta_{\beta}(g^{-1})  & = &  \frac{1}{q-1} \sum_{r \in \mathbb{F}_q^*} (1 + \phi((r +r^q)^2 - 4dr^{q+1}))(-\beta(1))  \\
&  &  + \frac{1}{q-1} \sum_{r \in i\mathbb{F}_q^*}  \left(\frac{1 + \phi((r +r^q)^2 - 4dr^{q+1})}{2} \right) (-2\beta(i)) \\
&  &  + \frac{1}{2(q-1)} \sum_{r \in \mathbb{F}_{q^2}^*\setminus \{\mathbb{F}_{q}^* , i \mathbb{F}_q^*\} }  (1 + \phi((r +r^q)^2 - 4dr^{q+1})) (-\beta(r) - \beta(r^q)) \\
& = & \frac{1}{2(q-1)} \sum_{r \in \mathbb{F}_{q^2}^*} \phi((r +r^q)^2 - 4dr^{q+1}) (-2\beta(r) ) \\
& = &  -\frac{1}{q-1} \sum_{r \in \mathbb{F}_{q^2}^*} \phi((r +r^q)^2 - 4dr^{q+1}) \beta(r)
\end{eqnarray*}
Now, the lemma follows from Definition \ref{def1_1} and Lemma \ref{lemma21}.

\end{proof}

\begin{lemma}\label{lemma11}
For every $d \in \mathbb{F}_q^*\setminus \{1\}$ we have,
\[
\sum_{0^g = \infty, 1^g=d} \psi_{-1}(g) =  q P_{\phi}(2d-1) .
\]
\end{lemma}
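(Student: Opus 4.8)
The plan is to imitate the proof of Lemma~\ref{lemma9} almost verbatim, since the set of elements $g_\lambda\in PGL(2,q)$ sending $0\mapsto\infty$ and $1\mapsto d$ is exactly the same family
\[
g_{\lambda} := \left( \begin{array}{cc}
0 & \alpha \lambda\\
\alpha & \alpha(d - \lambda)
\end{array}\right),\qquad \lambda,\alpha\in\mathbb{F}_q^*,
\]
and the only thing that changes is which character of $PGL(2,q)$ we evaluate. Since $\psi_{-1}(g)=0$ precisely when $g$ is a derangement (the table shows $\psi_{-1}$ vanishes on $v_r$, $v_i$ only via the $\delta$-factor, so actually one must be careful—$\psi_{-1}(v_r)=-\delta(r)\neq0$), I should instead observe that $\psi_{-1}=\psi_1\cdot\lambda_{-1}$ as a product of characters, so $\psi_{-1}(g)=\lambda_{-1}(g)\psi_1(g)$ for every $g$. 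First I would record the conjugacy-class count already established in the proof of Lemma~\ref{lemma9}: for fixed $d\neq0,1$ and fixed $x\in\mathbb{F}_q^*$, the number of admissible $\lambda$ for which $g_\lambda$ is conjugate to $d_x$ (with $d_1:=u$) is $1+\phi((x+1)^2-4xd)$ when $x\neq-1$, and half that when $x=-1$; and $g_\lambda$ is a derangement in the remaining cases, contributing via $\psi_1=0$ on derangements... except $\psi_1(v_r)=-1\neq0$, so derangements do contribute. Let me reorganize.

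The cleaner route: split the sum $\sum_{0^g=\infty,1^g=d}\psi_{-1}(g)$ over the conjugacy classes $u$, $d_x$ ($x\neq\pm1$), $d_{-1}$, $v_r$, $v_i$ exactly as the class-count analysis in Lemma~\ref{lemma9}/\ref{lemma10} provides, using $\psi_{-1}(g)=\lambda_{-1}(g)\psi_1(g)$ and reading both $\lambda_{-1}$ and $\psi_1$ off Table~\ref{table1}. On the classes $d_x$ one has $\psi_{-1}(d_x)=\delta(x)$; on $d_{-1}$, $\psi_{-1}=\delta(-1)$; on $u$, $\psi_{-1}(u)=0$; on $v_r$, $\psi_{-1}(v_r)=-\delta(r)$; on $v_i$, $\psi_{-1}(v_i)=-\delta(i)$. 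Now I would assemble the full sum over all $r\in\mathbb{F}_{q^2}^*$ of $\phi$-terms weighted by $\psi_{-1}$-values, mirroring the telescoping done in Lemma~\ref{lemma10}, and note that the contribution of $r\in\mathbb{F}_q^*$ (the $u$-class) is killed because $\psi_{-1}(u)=0$, while for $r\notin\mathbb{F}_q^*$ one has $\delta(r)=\phi(N(r))$ where $N(r)=r^{1+q}$ is the norm, and $\delta(x)=\phi(x)$ for $x\in\mathbb{F}_q^*$. Thus every $\psi_{-1}$-value in sight becomes a power of $\phi$ applied to a norm, and one should be able to collapse the whole expression to $\tfrac{1}{2}\sum_{x\in\mathbb{F}_q^*}\phi(x)\,\phi((x+1)^2-4xd)\cdot(\text{something})$. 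Carrying out the substitutions $\phi((x+1)^2-4xd)=\phi(x)\phi(x+x^{-1}+2-4d)$ after dividing by $\phi(x)$, one recognizes, exactly as at the end of the proof of Lemma~\ref{lemma9}, the sum $\sum_{x\in\mathbb{F}_q^*}\phi(x)\phi(x^2-2(2d-1)x+1)=q\,P_{\phi}(2d-1)$ by Definition~\ref{def1} with $\gamma=\phi$.

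Concretely the key steps in order are: (1) write $\psi_{-1}=\psi_1\otimes\lambda_{-1}$ and record $\psi_{-1}$ on each class from Table~\ref{table1}; (2) import the $\lambda$-counting lemma from the proof of Lemma~\ref{lemma9} verbatim — for each target class the number of admissible $\lambda$ is $1+\phi(\text{discriminant})$, halved on the self-paired classes $d_{-1}$ and $v_i$; (3) because $\psi_{-1}$ is constant on each relevant class, plug in and get a single sum over $x\in\mathbb{F}_q^*$ (with $d_x$ and $d_{x^{-1}}$ identified, hence a factor $\tfrac12$) plus a single sum over $r\in\mathbb{F}_{q^2}^*\setminus\mathbb{F}_q^*$; observe the $\psi_{-1}$-value $\phi$ of the relevant eigenvalue-datum makes the ``$1+$'' part of each count drop out because $\sum\phi=0$ over a full multiplicative group; (4) simplify the surviving $\phi$-weighted sum by the same algebra as in Lemma~\ref{lemma9} to reach $\sum_{x\in\mathbb{F}_q^*}\phi(x^2-2(2d-1)x+1)=qP_\phi(2d-1)$; (5) note $P_\phi$ is real (Lemma~\ref{lemma21}) so the same value results for $\psi_{-1}(g^{-1})$, though the statement here is for $\psi_{-1}(g)$, so this last remark is only a sanity check against the use of this lemma downstream. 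The main obstacle is bookkeeping: one must get right which classes the derangement $g_\lambda$'s fall into (the $v_r$ versus $v_i$ split, and the $d_{-1}$ self-pairing) and track the factors of $\tfrac12$ so that the ``$1$'' in each $1+\phi(\cdot)$ count genuinely cancels; once the sum is reduced to $\sum_{x}\phi(x)\phi((x+1)^2-4xd)$ the rest is identical to Corollary~\ref{usefulcor}'s computation and to the end of Lemma~\ref{lemma9}, and in fact one could alternatively quote Lemma~\ref{lemma11}'s conclusion as "the same computation as Lemma~\ref{lemma9} with $\gamma$ replaced by $\phi$, using $\psi_{-1}=\psi_1\lambda_{-1}$ and $\delta=\phi\circ N$."
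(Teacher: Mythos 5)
Your starting ingredients are fine: since $\delta(g)=\phi(\det g)$ for any matrix representative, one has $\psi_{-1}(d_x)=\phi(x)$, $\psi_{-1}(v_r)=-\phi(r^{1+q})$, $\psi_{-1}(u)=0$, and the $\lambda$-counts from the proofs of Lemmas \ref{lemma9} and \ref{lemma10} (with the halving at $d_{-1}$ and $v_i$) can indeed be imported. The genuine gap is in how you dispose of the derangement classes. In Lemma \ref{lemma9} the whole sum lives over the split classes because $\nu_\gamma$ vanishes on derangements; here $\psi_{-1}$ does \emph{not} vanish there, and mirroring Lemma \ref{lemma10} the derangement contribution is, up to boundary corrections, $-\tfrac{1}{2(q-1)}\sum_{r\in\F_{q^2}^*}\phi(r^{1+q})\,\phi\bigl((r+r^q)^2-4dr^{1+q}\bigr)=-\tfrac{q}{2}R_{\beta_0}(2d-1)$ with $\beta_0=\phi\circ N$ the quadratic character of $\F_{q^2}^*$ --- a Soto-Andrade--type sum over $\F_{q^2}$, not an $\F_q$-sum. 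The split classes contribute only $\tfrac{q}{2}P_\phi(2d-1)$ (the factor $\tfrac12$ comes from the $x\leftrightarrow x^{-1}$ pairing), so to reach $qP_\phi(2d-1)$ you still need the identity $R_{\phi\circ N}(2d-1)=-P_\phi(2d-1)$. That identity is true, but it requires its own proof (e.g.\ parametrizing $r$ by trace $s$ and norm $n$, where the fibre count is $1-\phi(s^2-4n)$, and then a further quadratic-character computation, or a Greene transformation beyond Lemma \ref{lemma12}); neither ``the same algebra as Lemma \ref{lemma9}'' nor Corollary \ref{usefulcor} supplies it, and your proposal never identifies this step. Two smaller inaccuracies: the ``$1+$'' parts do not die simply because $\sum\phi=0$ over a full group --- after the corrections at $x=\pm1$, $r\in\F_q^*$ and $r\in i\F_q^*$ the constant parts of the split and non-split pieces are $-\tfrac12$ and $+\tfrac12$ and cancel only against each other; and in your step (4) the factor $\phi(x)$ was dropped ($\sum_x\phi(x^2-2(2d-1)x+1)=-1$, whereas it is $\sum_x\phi(x)\phi(x^2-2(2d-1)x+1)$ that equals $qP_\phi(2d-1)$).

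For comparison, the paper's proof avoids any $\F_{q^2}$-sum: it reads $\psi_{-1}$ off the five-case formula (\ref{ecu17}) and counts, among the $q-1$ elements with $0^g=\infty$, $1^g=d$, how many lie in $PSL(2,q)$ and how many fix $0$, $1$ or $2$ points, using the derangement count of Meagher--Spiga together with the counts from Case 4 of Lemma \ref{lemma1}; the sum collapses to $2+\sum_{x\in\F_q^*}\phi((x+x^{-1})^2-4d)$, and Equation (\ref{ecu_extra1}) finishes. If you wish to keep your class-by-class route, you must add the missing lemma on $R_{\phi\circ N}$ (equivalently: note $\psi_{-1}(g_\lambda)=\phi(-\lambda)\,\phi\bigl((d-\lambda)^2+4\lambda\bigr)$ directly and prove the resulting quadratic-character identity); as written, the central step is asserted rather than proved.
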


\begin{proof}
From the character table of $PGL(2,q)$ it follows that
\begin{equation}\label{ecu17}
\psi_{-1}(g) =  \left\lbrace \begin{array}{ll}
  0, & \mbox{ if } g  \in u,\\
  1, & \mbox{ if }g \in d_x \mbox{ and } d_x \subset PSL(2,q),\\
  -1, & \mbox{ if }g \in d_x \mbox{ and } d_x \subset PGL(2,q) \setminus PSL(2,q),\\
  -1, & \mbox{ if }g \in v_r \mbox{ and } v_r \subset PSL(2,q),\\
    1, & \mbox{ if }g \in v_r \mbox{ and } v_r \subset PGL(2,q) \setminus PSL(2,q).\\  
\end{array}  \right.
\end{equation}

Thus, to evaluate the sum  $\sum_g \psi_{-1}(g)$  we need  to know: how many elements sending $0$ to $\infty$ and $1$ to $d$ belong to each of the five categories considered in (\ref{ecu17}). In fact, these counting problems follow from the proof of Case (4) of Lemma \ref{lemma1}.

For the sake of clarity, we recall some simple facts. There are $q-1$ elements in $PGL(2,q)$ sending $0$ to $\infty$ and $1$ to $d$, and half of them are in $PSL(2,q)$. It was proved by Meagher and Spiga  \cite{Karen1} that if $1-d$ is a square in $\mathbb{F}_q^*$ then  $(q-1)/2$ of these elements are derangements. On the other hand,  if $1-d$ is not a square then $(q+1)/2$ of these elements are derangements. 

First, assume that $1-d$ is a square.  We can divide the $(q-1)/2$ elements of $PSL(2,q)$ sending $0$ to $\infty$ and $1$ to $d$ into three categories:
\begin{itemize}
 \item $2$ fix just one point.
 \item $\displaystyle \frac{1}{4} \sum_{ x \in \mathbb{F}_q^*, x \neq 1, -1 } (1 + \phi((x + x^{-1})^2 - 4d))$ fix exactly two points. 
 \item $\displaystyle \frac{q-5}{4} - \frac{1}{4} \sum_{ x \in \mathbb{F}_q^*}  \phi((x + x^{-1})^2 - 4d)$ are derangements.
 \end{itemize} 

A similar analysis can be carried out when $1-d$ is not a square. Specifically,  from the $(q-1)/2$ elements of $PSL(2,q)$ sending $0$ to $\infty$ and $1$ to $d$, 
\begin{itemize}
 \item There are no elements fixing  exactly one point.
 \item $\displaystyle \frac{1}{4} \sum_{ x \in \mathbb{F}_q^*,  x \neq 1, -1 } (1 + \phi((x + x^{-1})^2 - 4d))$ fix two points. 
 \item $\displaystyle \frac{q-1}{4} - \frac{1}{4} \sum_{ x \in \mathbb{F}_q^*}  \phi((x + x^{-1})^2 - 4d)$ are derangements.
\end{itemize}

Putting all the above remarks together and assuming that $1-d$ is a square we obtain,
\begin{eqnarray*}
\sum_{0^g = \infty, 1^g=d} \psi_{-1}(g) & = & \frac{1}{4} \sum_{ x \in \mathbb{F}_q^*, x \neq 1, -1 } (1 + \phi((x + x^{-1})^2 - 4d))\\
   & & - \left(  \frac{q-1}{2} -2 - \frac{1}{4} \sum_{x \in \mathbb{F}_q^*, x \neq 1, -1 } (1 + \phi((x + x^{-1})^2 - 4d)) \right)\\
  & & - \left(  \frac{q-5}{4} - \frac{1}{4} \sum_{ x \in \mathbb{F}_q^*}  \phi((x + x^{-1})^2 - 4d) \right)\\
  & & + \left( \frac{q-1}{2} - \frac{q-5}{4} + \frac{1}{4} \sum_{ x \in \mathbb{F}_q^*}  \phi((x + x^{-1})^2 - 4d) \right) \\
& = & 2 + \sum_{x \in \mathbb{F}_q^*} \phi((x + x^{-1})^2- 4d )\\
 & = &  2 + \sum_{x \in \mathbb{F}_q^*} \phi(x^2 - 2(2d-1)x + 1)(1 + \phi(x))\\
 & = & q P_{\phi} (2d-1).
\end{eqnarray*}
Here the last equality above follows from Equation (\ref{ecu_extra1}).

The case where $(1-d)$ is not a square can be treated by similar computations. We omit the details.
\end{proof}

\section{The restriction of $T_N$ onto $V_{\psi_{-1}}$, $V_{\nu_{\gamma}}$ and $V_{\eta_{\beta}}$ }

In this section, we study the restriction of $T_N$ onto the irreducible constituents, $V_{\psi_{-1}}$, $\{V_{\nu_{\gamma}} \}_{\gamma \in \Gamma}$ and $\{V_{\eta_{\beta}}\}_{\beta \in B}$, of $V$.  We start with a technical lemma that will be useful for studying the character sums $T_{N,\chi}$ with $\chi \in  \{ \psi_{-1}, \{\eta_{\beta} \}_{\beta \in B}, \{ \nu_{\gamma}\}_{\gamma \in \Gamma} \}$.
\begin{lemma}\label{lemma_extra}
Let $i$ be an element of  $\mathbb{F}_{q^2}^* \setminus \mathbb{F}_q^*$ such that $i^2\in \mathbb{F}_q^*$. Then for all $\gamma \in \Gamma$,
\[
T_{N, \nu_{\gamma}} = \frac{(q-1)}{4} \left[ q^2 - 3q - \left( q + 1 \right)\gamma(-1) \phi(-1) - q^2 \sum_{ b\in \mathbb{F}_q^*, b \neq 1}  P_{\gamma}(2b^h-1) P_{\phi}(2b-1) \right].  
\]
Also, for all $\beta \in B$,
\[
T_{N,\eta_{\beta}} =  \frac{(q-1)}{4} \left[ q^2  + q + \left( q + 1 \right)\beta(i) \phi(-1) + q^2\sum_{ b\in \mathbb{F}_q^*, b \neq 1}  R_{\beta}(2b^h-1) P_{\phi}(2b-1) \right],
\]
and
\[
T_{N,\psi_{-1}} = \frac{(q-1)}{4} \left[ q^2 - 2 q  -3 - q^2 \sum_{ b\in \mathbb{F}_q^*, b \neq 1}  P_{\phi}(2b^h-1) P_{\phi}(2b-1) \right]. 
\]
\end{lemma}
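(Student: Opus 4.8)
The plan is to combine Lemma~\ref{lemma7} with the evaluations of the auxiliary character sums collected in Lemmas~\ref{lemma8}--\ref{lemma11}, and then to convert the quantity $N_{(0,\infty),(1,b)}$ appearing in the last term of Lemma~\ref{lemma7} into Legendre-sum language via Corollary~\ref{usefulcor}. Concretely, I would start from the two displayed formulas in Lemma~\ref{lemma7} (one for $q\equiv 1 \bmod 4$, one for $q\equiv 3 \bmod 4$) and substitute into them. For the first term, $\frac{(q-1)^3}{4}$, nothing needs to be done. For the second term, $\sum_{0^g=\infty,\infty^g=0}\chi(g^{-1})$, I would plug in the three values from Lemma~\ref{lemma8}: $\phi(-1)(q-1)$ for $\chi=\psi_{-1}$, $\gamma(-1)(q-1)$ for $\chi=\nu_\gamma$, and $-\beta(i)(q-1)$ for $\chi=\eta_\beta$. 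For the third term, I would first observe that $\sum_{0^g=\infty,1^g=b^h}\chi(g^{-1})$ is exactly one of the sums evaluated in Lemmas~\ref{lemma9}, \ref{lemma10}, \ref{lemma11}, namely $qP_\gamma(2b^h-1)$, $-qR_\beta(2b^h-1)$, or $qP_\phi(2b^h-1)$ respectively (using $\psi_{-1}(g)=\psi_{-1}(g^{-1})$ since the values in \eqref{ecu17} are real, so the sum for $\psi_{-1}$ needs no conjugation).

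The second main ingredient is to rewrite $N_{(0,\infty),(1,b)}$. By Corollary~\ref{usefulcor}, for $b\neq 0,1$ we have $N_{(0,\infty),(1,b)}=\frac{q-1}{4}-\frac{\phi(1-b)}{2}-\frac{q}{4}P_\phi(2b-1)$. Substituting this and the sum evaluations into the third term of Lemma~\ref{lemma7} produces, for each $\chi$, a sum over $b\in\F_q^*\setminus\{1\}$ of a product of the form (constant times $P_\gamma$ or $R_\beta$ or $P_\phi$ evaluated at $2b^h-1$) times $\bigl(\frac{q-1}{4}-\frac{\phi(1-b)}{2}-\frac{q}{4}P_\phi(2b-1)\bigr)$. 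Expanding this product yields three pieces: one proportional to $\sum_b (\text{sum at }2b^h-1)$, one proportional to $\sum_b \phi(1-b)(\text{sum at }2b^h-1)$, and one proportional to $\sum_b (\text{sum at }2b^h-1)P_\phi(2b-1)$; the last is the term displayed in the statement (the $q^2\sum_b$ term). So the real content is to show that the first two pieces, together with the $\frac{(q-1)^3}{4}$ term, the second term from Lemma~\ref{lemma8}, and a residual contribution, collapse into the stated closed form $\frac{q-1}{4}[q^2-3q-(q+1)\gamma(-1)\phi(-1)]$ (and the analogues).

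The main obstacle, and the step I expect to require the most care, is the bookkeeping needed to evaluate $\sum_{b\in\F_q^*\setminus\{1\}}(\text{sum at }2b^h-1)$ and $\sum_{b\in\F_q^*\setminus\{1\}}\phi(1-b)(\text{sum at }2b^h-1)$ in closed form. Here one must understand the M\"obius substitution $b\mapsto b^h$, where $h$ is the fixed element of $PGL(2,q)$ interchanging $1$ and $\infty$ and fixing $0$, i.e. $b^h=\tfrac{1}{1-b}$ (up to the chosen coordinates), so that $2b^h-1 = \tfrac{1+b}{1-b}$. One then expands $P_\gamma$ (or $R_\beta$, or $P_\phi$) by its defining character-sum formula (Definitions~\ref{def1}, \ref{def1_1}), interchanges the order of summation, and evaluates the resulting inner sum over $b$ as a Jacobi-type sum; the values $P_\gamma(1)=-1/q$, $P_\gamma(-1)=-\gamma(-1)/q$, $R_\beta(1)=1/q$, $R_\beta(-1)=-\beta(i)/q$ from Lemma~\ref{lemma18}, and $P_\epsilon$ from Lemma~\ref{lemma17}, will appear and must be tracked precisely, along with the endpoint corrections coming from excluding $b=1$ (and from points where $\phi$ or a character vanishes). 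The different constant terms $-3q$, $+q$, $-2q-3$ in the three cases reflect the different degrees $q+1$, $q-1$, $q$ of the corresponding representations and the different behavior of the associated sums at $\pm 1$. Both congruence cases $q\equiv 1$ and $q\equiv 3 \bmod 4$ must be handled; the $\phi(-1)$ factors in the final formulas are precisely what reconciles the two cases, since $\phi(-1)=1$ when $q\equiv 1$ and $\phi(-1)=-1$ when $q\equiv 3 \bmod 4$, so the two displayed formulas of Lemma~\ref{lemma7} merge into one after simplification. Once these intermediate sums are computed, the stated identities follow by straightforward (if lengthy) algebra.
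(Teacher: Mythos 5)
Your proposal follows essentially the same route as the paper's proof: substitute Lemma \ref{lemma8} and Lemmas \ref{lemma9}--\ref{lemma11} into the two congruence cases of Lemma \ref{lemma7}, rewrite $N_{(0,\infty),(1,b)}$ via Corollary \ref{usefulcor}, and reduce the matter to closed-form evaluation of $\sum_{b\neq 1}P_{\gamma}(2b^h-1)$ and $\sum_{b\neq 1}\phi(1-b)P_{\gamma}(2b^h-1)$ (and their $R_\beta$, $P_\phi$ analogues) by expanding the Legendre/Soto-Andrade sums and interchanging summation, exactly as the paper does. One slip to fix when you carry this out: since $h$ fixes $0$ and swaps $1$ and $\infty$, $b^h=\frac{b}{b-1}$ (not $\frac{1}{1-b}$, which is the $3$-cycle $0\mapsto1\mapsto\infty\mapsto0$), so $2b^h-1=\frac{b+1}{b-1}$; the unweighted sum is insensitive to this because both maps permute $\mathbb{F}_q\setminus\{0,1\}$, but the weighted sum $\sum_{b\neq1}\phi(1-b)P_{\gamma}(2b^h-1)$ does depend on the correct formula.
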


\begin{proof}
We will prove that the expression for $T_{N, \nu_{\gamma}}$ holds for every $\gamma \in \Gamma$. The proofs for the characters sums $T_{N,\eta_{\beta}}$ and $T_{N,\psi_{-1}} $ are similar; we omit those details.

First, assume that $q \equiv 1 \mbox{ mod }4$. It follows from Lemma \ref{lemma7} that 
\begin{eqnarray*}
T_{N,\nu_{\gamma}}  & = &\frac{(q-1)^3}{4} - \frac{q-1}{2} \sum_{0^g = \infty, \infty^g=0} \nu_{\gamma}(g^{-1}) + (q-1)  \sum_{\substack{b \in \mathbb{F}_q^* \\ b \neq 1}}  \left[ \sum_{0^g=\infty,  1^g =b^h}  \nu_{\gamma}(g^{-1}) \right] N_{(0,\infty),(1,b)}\\
 & = & \frac{(q-1)^3}{4} - \frac{(q-1)^2}{2} \gamma(-1)  + (q-1)  \sum_{\substack{b \in \mathbb{F}_q^* \\ b \neq 1}}  \left[ \sum_{0^g=\infty,  1^g =b^h}  \nu_{\gamma}(g^{-1}) \right] N_{(0,\infty),(1,b)}, 
\end{eqnarray*}
where for the last equality we have applied Lemma \ref{lemma8}. Also, recall that  $h \in PGL(2,q)$ is the unique element sending $0$ to $0$, $1$ to $\infty$ and $\infty$ to $1$. 

Let us define
\[
	S :=  \sum_{\substack{b \in \mathbb{F}_q^* \\ b \neq 1}}  \left[ \sum_{0^g=\infty,  1^g =b^h}  \nu_{\gamma}(g^{-1}) \right] N_{(0,\infty),(1,b)}.
\]
Applying Corollary \ref{usefulcor} and  Lemma \ref{lemma9} we obtain
\begin{eqnarray*}
S & = &  \sum_{\substack{b \in \mathbb{F}_q^* \\ b \neq 1}} qP_{\gamma}(2b^h-1) \left(  \frac{q-1}{4} - \frac{\phi(1-b)}{2} - \frac{1}{4} P_{\phi}(2b-1)  \right)\\
   & = & \frac{q(q-1)}{4} \sum_{\substack{b \in \mathbb{F}_q^* \\ b \neq 1}}  P_{\gamma}(2b^h-1) - \frac{q}{2} \sum_{\substack{b \in \mathbb{F}_q^* \\ b \neq 1}} \phi(1-b) P_{\gamma}(2b^h-1) - \frac{q^2}{4}  \sum_{\substack{b \in \mathbb{F}_q^* \\ b \neq 1}} P_{\gamma}(2b^h-1)P_{\phi}(2b-1).
\end{eqnarray*}
We now simplify the first two character sums in the above expression for $S$.

The following computation uses the connection between Legendre sums and hypergeometric sums given by Lemma  \ref{lemma13}. We have
\begin{eqnarray*}
\sum_{\substack{b \in \mathbb{F}_q^* \\ b \neq 1}}  P_{\gamma}(2b^h-1) & = &  \sum_{\substack{a \in \mathbb{F}_q \\ a \neq \pm 1}} P_{\gamma}(a) \\
     & = &   \sum_{\substack{a \in \mathbb{F}_q \\ a \neq \pm 1}} \hgq{\gamma}{\gamma^{-1}}{\epsilon}{\frac{1-a}{2};q}.
\end{eqnarray*}
Now, using Greene's definition of hypergeometric sums given in Equation (\ref{ecu12}) we get
\begin{eqnarray*}
\sum_{\substack{b \in \mathbb{F}_q^* \\ b \neq 1}}  P_{\gamma}(2b^h-1) & = &   \frac{\gamma^{-1}(-1)}{q} \sum_{\substack{a \in \mathbb{F}_q \\ a \neq \pm 1}} \sum_{x \in \F_q} \gamma^{-1}(x) \gamma(1-x) \gamma^{-1}\left(1-\frac{1}{2}(1-a)x \right) \\
     & = & \frac{\gamma^{-1}(-1)}{q}  \sum_{x \in \F_q^*} \gamma^{-1}(x) \gamma(1-x)  \sum_{\substack{a \in \mathbb{F}_q \\ a \neq \pm 1}} \gamma^{-1}\left(1-\frac{1}{2}(1-a)x \right) \\
     & = & \frac{\gamma^{-1}(-1)}{q}  \sum_{x \in \F_q^*} \gamma^{-1}(x) \gamma(1-x)  (-1 - \gamma^{-1}(1-x) )\\
     & = & \frac{1}{q} (1 + \gamma(-1)).
\end{eqnarray*} 

On the other hand, to compute the second sum we use the definition of Legendre sums given in Definition \ref{def1} and noting that $\phi(-1)=1$ when $q\equiv 1 \mod 4$, 
\begin{eqnarray*}
    \sum_{\substack{b \in \mathbb{F}_q^* \\ b \neq 1}} \phi(1-b) P_{\gamma}(2b^h-1) & = & \frac{1}{q}  \sum_{\substack{b \in \mathbb{F}_q^* \\ b \neq 1}} \phi(1-b)  \sum_{x \in \F_q^*} \gamma(x) \phi(1+(2-4b^h)x + x^2 ) \\
    & = & \frac{1}{q} \sum_{x \in \F_q^*} \gamma(x)  \sum_{\substack{b \in \mathbb{F}_q^* \\ b \neq 1}} \phi((x+1)^2 - 4b^hx)\phi(b-1) \\
    & = & \frac{1}{q} \sum_{x \in \F_q^*} \gamma(x)  \sum_{\substack{b \in \mathbb{F}_q^* \\ b \neq 1}} \phi( (x-1)^2 b - (x+1)^2) \\&=&  \frac{1}{q} \sum_{x \in \F_q^*, x\neq 1} \gamma(x)  \sum_{\substack{b \in \mathbb{F}_q^* \\ b \neq 1}} \phi( (x-1)^2 b - (x+1)^2) +\frac 1q   \sum_{\substack{b \in \mathbb{F}_q^* \\ b \neq 1}} \phi(-4)  \\
    &=&  \frac{1}{q} \sum_{x \in \F_q^*, x\neq 1} \gamma(x) (-\phi(-4x)-\phi(-(x+1)^2)) +\frac {q-2}q    \\
    & = & 1 + \frac{1}{q} \gamma(-1). \bk
\end{eqnarray*}  

Putting all the above results together we have
\[
S= - \frac{(q-1)}{4} + \frac{(q-3)}{4}\gamma(-1) - \frac{q^2}{4} \sum_{\substack{b \in \mathbb{F}_q^* \\ b \neq 1}} P_{\gamma}(2b^h-1)P_{\phi}(2b-1),
\] 
and plugging in $S$ into the expression for $T_{N,\nu_{\gamma}} $ we obtain
\[
T_{N,\nu_{\gamma}} = \frac{q-1}{4} \left[ q^2 - 3q - (q-1)\gamma(-1) - q^2 \sum_{\substack{b \in \mathbb{F}_q^* \\ b \neq 1}} P_{\gamma}(2b^h-1)P_{\phi}(2b-1)\right].
\]
The computations for the case  $q \equiv 3 \mbox{ mod }4$ are very similar.  In fact, the following expression is obtained for $T_{N,\nu_{\gamma}} $ assuming that $q \equiv 3 \mbox{ mod }4$,
\[
T_{N,\nu_{\gamma}} = \frac{q-1}{4} \left[ q^2 - 3q + (q-1)\gamma(-1) - q^2 \sum_{\substack{b \in \mathbb{F}_q^* \\ b \neq 1}} P_{\gamma}(2b^h-1)P_{\phi}(2b-1)\right].
\]
Finally, note that $\phi(-1)=1$ when $q \equiv 1 \mbox{ mod }4$ and $\phi(-1)=-1$ when $q \equiv 3 \mbox{ mod }4$. This fact completes the proof of the Lemma. 
\end{proof}

From Schur's Lemma we know that the restriction of $T_N$ onto any irreducible module is an isomorphism or the zero map.  The next theorem shows that the restriction of $T_N$ onto $V_{\eta_{\beta}}$ is a $PGL(2,q)$-module isomorphism for every $\beta \in B$.

For the proofs below, we will need the following function in $\ell^2(\mathbb{F}_q,m) $,
\[
\begin{array}{cccc}
f : & \mathbb{F}_q & \rightarrow & \mathbb{C} \\
    & x                    & \mapsto      & \phi(1-x)P_{\phi}(x)
\end{array}
\]
Note that the norm of $f$ is closely related to the norm of $P_{\phi}$,
\[
\Vert f \Vert^2 = \sum_{x \in \mathbb{F}_q} f(x)^2m(x) = \sum_{\substack{ x \in \mathbb{F}_q \\  x \neq 1 } } P_{\phi}(x)^2m(x) = \Vert P_{\phi}\Vert^2 - \frac{q+1}{q^2} = 1-\frac{1}{q}- \frac{2}{q^2},
\]
where we have used Lemma \ref{lemma20} in the last equality.  

\begin{theorem}\label{teo2}
For every $\beta \in B$ we have 
\[
T_N(V_{\eta_{\beta}}) \cong V_{\eta_{\beta}}.
\]
\end{theorem}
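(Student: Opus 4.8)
The plan is to reduce the statement, via the machinery of Section~\ref{psl_im_TN} and Schur's Lemma, to the single assertion $T_{N,\eta_\beta}\neq 0$, and then to rework the formula of Lemma~\ref{lemma_extra} into a genuine inner product in $\ell^2(\F_q,m)$, after which the estimate is just Cauchy--Schwarz. By Lemma~\ref{lemma_extra},
\[
T_{N,\eta_\beta}=\frac{q-1}{4}\Bigl[q^2+q+(q+1)\beta(i)\phi(-1)+q^2\,\Sigma_\beta\Bigr],\qquad \Sigma_\beta:=\sum_{b\in\F_q^*,\,b\neq1}R_\beta(2b^h-1)\,P_\phi(2b-1),
\]
so everything comes down to understanding the sum $\Sigma_\beta$.

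First I would identify $h$: it acts on $\F_q\cup\{\infty\}$ as the Möbius involution $b\mapsto b/(b-1)$ (fixing $0$ and $2$, interchanging $1$ and $\infty$), so $b\mapsto b^h$ is a bijection of $\F_q\setminus\{0,1\}$. Relabelling the summation variable by this involution turns $2b^h-1$ into $2b-1$ and $2b-1$ into $(b+1)/(b-1)$; the substitution $a=2b-1$ then gives $\Sigma_\beta=\sum_{a\neq\pm1}R_\beta(a)\,P_\phi\!\bigl(\tfrac{a+3}{a-1}\bigr)$. The key observation — the one place the hypergeometric machinery enters — is that $\tfrac12\bigl(1-\tfrac{a+3}{a-1}\bigr)=\tfrac{-2}{a-1}$ is exactly the reciprocal of $\tfrac{1-a}{2}$; hence expressing $P_\phi$ as a hypergeometric sum via Lemma~\ref{lemma13}, applying the inversion formula Lemma~\ref{lemma12}, and invoking Lemma~\ref{lemma13} once more yields, for $a\neq\pm1$,
\[
P_\phi\!\Bigl(\tfrac{a+3}{a-1}\Bigr)=\phi(2)\,\phi(1-a)\,P_\phi(a).
\]
Thus the summand equals $\phi(2)\,R_\beta(a)\,f(a)$ with $f(x)=\phi(1-x)P_\phi(x)$, the function recorded just before the theorem, and $\Sigma_\beta=\phi(2)\sum_{a\neq\pm1}R_\beta(a)f(a)$.

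Next I would reinstate the two excised terms $a=\pm1$, each weighted by $m(\pm1)=q+1$. Since $f(1)=0$, and a one-line Legendre-symbol computation gives $P_\phi(-1)=-\phi(-1)/q$, while $R_\beta(-1)=-\beta(i)/q$ by Lemma~\ref{lemma18} and $\phi(2)\phi(-2)=\phi(-1)$, the boundary contribution works out to exactly $-(q+1)\beta(i)\phi(-1)/q^2$. Feeding this back, the awkward term $(q+1)\beta(i)\phi(-1)$ cancels and we obtain the clean identity
\[
T_{N,\eta_\beta}=\frac{q-1}{4}\Bigl[\,q^2+q+q^2\phi(2)\,\langle R_\beta,f\rangle\,\Bigr],
\]
where $\langle\cdot,\cdot\rangle$ is the form of $\ell^2(\F_q,m)$ (all values involved are real). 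Cauchy--Schwarz, Lemma~\ref{lemma20}, and the already-computed $\|f\|^2=(q-2)(q+1)/q^2$ then give
\[
\bigl|\langle R_\beta,f\rangle\bigr|^2\le\|R_\beta\|^2\|f\|^2=\frac{(q+1)^2(q-2)}{q^3}<\frac{(q+1)^2}{q^2}=\Bigl(\frac{q^2+q}{q^2}\Bigr)^2,
\]
the inequality being strict because $q-2<q$. Hence $\bigl|q^2\phi(2)\langle R_\beta,f\rangle\bigr|<q^2+q$, so the bracket is nonzero, $T_{N,\eta_\beta}\neq0$, and therefore $T_N(V_{\eta_\beta})\cong V_{\eta_\beta}$ by the discussion of Section~\ref{psl_im_TN}.

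The main obstacle is the transformation $P_\phi\!\bigl(\tfrac{a+3}{a-1}\bigr)=\phi(2)\phi(1-a)P_\phi(a)$: once one sees that composing $P_\phi$ with the Möbius map arising from the $h$-change of variables merely twists it by a quadratic character, the sum collapses to $\langle R_\beta,f\rangle$ up to an explicit boundary term precisely engineered to annihilate the $\beta(i)$-contribution, and what remains is bookkeeping and one clean Cauchy--Schwarz estimate. This is exactly why $f$ and its norm were introduced ahead of the statement; the same template will dispatch $\psi_{-1}$ (with $R_\beta$ replaced by $P_\phi$) and, with minor modifications, the principal-series characters $\nu_\gamma$.
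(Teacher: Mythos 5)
Your proof is correct and takes essentially the same route as the paper's: you use Lemmas \ref{lemma13} and \ref{lemma12} to convert the sum into $\phi(2)\langle f,R_{\beta}\rangle$ plus a boundary term at $-1$ that cancels the $(q+1)\beta(i)\phi(-1)$ contribution, and then bound the inner product by the norm of $f$. The only cosmetic difference is that you finish with Cauchy--Schwarz using $\Vert R_{\beta}\Vert^2=(q+1)/q$, whereas the paper bounds $\langle f,R_{\beta}'\rangle^2\le\Vert f\Vert^2<1$ via the orthonormal expansion of $f$; the two estimates are equivalent here.
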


\begin{proof}
It suffices to show that $T_{N,\eta_{\beta}} \neq 0$ for all $\beta \in B$.  From Lemma \ref{lemma_extra} it follows that
\begin{equation}\label{psl_exp1}
T_{N,\eta_{\beta}} =  \frac{(q-1)}{4} \left[ q^2  + q + \left( q + 1 \right)\beta(i) \phi(-1) + q^2\sum_{ b\in \mathbb{F}_q^*, b \neq 1}  R_{\beta}(2b^h-1) P_{\phi}(2b-1) \right],
\end{equation}
where $i\in \F_{q^2}^*\setminus \F_q^*$ such that $i^2\in \F_q^*$.  We will show that the expression on the right hand side of Equation (\ref{psl_exp1}) is not equal to zero. 

We claim that the character sum 
\begin{equation}\label{ecu11}
\sum_{ b\in \mathbb{F}_q^*, b \neq 1}  R_{\beta}(2b^h-1) P_{\phi}(2b-1) 
\end{equation}
can be expressed in terms of the function $f$. Recall that $h$ is the unique element in $PGL(2,q)$ sending $0$ to $0$, $1$ to  $\infty$ and $\infty$ to $1$. Hence, if $ b\in \mathbb{F}_q^* $ and  $b \neq 1$ then $b^h \neq 0, 1, \infty$. Moreover, we have the following formula for $b^h$ when $b\in \mathbb{F}_q^*$  and $b \neq 1$,
\[
b^h= \frac{b}{b-1}
\] 
which implies that $(b^h)^h=b$ for any $b \in \mathbb{F}_q$. Thus, we can rewrite the sum in (\ref{ecu11}) as,
\[
\sum_{ b\in \mathbb{F}_q^*, b \neq 1}  R_{\beta}(2b^h-1) P_{\phi}(2b-1)  = \sum_{ b\in \mathbb{F}_q^*, b \neq 1}  P_{\phi}(2b^h-1) R_{\beta}(2b-1). 
\]

Using the relation between Legendre sums and hypergeometric sums given by Lemma \ref{lemma13}  and the transformation formula in Lemma \ref{lemma12}, the following expression for $P_{\phi}(2b^h-1)$ is obtained 
\[
P_{\phi}(2b^h -1) =\hgq{\phi}{\phi}{\epsilon}{\frac{1}{1-b};q} = \phi(1-b) \hgq{\phi}{\phi}{\epsilon}{1-b;q}  = \phi(1-b)P_{\phi}(2b-1), 
\] 
for $b \in \mathbb{F}_q$, $b \neq 0, 1$. Putting all the above remarks together we conclude that
\begin{eqnarray*}
\sum_{ b\in \mathbb{F}_q^*, b \neq 1}  R_{\beta}(2b^h-1) P_{\phi}(2b-1) & = & \sum_{ b\in \mathbb{F}_q^*, b \neq 1}  \phi(1-b) P_{\phi}(2b-1) R_{\beta}(2b-1) \\
& = &  \phi(2) \sum_{ x\in \mathbb{F}_q, x \neq \pm 1}  \phi(1-x) P_{\phi}(x) R_{\beta}(x)\\
& = & \phi(2) \left(1 +\frac{1}{q} \right)^{1/2} \langle f , R_{\beta}' \rangle - (q+1) \frac{\beta(i)\phi(-1) }{q^2}
\end{eqnarray*}
where $i$ is an element of $\mathbb{F}_{q^2}^*\setminus \F_q^*$ such that $i^2 \in \mathbb{F}_q^*$.
Therefore, plugging in the above expression into Equation (\ref{psl_exp1}), we can also express $T_{N,\eta_{\beta}}$ in terms of the function $f$,
\begin{equation}\label{ecu12}
T_{N,\eta_{\beta}}= \frac{q^2(q-1)}{4} \left[  1+ \frac{1}{q} + \phi(2) \left(1 +\frac{1}{q} \right)^{1/2} \langle f , R_{\beta}' \rangle  \right].
\end{equation}
Note that Equation (\ref{ecu12}) implies that if $|\langle f , R_{\beta}' \rangle| \leq 1$ then $T_{N,\eta_{\beta}} \neq 0$. We claim that $|\langle f , R_{\beta}' \rangle| \leq 1$ for every $\beta \in B$;  note that the theorem follows from the validity of this claim.

Recall that $\{ P_{\epsilon}', P_{\phi}', P_{\gamma}', R_{\beta}' : \mbox{ } \gamma \in  \Gamma, \beta \in B \}$ is an orthonormal basis of $\ell^2(\mathbb{F}_q,m)$. Thus, we can express $f$ in terms of this orthonormal basis,
\[
f= \langle f, P_{\epsilon}'\rangle P_{\epsilon}' + \langle f, P_{\phi}'\rangle P_{\phi}'  +\sum_{\gamma} \langle f, P_{\gamma}' \rangle P_{\gamma}' + \sum_{\beta} \langle f, R_{\beta}'\rangle R_{\beta}'. 
\]
Analogously, the squared norm of $f$ can also be expressed in terms of this orthonormal basis,
\[
\Vert f \Vert^2 = \langle f, P_{\epsilon}'\rangle^2 + \langle f, P_{\phi}'\rangle^2 + \sum_{\gamma} \langle f, P_{\gamma}' \rangle^2 + \sum_{\beta} \langle f, R_{\beta}'\rangle^2,
\]
where we have used the fact the coefficients in the expansion of $f$ are all real (cf. Lemma~\ref{lemma21}). 

On the other hand, we know that the squared norm of $f$ is $1-1/q-2/q^2$. This implies that the square of every coefficient of the form  $\langle f, g\rangle$ is less than 1 for all $g \in \{ P_{\epsilon}', P_{\phi}', P_{\gamma}', R_{\beta}' : \mbox{ } \gamma \in  \Gamma, \beta \in B \}$. In particular, $\langle f, R_{\beta}'\rangle^2 \leq 1-1/q-2/q^2 $ for all $\beta \in B$. Thus, our claim is proved.
\end{proof}

Unfortunately, the argument used in the proof of Theorem \ref{teo2} cannot be applied to show that the restriction of $T_N$ onto the irreducible module  $V_{\psi_{-1}}$ is a $PGL(2,q)$-module isomorphism. To deal with this case we exploit the connection between Legendre sums and Hypergeometric sums shown by Kable in \cite{Kable}. 


\begin{lemma}\label{lem:<f,P_gamma>}Let $\gamma$ be a nontrivial multiplicative character of $\F_q$. Then $$\phi(2) q^2 \langle f , P_{\gamma} \rangle=
q^3 \pFFq{4}{3}{\gamma & \gamma^{-1} & \phi & \phi}{ & \epsilon & \epsilon & \epsilon}{1 ; q} +  \phi(-1)\gamma(-1)q.$$\end{lemma}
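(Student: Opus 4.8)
The plan is to rewrite the inner product $\langle f, P_\gamma\rangle$ as a sum over $\mathbb{F}_q$ with the measure $m$, split off the contributions from $x=\pm 1$, and recognize the remaining sum as a ``correlation'' of two Legendre sums that matches the right-hand side of Lemma~\ref{lemma15}. Explicitly, by definition of the Hermitian form on $\ell^2(\mathbb{F}_q,m)$ and of $f$,
\[
\langle f, P_\gamma\rangle = \sum_{x\in\mathbb{F}_q} \phi(1-x) P_\phi(x) P_\gamma(x)\, m(x)
= \sum_{\substack{x\in\mathbb{F}_q\\ x\neq 1}} \phi(1-x) P_\phi(x) P_\gamma(x)\, m(x),
\]
since the $x=1$ term vanishes. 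The mass $m$ contributes $q+1$ at $x=-1$ and $1$ elsewhere, so I would write this as an unweighted sum over all $x\in\mathbb{F}_q$ plus a correction term $q\,\phi(2) P_\phi(-1) P_\gamma(-1)$ coming from the extra mass at $x=-1$. Using Lemma~\ref{lemma18}, $P_\gamma(-1)=-\gamma(-1)/q$, and $P_\phi(-1)=-\phi(-1)/q$ (this is the $\gamma=\phi$ case of Lemma~\ref{lemma18}, noting $\phi\in\Gamma$ is excluded only by convention but the formula still applies, or one evaluates directly), so the correction term becomes $\phi(2)\phi(-1)\gamma(-1)/q$. This already explains the shape of the extra summand $\phi(-1)\gamma(-1)q$ after multiplying through by $\phi(2)q^2$.

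Next I would handle the main sum $\sum_{x\in\mathbb{F}_q}\phi(1-x)P_\phi(x)P_\gamma(x)$. The idea is to substitute $x = 2z-1$ (equivalently $z=(1+x)/2$, a bijection of $\mathbb{F}_q$ since $q$ is odd), so that $\phi(1-x)=\phi(2-2z)=\phi(2)\phi(1-z)$, and by Lemma~\ref{lemma13} we have $P_\phi(x)=P_\phi(2z-1)= \hgq{\phi}{\phi}{\epsilon}{1-z;q}$ and $P_\gamma(x)=P_\gamma(2z-1)=\hgq{\gamma}{\gamma^{-1}}{\epsilon}{1-z;q}$, valid for $x\neq\pm1$, i.e. $z\neq 0,1$. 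Then I would apply the transformation formula of Lemma~\ref{lemma12} to $P_\phi$, writing $\hgq{\phi}{\phi}{\epsilon}{1-z;q} = \phi(1-z)\,\hgq{\phi}{\phi}{\epsilon}{\tfrac{1}{1-z};q}$; combined with the $\phi(1-z)$ already present this gives $\phi(1-z)^2 = 1$ for $z\neq 1$, so after the substitution the sum becomes (up to the factor $\phi(2)$ and the boundary terms at $z=0,1$) exactly
\[
\sum_{z\in\mathbb{F}_q}\phi(z)\,\hgq{\phi}{\phi}{\epsilon}{z;q}\,\hgq{\gamma}{\gamma^{-1}}{\epsilon}{z;q},
\]
once one checks that the argument $z$ versus $1/(1-z)$ matches up under a further change of variable — here I would either perform the substitution $z\mapsto$ (appropriate Möbius image) carefully or, more cleanly, mirror the computation in Lemma~\ref{lemma17}/Corollary~\ref{usefulcor} that already massaged $P_\phi$ sums into this form. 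By Lemma~\ref{lemma15} this sum equals $q\,\pFFq{4}{3}{\gamma & \gamma^{-1} & \phi & \phi}{ & \epsilon & \epsilon & \epsilon}{1;q}$.

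Finally I would collect terms: multiplying the whole identity by $\phi(2)q^2$ turns the main sum into $\phi(2)q^2\cdot\phi(2)\cdot q\,\pFFq{4}{3}{\cdots}{1;q} = q^3\,\pFFq{4}{3}{\cdots}{1;q}$ and turns the correction term into $\phi(-1)\gamma(-1)q$, yielding the claimed formula. The main obstacle I anticipate is bookkeeping around the boundary points $z\in\{0,1\}$ (equivalently $x\in\{\pm 1\}$): the hypergeometric identities of Lemmas~\ref{lemma12} and~\ref{lemma13} hold only for nonzero/non-$\pm1$ arguments, and the sum in Lemma~\ref{lemma15} runs over all of $\mathbb{F}_q$ (with the $z=0$ term killed by $\phi(z)$ but the $z=1$ term needing care), so I will need to verify that the contributions at these points either vanish or are exactly absorbed into the stated constant $\phi(-1)\gamma(-1)q$. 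A secondary point is confirming the precise change of variables that sends $z$ to the reciprocal-type argument so that the two $\hgq{}{}{}{}{}$ factors align; this should follow from the same manipulations (letting $w=1/x$, etc.) used in the proof of Lemma~\ref{lemma15} itself.
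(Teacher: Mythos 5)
Your overall strategy is the paper's: expand $\langle f,P_\gamma\rangle$ against the measure $m$, note the $x=1$ term dies, peel off the mass at $x=-1$ via Lemma \ref{lemma18}, convert the remaining sum to hypergeometric form with Lemma \ref{lemma13} and a linear change of variable, and finish with Lemma \ref{lemma15}. Your constant bookkeeping is correct: the extra mass $q$ at $x=-1$ contributes $\phi(2)\phi(-1)\gamma(-1)/q$, hence $\phi(-1)\gamma(-1)q$ after multiplying by $\phi(2)q^2$. Your variant of keeping the $x=-1$ term with weight $1$ inside the full unweighted sum does require the one-line check that Lemma \ref{lemma13} persists at $a=-1$, i.e. $\hgq{\gamma}{\gamma^{-1}}{\epsilon}{1;q}=-\gamma(-1)/q=P_\gamma(-1)$ (true, directly from Greene's definition \eqref{ecu13}); the paper instead keeps the full weight $q+1$ at $x=-1$ and then adds the $y=1$ term of the hypergeometric sum explicitly, which is what lowers $q+1$ to $q$ in the constant.

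The one step that does not work as written is the appeal to Lemma \ref{lemma12}. After your substitution $x=2z-1$ the summand is $\phi(2)\,\phi(1-z)\,\hgq{\phi}{\phi}{\epsilon}{1-z;q}\,\hgq{\gamma}{\gamma^{-1}}{\epsilon}{1-z;q}$, and the factor $\phi(1-z)$ is exactly the weight you need: putting $w=1-z$ turns it into $\phi(2)\,\phi(w)\,\hgq{\phi}{\phi}{\epsilon}{w;q}\,\hgq{\gamma}{\gamma^{-1}}{\epsilon}{w;q}$, which is $\phi(2)$ times the summand on the right-hand side of Lemma \ref{lemma15}. Applying Lemma \ref{lemma12} instead consumes that $\phi(1-z)$ and replaces the argument of the $\phi$-factor by $1/(1-z)$ while the $\gamma$-factor keeps argument $1-z$; the two arguments are now reciprocals of each other, so no further change of variable can bring them to a common argument, and the resulting sum cannot be identified with Lemma \ref{lemma15} except by undoing the transformation (reapplying Lemma \ref{lemma12}), nor do Lemma \ref{lemma17} or Corollary \ref{usefulcor} supply the missing alignment. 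The repair is simply to omit that step: with $w=1-z$ (equivalently the paper's direct substitution $y=(1-x)/2$), Lemma \ref{lemma15} evaluates the main sum as $\phi(2)\,q$ times the ${}_4{\mathbb F}_3$ value at $1$, and multiplying through by $\phi(2)q^2$ gives the stated identity. With that correction your argument coincides with the paper's proof.
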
 
\begin{proof}Applying  Lemmas \ref{lemma15} and \ref{lemma13} we obtain,
\begin{eqnarray*}
\phi(2) q^2 \langle f , P_{\gamma} \rangle  & = & \phi(2) q^2 \sum_{\substack{ x \in \mathbb{F}_q \\ x \neq \pm 1}} \phi(1-x) P_{\phi}(x)P_{\gamma}(x) + q^2 P_{\phi}(-1)P_{\gamma}(-1) m(-1) \\
   & = & q^2 \sum_{\substack{ y \in \mathbb{F}_q^* \\ y \neq 1}}  \phi(y) \hgq{\phi}{\phi}{\epsilon}{y;q} \hgq{\gamma}{\gamma^{-1}}{\epsilon}{y;q} + \phi(-1)\gamma(-1)(q+1) \\
   & = &  q^2 \sum_{y \in \mathbb{F}_q}  \phi(y)  \hgq{\phi}{\phi}{\epsilon}{y;q}\hgq{\gamma}{\gamma^{-1}}{\epsilon}{y;q} +  \phi(-1)\gamma(-1)q \\
   & = &  q^3 \pFFq{4}{3}{\gamma & \gamma^{-1} & \phi & \phi}{ & \epsilon & \epsilon & \epsilon}{1 ; q} +  \phi(-1)\gamma(-1)q.
\end{eqnarray*}
\end{proof}

\begin{theorem}\label{teo6}
If $q \geq 7$ then,
\[
T_N(V_{\psi_{-1}}) \cong V_{\psi_{-1}}.
\]
\end{theorem}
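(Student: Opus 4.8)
The plan is to prove the equivalent statement that the character sum $T_{N,\psi_{-1}}$ is nonzero; by Schur's Lemma and the reduction carried out in Section~\ref{psl_im_TN} (recall that $V_{\psi_{-1}}$ occurs in $V$ with multiplicity one), this forces $T_N(V_{\psi_{-1}})\cong V_{\psi_{-1}}$. As the remark preceding Lemma~\ref{lem:<f,P_gamma>} indicates, the orthonormal-basis argument of Theorem~\ref{teo2} is not strong enough here: since $P_\phi$ itself is (up to scaling) one of the basis vectors, that argument gives only $|\langle f,P_\phi\rangle|<\Vert P_\phi\Vert<1$, whereas what we need is a bound of size $O(q^{-1/2})$. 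So I would instead identify $\langle f,P_\phi\rangle$ with a particular hypergeometric value over $\mathbb{F}_q$ and invoke the sharp estimate of Proposition~\ref{prop:15}.

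The first step is to simplify the formula for $T_{N,\psi_{-1}}$ supplied by Lemma~\ref{lemma_extra}. Exactly as in the proof of Theorem~\ref{teo2}, the transformation formulas of Lemmas~\ref{lemma13} and~\ref{lemma12} give $P_\phi(2b^h-1)=\phi(1-b)P_\phi(2b-1)$ for $b\neq 0,1$. Combining this with the substitution $x=2b-1$ and the elementary value $P_\phi(-1)=-\phi(-1)/q$ (the same computation appears inside the proof of Lemma~\ref{lem:<f,P_gamma>}), one obtains
\[
\sum_{b\in\mathbb{F}_q^*,\,b\neq 1}P_\phi(2b^h-1)\,P_\phi(2b-1)=\phi(2)\,\langle f,P_\phi\rangle-\frac{q+1}{q^2},
\]
and substituting this back into Lemma~\ref{lemma_extra} and collecting terms yields the clean identity
\[
T_{N,\psi_{-1}}=\frac{q-1}{4}\Bigl(q^2-q-2-q^2\,\phi(2)\,\langle f,P_\phi\rangle\Bigr).
\]

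Next I would apply Lemma~\ref{lem:<f,P_gamma>} with the nontrivial character $\gamma=\phi$. Since $\phi^{-1}=\phi$ and $\phi(-1)^2=1$, that lemma gives
\[
q^2\,\phi(2)\,\langle f,P_\phi\rangle=q^3\,\pFFq{4}{3}{\phi & \phi & \phi & \phi}{ & \epsilon & \epsilon & \epsilon}{1 ; q}+q,
\]
and Proposition~\ref{prop:15} applied with $n=2$ (legitimate because $q$ is odd, so $q\equiv 1\bmod 2$, and $\phi$ is the unique order-$2$ character of $\mathbb{F}_q$) bounds the right-hand side in absolute value by $2q^{3/2}$. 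As $f$ is real-valued and $P_\phi$ is real (Lemma~\ref{lemma21}), the quantity $q^2\phi(2)\langle f,P_\phi\rangle$ is a real number of absolute value at most $2q^{3/2}$, so the reverse triangle inequality gives
\[
|T_{N,\psi_{-1}}|\ \ge\ \frac{q-1}{4}\bigl(q^2-q-2-2q^{3/2}\bigr).
\]
An elementary check finishes the proof: writing $t=\sqrt q$, the polynomial $t^4-2t^3-t^2-2$ is positive at $t=\sqrt 7$ and has positive derivative for $t\ge\sqrt 7$, so $q^2-q-2-2q^{3/2}>0$ for every prime power $q\ge 7$; hence $T_{N,\psi_{-1}}\neq 0$.

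The only genuine obstacle is the middle step. One must resist estimating the auxiliary sum crudely and instead recognize it as the special value $\pFFq{4}{3}{\phi & \phi & \phi & \phi}{ & \epsilon & \epsilon & \epsilon}{1 ; q}$ of a finite-field hypergeometric function, for which the results of \cite{LTYW} (via Proposition~\ref{prop:15}) provide an estimate of exactly the right strength; the threshold $q\ge 7$ is precisely the range in which $2q^{3/2}$ is dominated by the trivial term $q^2-q-2$, and the remaining small case $q=5$ is handled separately.
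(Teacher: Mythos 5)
Your proposal is correct and follows essentially the same route as the paper: reduce via Lemma \ref{lemma_extra} and the transformation $P_\phi(2b^h-1)=\phi(1-b)P_\phi(2b-1)$ to the identity $T_{N,\psi_{-1}}=\frac{q-1}{4}\bigl(q^2-q-2-q^2\phi(2)\langle f,P_\phi\rangle\bigr)$, then bound $q^2\phi(2)\langle f,P_\phi\rangle$ by $2q^{3/2}$ through Lemma \ref{lem:<f,P_gamma>} with $\gamma=\phi$ and Proposition \ref{prop:15}. Your explicit positivity check of $q^2-q-2-2q^{3/2}$ for $q\ge 7$ just makes precise what the paper leaves implicit.
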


\begin{proof}
It suffices to show that  $T_{N,\psi_{-1}} \neq 0$. It follows from Lemma \ref{lemma_extra} that
\[
T_{N,\psi_{-1}} = \frac{(q-1)}{4} \left[ q^2 - 2 q  -3 - q^2 \sum_{ b\in \mathbb{F}_q^*, b \neq 1}  P_{\phi}(2b^h-1) P_{\phi}(2b-1) \right].  
\]

Let $f$ be the function in $\ell^2(\mathbb{F}_q,m)$ defined before the statement of Theorem \ref{teo2}. By Lemmas \ref{lemma12} and \ref{lemma13}  we see that the sum
\[
\sum_{ b\in \mathbb{F}_q^*, b \neq 1}  P_{\phi}(2b^h-1) P_{\phi}(2b-1) 
\]
can be written in terms of the function $f$. In particular, 
\begin{eqnarray*}
\sum_{ b\in \mathbb{F}_q^*, b \neq 1}  P_{\phi}(2b^h-1) P_{\phi}(2b-1) & = & \sum_{ b\in \mathbb{F}_q^*, b \neq 1}  \phi(1-b) P_{\phi}(2b-1) P_{\phi}(2b-1) \\
& = &  \phi(2) \sum_{ x\in \mathbb{F}_q, x \neq \pm 1}  \phi(1-x) P_{\phi}(x) P_{\phi}(x)\\
& = & \phi(2)  \langle f , P_{\phi} \rangle -  \frac{q+1}{q^2}.
\end{eqnarray*}
Thus, $T_{N,\psi_{-1}} $ can be expressed in terms of $f$:
\begin{equation}\label{ecu20}
T_{N,\psi_{-1}} = \frac{(q-1)}{4} \left[  q^2-  q - 2 - \phi(2) q^2 \langle f , P_{\phi} \rangle  \right].
\end{equation}

We claim that $ \phi(2) q^2 \langle f , P_{\phi} \rangle \leq 2q^{3/2}$. This claim together with Equation (\ref{ecu20}) immediately implies that $T_{N,\psi_{-1}} \neq 0$ for every $q \geq 7$. 

To prove our claim we note that the character sum $\phi(2) q^2 \langle f , P_{\phi} \rangle$ can be written in terms of a hypergeometric sum $_4\F_3$.   Letting $\gamma=\phi$ in Lemma \ref{lem:<f,P_gamma>}, $$\phi(2) q^2 \langle f , P_{\phi} \rangle= q^3 \pFFq{4}{3}{\phi & \phi & \phi & \phi}{ & \epsilon & \epsilon & \epsilon}{1 ; q} + q.$$    Therefore, our claim follows directly from the final conclusion of Proposition \ref{prop:15}. 
\end{proof}

To study the restriction of $T_N$ onto $V_{\nu_{\gamma}}$ we consider two cases. First, if $\gamma$ is a character whose order is not equal to three, four or six then we can apply arguments similar to the ones used in the proof of Theorem \ref{teo2} to prove that the restriction is an isomorphism. On the other hand, different ideas have to be used to show that the same result holds when $\gamma$ has order three, four or six. The next theorem deals with these cases.

\begin{theorem}\label{teo3}
Assume that $q \geq 11$. If $\gamma \in \Gamma$ 
then
\[
T_N(V_{\nu_{\gamma}}) \cong V_{\nu_{\gamma}}.
\]
\end{theorem}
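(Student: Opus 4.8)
The plan is to show $T_{N,\nu_\gamma}\ne0$ for every $\gamma\in\Gamma$; Schur's lemma then yields $T_N(V_{\nu_\gamma})\cong V_{\nu_\gamma}$, exactly as in Theorems~\ref{teo2} and~\ref{teo6}. First I would run the same reduction used in those two proofs: starting from the formula for $T_{N,\nu_\gamma}$ in Lemma~\ref{lemma_extra}, I would rewrite $\sum_{b}P_{\gamma}(2b^h-1)P_{\phi}(2b-1)$ in terms of the function $f(x)=\phi(1-x)P_{\phi}(x)$. Using $b^h=b/(b-1)$ and $(b^h)^h=b$, the transformation $P_{\phi}(2b^h-1)=\phi(1-b)P_{\phi}(2b-1)$ (Lemmas~\ref{lemma12} and~\ref{lemma13}), and the values $P_{\phi}(-1)=-\phi(-1)/q$ and $P_{\gamma}(-1)=-\gamma(-1)/q$ (Lemma~\ref{lemma18}), the terms involving $\gamma(-1)\phi(-1)$ cancel and one obtains
\[
T_{N,\nu_\gamma}=\frac{q^2(q-1)}{4}\Bigl[\frac{q-3}{q}-\phi(2)\langle f,P_{\gamma}\rangle\Bigr].
\]
Since $\langle f,P_{\gamma}\rangle$ is real (Lemma~\ref{lemma21}) and $\phi(2)=\pm1$, it suffices to prove $\langle f,P_{\gamma}\rangle\ne\pm\tfrac{q-3}{q}$, and I would do this in two cases according to the order $n$ of $\gamma$.

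In the generic case $n\notin\{3,4,6\}$ (so $\phi(n)\ge4$) I would argue in the spirit of Theorem~\ref{teo2}, combining the orthogonal basis $\mathfrak{L}$ of $\ell^2(\F_q,m)$ from Lemma~\ref{lemma20} with a Galois descent. Suppose $\langle f,P_{\gamma}\rangle=\pm\tfrac{q-3}{q}$, which is rational. Because $f$ and the weight $m$ are $\Q$-valued while $\sigma_a(P_{\gamma})=P_{\gamma^a}$ for $\sigma_a\in\mathrm{Gal}(\Q(\zeta_n)/\Q)$, applying $\sigma_a$ forces $\langle f,P_{\gamma^a}\rangle=\langle f,P_{\gamma}\rangle$ for every $a$ coprime to $n$. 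By Lemma~\ref{lemma21} the sum $P_{\gamma^a}$ depends only on the class of $a$ in $(\Z/n)^*/\{\pm1\}$, and distinct classes give orthogonal, hence distinct, members of $\mathfrak{L}$; thus we obtain $\phi(n)/2\ge2$ distinct basis vectors, each with $|\langle f,P_{\gamma^a}\rangle|=\tfrac{q-3}{q}$ and $\|P_{\gamma^a}\|^2=\tfrac{q-1}{q}$. Bessel's inequality applied to the corresponding orthonormal vectors then forces
\[
\frac{\phi(n)}{2}\cdot\frac{(q-3)^2}{q(q-1)}\le\|f\|^2=1-\frac1q-\frac2{q^2},
\]
which is false for $q\ge11$. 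This contradiction gives $T_{N,\nu_\gamma}\ne0$.

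For $n\in\{3,4,6\}$ the Galois orbit of $P_{\gamma}$ collapses ($(\Z/n)^*=\{\pm1\}$, so $\gamma^a\in\{\gamma,\gamma^{-1}\}$ for all admissible $a$), so Bessel's inequality alone gives nothing and I would instead use the hypergeometric machinery. By Lemma~\ref{lem:<f,P_gamma>},
\[
\phi(2)q^2\langle f,P_{\gamma}\rangle=q^3\,\pFFq{4}{3}{\gamma & \gamma^{-1} & \phi & \phi}{ & \epsilon & \epsilon & \epsilon}{1 ; q}+\phi(-1)\gamma(-1)q,
\]
and Proposition~\ref{prop:15} (applicable because $q\equiv1\bmod n$ automatically holds when $\gamma$ has order $n$) bounds the right-hand side by $2q^{3/2}$ in absolute value. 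Hence $|\langle f,P_{\gamma}\rangle|\le2q^{-1/2}<\tfrac{q-3}{q}$ for $q\ge11$, and again $T_{N,\nu_\gamma}\ne0$.

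The step I expect to be the main obstacle is precisely this last case $n\in\{3,4,6\}$: the elementary orthogonality/Galois argument degenerates there, and one is forced to control a genuinely arithmetic quantity --- a ${}_4\F_3$ at $1$ whose associated hypergeometric motive has complex multiplication --- for which the only handle is the deep estimate of Proposition~\ref{prop:15}, resting on the results of \cite{LTYW}. Checking that the inequality margins are actually met at the smallest relevant fields (for example $q=11$, where only characters of order $5$ and $10$ occur, so the whole theorem there rests on the generic case, and $q=13$, where both a character of order $12$ and characters of orders $3$, $4$, $6$ appear) is where the hypothesis $q\ge11$ is pinned down.
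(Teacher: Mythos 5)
Your proposal is correct and takes essentially the same route as the paper: the identical reduction of $T_{N,\nu_\gamma}$ to the coefficient $\langle f,P_\gamma\rangle$ via Lemma \ref{lemma_extra}, Lemmas \ref{lemma12}--\ref{lemma13}, a Galois-conjugation argument combined with the orthogonal basis of $\ell^2(\F_q,m)$ when the order of $\gamma$ is not $3$, $4$ or $6$, and Lemma \ref{lem:<f,P_gamma>} together with Proposition \ref{prop:15} for orders $3$, $4$, $6$. Your only deviation is replacing the paper's informal observation that at most one coefficient of $f$ can be close to $1$ by an explicit Bessel-inequality computation, which is a harmless quantitative sharpening of the same idea.
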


\begin{proof}

We proceed as we did in the proof of Theorem \ref{teo2}. Thus, to prove this theorem it is enough to show that  $T_{N,\nu_{\gamma}} \neq 0$. It follows from Lemma \ref{lemma_extra} that
\[
T_{N, \nu_{\gamma}} = \frac{(q-1)}{4} \left[ q^2 - 3q - \left( q + 1 \right)\gamma(-1) \phi(-1) - q^2 \sum_{ b\in \mathbb{F}_q^*, b \neq 1}  P_{\gamma}(2b^h-1) P_{\phi}(2b-1) \right].  
\]


Applying Lemmas \ref{lemma12} and \ref{lemma13} it is possible to write the sum of products of Legendre sums in terms of the function $f$. In fact,
\begin{eqnarray*}
\sum_{ b\in \mathbb{F}_q^*, b \neq 1}  P_{\gamma}(2b^h-1) P_{\phi}(2b-1)  & = & \phi(2) \left( 1 -\frac{1}{q} \right)^{1/2} \langle f , P_{\gamma}' \rangle - (q+1) \frac{\gamma(-1)\phi(-1) }{q^2}. 
\end{eqnarray*}
Therefore,  for every $\gamma \in \Gamma$  we have 
\begin{equation}\label{ecu18_2}
 T_{N, \nu_{\gamma}} = \frac{q^2(q-1)}{4}  \left[ 1- \frac{3}{q} -  \phi(2) \left( 1 -\frac{1}{q} \right)^{1/2} \langle f , P_{\gamma}' \rangle  \right].
\end{equation}

Recall that
\begin{equation}\label{ecu18}
\Vert f \Vert^2=\langle f, P_{\epsilon}'\rangle^2 + \langle f, P_{\phi}'\rangle^2 + \sum_{\gamma} \langle f, P_{\gamma} \rangle^2 + \sum_{\beta} \langle f, R_{\beta}'\rangle^2  = 1-\frac{1}{q} - \frac{2}{q^2},
\end{equation}
where $\{ P_{\epsilon}', P_{\phi}', P_{\gamma}', R_{\beta}' : \mbox{ } \gamma \in  \Gamma, \beta \in B \}$ is an orthonormal basis of $\ell^2(\mathbb{F}_q,m)$. Equation (\ref{ecu18}) implies that  at most one of the coefficients $ \langle f, g \rangle$ with $g \in \{ P_{\epsilon}', P_{\phi}', P_{\gamma}', R_{\beta}' : \mbox{ } \gamma \in  \Gamma, \beta \in B \}$ 
 can be close to $1$. On the other hand, it is clear from (\ref{ecu18_2}) that $ T_{N, \nu_{\gamma}} = 0$ if and only if the coefficient $\langle f, P_{\gamma}' \rangle$ is close to $1$.  

To prove the theorem we proceed by contradiction. Assume that there exists $\gamma \in \Gamma$ such that $ T_{N, \nu_{\gamma}} = 0$. Hence, it follows from equation (\ref{ecu18_2}) that 
\begin{equation}\label{ecu18_3}
\langle f , P_{\gamma}' \rangle^2 =1 -\frac{5}{q} + \frac{4}{q(q-1)}.
\end{equation}

Let $\mbox{Gal}(\Q(\zeta_{q-1})/\Q )$ be the Galois group where $\zeta_{q-1}$ is a primitive $(q-1)$-th root of the unity. If $\gamma$ is a nontrivial character whose order is not equal to three, four or six,   there exists $\sigma \in \mbox{Gal}(\Q(\zeta_{q-1})/\Q )$ such that $\gamma^{\sigma} \neq \gamma$ and $\gamma^{\sigma} \neq \gamma^{-1}$. Now, applying the Galois automorphism $\sigma$ to both sides of  (\ref{ecu18_3}) we conclude that
\begin{eqnarray*}
\sigma \left( \langle f , P_{\gamma}' \rangle^2 \right)  & = & \sigma\left(1 -\frac{5}{q} + \frac{4}{q(q-1)}\right)\\
\langle f , P_{\gamma^{\sigma}}' \rangle^2  & = & 1 -\frac{5}{q} + \frac{4}{q(q-1)}.
\end{eqnarray*}
Thus, $ \langle f , P_{\gamma}' \rangle^2 $ and $\langle f , P_{\gamma^{\sigma}}' \rangle^2$ are equal to $1 -\frac{5}{q} + \frac{4}{q(q-1)}$ which is a contradiction because at most one of the coefficients $ \langle f, g \rangle$ with $g \in \{ P_{\epsilon}', P_{\phi}', P_{\gamma}', R_{\beta}' : \mbox{ } \gamma \in  \Gamma, \beta \in B \}$ 
 can be close to $1$.  Assume now   $\gamma \in \Gamma$ is a character of order $3$, $4$ or  $6$.   From equation (\ref{ecu18_2}) we get the following expression for $T_{N,\nu_{\gamma}}$,
\[
T_{N, \nu_{\gamma}} = \frac{(q-1)}{4}  \left[  q^2  - 3q -  \phi(2) q^2 \langle f , P_{\gamma} \rangle  \right].
\]
By Lemma \ref{lem:<f,P_gamma>}, $$\phi(2) q^2 \langle f , P_{\gamma} \rangle =q^3 \pFFq{4}{3}{\gamma & \gamma^{-1} & \phi & \phi}{ & \epsilon & \epsilon & \epsilon}{1 ; q} +  \phi(-1)\gamma(-1)q.$$ 
Now applying Proposition \ref{prop:15}, we conclude that $T_{N\nu_{\gamma}}\neq 0$.
\end{proof}

Finally, we are ready to prove Theorem \ref{psl_teo2}.

\begin{proof}[Proof of Theorem \ref{psl_teo2}]
Recall that in Section \ref{psl_im_TN}  we proved the following lower and upper bounds on the rank of the derangement matrix $M$ of $PSL(2,q)$ acting on $PG(1,q)$,
\begin{equation}
	\sum_{\{ \chi : \mbox{ }T_{N,\chi} \neq 0 \}} \dim(V_{\chi}) \leq \mbox{rank}(M) \leq q(q-1).
\end{equation}
These bounds imply that if $T_{N,\chi}$ is not zero for every  $\chi \in  \{ \lambda_1, \psi_{-1}, \{\eta_{\beta} \}_{\beta \in B}, \{ \nu_{\gamma}\}_{\gamma \in \Gamma} \}$  then the rank of $M$ is $q(q-1)$.

 If $q \geq 11$ then it follows from Theorems \ref{teo2}, \ref{teo6} and \ref{teo3} that $T_{N,\chi}\neq 0$ for all $\chi \in  \{ \lambda_1, \psi_{-1}, \{\eta_{\beta} \}_{\beta \in B}, \{ \nu_{\gamma}\}_{\gamma \in \Gamma} \}$. Furthermore, for each odd prime power $q$, $3<q<11$, we use a computer to check that the rank of $M$ is exactly $q(q-1)$.
\end{proof}


\section{Conclusions}

In this paper we consider the natural right action of $PSL(2,q)$ on $PG(1,q)$, where $q$ is an odd prime power.  Using the eigenvalue method,  it was proved in \cite{Karen1, Karen3}  that the maximum size of an intersecting family in $PSL(2,q)$ is $q(q-1)/2$. Meagher and Spiga \cite{Karen1} conjectured that the cosets of point stabilizers are the only intersecting families of maximum size in $PSL(2,q)$, when $q>3$ is an odd prime power.  Here, we prove their conjecture in the affirmative using tools from representation theory of $PGL(2,q)$ and deep results from number theory. 

For future research, one could consider the stability problem concerning intersecting families of $PSL(2,q)$. To present this problem we introduce the notion of stability.

Let $X$ be a finite set and $G$ a finite group acting on $X$. Recall that a subset $S$ of $G$ is said to be an  intersecting family if for any $g_1,g_2 \in S$ there exists an element $x\in X$ such that $x^{g_1}= x^{g_2}$. We will refer to intersecting families of maximum size as {\it extremal families}. Moreover, intersecting families whose sizes are close to the maximum are called {\it almost extremal families}. We say that the extremal families of a group $G$ acting on $X$ are {\it stable} if almost extremal families are similar in structure to the extremal ones.

The stability of intersecting families has been studied during the past few years (cf. \cite{Ellis5, eff, Plaza}).  Consider the action of $S_n$ on $[n]$. As was remarked in the introduction, the size of extremal families in $S_n$ is $(n-1)!$ and every extremal family is a coset of a point stabilizer. Furthermore, the stability of extremal families in $S_n$ was established by Ellis \cite{Ellis5}, who  proved that for any $\epsilon >0$ and $n > N(\epsilon)$, any intersecting family of size at least $(1-1/e +\epsilon) (n-1)!$ must be strictly contained in an extremal family. Analogously, the same problems were solved for the group $PGL(2,q)$ acting on $PG(1,q)$. In fact, the size of extremal families in $PGL(2,q)$ is $q(q-1)$ and every extremal family is a coset of a point stabilizer. Recently, in \cite{Plaza} it was proved that the extremal families in $PGL(2,q)$ are stable.
 
We conjecture that the extremal families in $PSL(2,q)$ are also stable. The precise statement is given below.

\begin{conjecture}
Let $S$ be an intersecting family in $PSL(2,q)$ with $q>3$ an odd prime power. Then there exists $\delta > 0$ such that if $|S| \geq (1-\delta) q(q-1)/2$ then $S$ is contained within a coset of a point stabilizer.
\end{conjecture}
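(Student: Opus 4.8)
The plan is to transplant the Fourier-analytic / representation-theoretic approach used by Ellis for $S_n$ \cite{Ellis5} and by \cite{Plaza} for $PGL(2,q)$ into the $PSL(2,q)$ setting, using the character-sum machinery developed in this paper to supply the quantitative input. Identify an intersecting family $S$ with an independent set in the \emph{derangement Cayley graph} $\mathcal D$ on the vertex set $PSL(2,q)$, whose connection set is the set of derangements; this set is a union of conjugacy classes, hence symmetric, and $\mathcal D$ is regular of degree $d=q(q-1)^2/4$ (the derangement count recorded in the proof of Lemma~\ref{lemma1}). A coset of a point stabiliser is an independent set of size $q(q-1)/2=|PSL(2,q)|/(q+1)$, and the ratio (Hoffman) bound shows this is the maximum, with the characteristic vector of any extremal family lying in $\mathbb C\mathbf 1$ plus the least-eigenvalue eigenspace. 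The first task is therefore to understand the spectrum of $\mathcal D$: its eigenvalues are $\lambda_\chi=\chi(1)^{-1}\sum_{g\in D}\chi(g)$ over the irreducible characters $\chi$ of $PSL(2,q)$, and one must show that the least eigenvalue $\tau$ (for which the ratio bound is exactly $q(q-1)/2$) is attained \emph{only} on the standard degree-$q$ constituent $\mathcal U_1$ of $\mathbb C[PG(1,q)]$, together with a uniform spectral gap: every other nontrivial eigenvalue $\mu$ satisfies $|\mu-\tau|\ge c\,|\tau|$ for an absolute constant $c>0$. (Since the derangement classes all have the form $v_r$, the sums $\sum_{g\in D}\chi(g)$ split as $q^2$ times an $\mathbb F_q$- or $\mathbb F_{q^2}$-character sum plus lower-order terms, so crude square-root cancellation already gives $\lambda_\chi=O(q^{3/2})$ for $\chi\ne\psi_1,\lambda_1$, which is the gap.)

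\textbf{From the gap to $L^2$-closeness.} Granting this, a robust version of the ratio bound applies: writing $\mathbf 1_S=\tfrac{|S|}{|G|}\mathbf 1+f_\tau+f_\perp$ with $f_\tau$ in the $\tau$-eigenspace and $f_\perp$ orthogonal to $\mathbf 1$ and to $f_\tau$, expanding $\langle\mathbf 1_S,A(\mathcal D)\mathbf 1_S\rangle=0$ and using the gap forces $\|f_\perp\|_2^2=O(\delta)\,\|\mathbf 1_S\|_2^2$ whenever $|S|\ge(1-\delta)q(q-1)/2$. Hence $\mathbf 1_S$ lies (relatively) $O(\sqrt\delta)$-close in $L^2$ to the ``degree-$\le1$'' space $\mathcal U:=\mathbb C\mathbf 1\oplus(\tau\text{-eigenspace})$, which one identifies with $\operatorname{span}_{\mathbb C}\{\mathbf 1_{A_{i,j}}:i,j\in PG(1,q)\}$, the span of matrix coefficients of the permutation module, where $A_{i,j}=\{g\in PSL(2,q):i^g=j\}$. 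The next step is a Friedgut--Kalai--Naor-type (FKN) theorem for $PSL(2,q)$: a $\{0,1\}$-valued function whose $L^2$ distance to $\mathcal U$ is at most $\epsilon\|f\|_2$ must be $O(\epsilon^2)\|f\|_2^2$-close to the indicator of one of the canonical sets $\varnothing$, $G$, $A_{i,j}$, $A_{i,j}^{\mathrm c}$, or a union $\bigcup_{j\in T}A_{i,j}$. Only $A_{i,j}$ has size comparable to $q(q-1)/2$, and the remaining possibilities contradict either the size hypothesis or (for $|T|\ge2$) the intersecting property; thus $|S\,\triangle\,A_{i,j}|=O(\delta)|A_{i,j}|$ for some unique $(i,j)$ (unique because two distinct $A_{i,j}$ meet in only $(q-1)/2$ points).

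\textbf{Upgrading closeness to containment.} It remains to pass from ``$S$ is $O(\delta)$-close to $A_{i,j}$'' to ``$S\subseteq A_{i,j}$''. This last step is clean: for any $g\notin A_{i,j}$, the number of $h\in A_{i,j}$ with $gh^{-1}$ a derangement equals $|A_{j',i}\cap D|$ with $j'=j^{g^{-1}}\ne i$, and by the $2$-transitivity of $PSL(2,q)$ on $PG(1,q)$ this is the same for all such $g$, namely $|D|/q=(q-1)^2/4$ — exactly the diagonal entry in Lemma~\ref{lemma1}(1). If $g\in S\setminus A_{i,j}$, all $(q-1)^2/4$ of these derangement-neighbours must avoid $S$, so $|A_{i,j}\setminus S|\ge(q-1)^2/4$; but $|A_{i,j}\setminus S|\le O(\delta)|A_{i,j}|=O(\delta)\,q(q-1)/2$, which is $<(q-1)^2/4$ once $\delta$ is smaller than an absolute constant. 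Contradiction, so $S\subseteq A_{i,j}$.

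\textbf{The main obstacle.} The crux is the FKN-type theorem for $PSL(2,q)$ (the spectral gap, by contrast, should follow from Weil-type bounds, with the character-sum bookkeeping patterned on Sections 4--5 and crude estimates of Proposition~\ref{prop:15}-type sufficing). To prove it one must either adapt the combinatorial stability argument of \cite{Plaza} for $PGL(2,q)$ — which is obstructed by the high multiplicity of the $PSL(2,q)$-constituents noted in the introduction, and which I would circumvent exactly as in this paper, by working with the $PGL(2,q)$-action on the same sets and modules (where multiplicities are almost all one) and then restricting — or invoke a hypercontractive inequality on $PSL(2,q)$, which is itself a substantial input. Along the way the combinatorics will again produce Legendre and Soto-Andrade sums (Definitions~\ref{def1} and~\ref{def1_1}) and lean on the orthogonality in Lemma~\ref{lemma20}. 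Determining the optimal threshold $\delta$ — the analogue of the $1-1/e$ phenomenon for $S_n$ — is a further, presumably harder, question that I would not attempt here.
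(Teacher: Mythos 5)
This statement is left as an open conjecture in the paper (it appears in the Conclusions precisely because the authors do not prove it), so there is no proof of record to compare against; what matters is whether your argument closes the problem, and it does not. What you have written is a research programme in the style of Ellis \cite{Ellis5} and \cite{Plaza}, and its two load-bearing ingredients are asserted rather than proved. First, the spectral input: you need that the ratio bound for the derangement Cayley graph on $PSL(2,q)$ is tight with least eigenvalue $\tau=-(q-1)^2/4$ attained \emph{only} on the standard constituent, together with a uniform gap $|\mu-\tau|\ge c|\tau|$ for all other nontrivial eigenvalues. Here the eigenvalues are indexed by irreducible characters of $PSL(2,q)$ itself (including the two characters of degree $(q\pm1)/2$, which have no $PGL(2,q)$ analogue), so the bookkeeping of Sections 4--5 of this paper does not transfer verbatim, and ``crude square-root cancellation'' is a hope, not an estimate; this part is plausibly within reach via Weil-type bounds but it is not done. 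Second, and decisively, the FKN-type structure theorem for $\{0,1\}$-valued functions on $PSL(2,q)$ whose $L^2$-mass is almost entirely on $\mathbb C\mathbf 1\oplus(\tau\text{-eigenspace})$ is exactly the hard core of the stability problem: for $S_n$ and for $PGL(2,q)$ such statements required substantial separate arguments (hypercontractivity, or the delicate combinatorial analysis of \cite{Plaza}), and you explicitly decline to supply either. Since the entire passage from ``$\mathbf 1_S$ is $L^2$-close to the degree-$\le 1$ space'' to ``$S$ is close to a single $A_{i,j}$'' rests on that unproved theorem, the argument has a genuine gap and does not establish the conjecture.

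For what it is worth, the outer layers of your scheme are sound: granting the gap, the robust ratio bound does give $\|f_\perp\|_2^2=O(\delta)\|\mathbf 1_S\|_2^2$, and your final ``closeness to containment'' step is correct and clean --- for $g\notin A_{i,j}$ the number of $h\in A_{i,j}$ with $gh^{-1}$ a derangement is $\left|A_{j^{g^{-1}},i}\cap D\right|=(q-1)^2/4$ by Lemma \ref{lemma1}(1), which beats $O(\delta)\,q(q-1)/2$ once $\delta$ is a small absolute constant. So the proposal is a reasonable blueprint for attacking the conjecture, but as it stands it is an outline with its central lemma missing, not a proof.
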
 

\section*{Acknowledgment}
The authors would like to thank the reviewers for their helpful comments.

\end{document}